\newcommand\blfootnote[1]{%
	\begingroup
	\renewcommand\thefootnote{}\footnote{#1}%
	\addtocounter{footnote}{-1}%
	\endgroup
}
\numberwithin{equation}{section}
\crefname{enumi}{}{parts}
\numberwithin{equation}{section}
\newtheorem{theorem}{Theorem}[section]
\newtheorem{lemma}[theorem]{Lemma}
\newtheorem{proposition}[theorem]{Proposition}
\newtheorem{corollary}[theorem]{Corollary}
\newtheorem{definition}[theorem]{Definition}
\crefname{assumption}{Assumption}{Assumptions}
\newtheorem{remark}[theorem]{Remark}
\newcommand\numberthis{\addtocounter{equation}{1}\tag{\theequation}}
\newcommand{\ud}{\,\mathrm{d}}
\def\d{\,\mathrm{d}}
\newcommand{\1}{\mathds{1}}
\newcommand{\Id}{\operatorname{Id}}
\newcommand{\R}{\mathbb{R}}
\newcommand{\T}{\mathbb{T}}
\renewcommand{\S}{\mathbb{S}}
\newcommand{\lp}{\left(}
\newcommand{\rp}{\right)}
\let\mc=\mathcal
\DeclarePairedDelimiter{\abs}{\lvert}{\rvert}
\DeclarePairedDelimiter{\norm}{\lVert}{\rVert}
\let\oldabs\abs
\def\abs{\@ifstar{\oldabs}{\oldabs*}}
\let\oldnorm\norm
\def\norm{\@ifstar{\oldnorm}{\oldnorm*}}
\newcommand{\Feq}{F_{\mathrm{eq}}}
\newcommand{\MTm}{\mathcal{M}_{T,m_i}}
\newcommand{\snabla}{\nabla_{\mathbb{S}^2}}
\newcommand{\hznabla}{\nabla_{\hz}}
\newcommand{\rpartial}{\partial_{|z|}}
\newcommand{\mi}{m_i}
\newcommand{\mj}{m_j}
\newcommand{\Mij}{M_{ij}} 
\newcommand{\aij}{\alpha_{ij}}
\newcommand{\gradvi}{\nabla_{v_i}}
\newcommand{\gradvj}{\nabla_{v_j}}
\newcommand{\gradz}{\nabla_z}
\newcommand{\gradzs}{\nabla_{z^*}}
\newcommand{\nablaijminus}{\left(\frac{\gradvi}{\mi}-\frac{\gradvj}{\mj}\right)}
\newcommand{\nablaijplus}{\left(\gradvi + \gradvj\right)}
\newcommand{\nablavwminus}{\left(\nabla_v - \nabla_{v^*}\right)}
\newcommand{\Pperp}[1]{\Pi^{\perp}_{#1}}         
\newcommand{\Aij}{A(v_i-v_j)}              
\newcommand{\cAij}{c_{ij}A(v_i-v_j)}              
\newcommand{\Az}{A(z)}
\newcommand{\fij}{\left(f_i \otimes f_j\right)}
\newcommand{\Uij}{\log \fij}
\newcommand{\ofij}{\smash[b]{\underline{f}}_{ij}}
\newcommand{\ufij}{\overline{f}_{ij}}
\newcommand{\ff}{\left(f \otimes f\right)}
\newcommand{\off}{\smash[b]{\underline{f}}_{\otimes}}
\newcommand{\uff}{\overline{f}_{\otimes}}
\newcommand{\tF}{\widetilde{F}}
\newcommand{\tQ}{\widetilde{Q}}
\newcommand{\tI}{\widetilde{\mathcal{I}}}
\newcommand{\tIij}{\widetilde{\mathcal{I}}_{ij}}
\newcommand{\tf}{\overline{f}}
\newcommand{\tests}{\overline{g}}
\newcommand{\testf}{g}
\newcommand{\marg}[1]{\Pi_{#1}\tF}
\newcommand{\hz}{\hat{z}}
\newcommand{\hv}{\hat{v}}
\newcommand{\bbnabla}{%
    \,\nabla\mkern-12mu\nabla_{i,j}%
}
\newcommand{\bbnablavw}{%
    \,\nabla\mkern-12mu\nabla%
}
\newcommand{\notbbnabla}{%
  \mathrel{\ooalign{%
    $\nabla\mkern-12mu\nabla$\cr
    \hidewidth\raise0.2ex\hbox{$/$}\hidewidth}}_{i,j}\mkern-2.5mu%
}
\newcommand{\notbbnablavw}{%
  \mathrel{\ooalign{%
    $\nabla\mkern-12mu\nabla$\cr
    \hidewidth\raise0.2ex\hbox{$/$}\hidewidth}}%
}
\newcommand{\notsbbnabla}{%
  \mathrel{\ooalign{%
    $\nabla\mkern-12mu\nabla$\cr
    \hidewidth\raise0.2ex\hbox{$/$}\hidewidth}}_{i,j}^{\;*}%
}
\newcommand{\IFish}[1]{I\!\left(#1\right)}          
\newcommand{\IFishMulti}[1]{\mathcal{I}\!\left(#1\right)}     
\newcommand{\HRel}[2]{H\!\left(#1|#2\right)}                     
\newcommand{\JSph}[1]{J\!\left(#1\right)}                     
\author{Jonathan Junn\'e\footnote{Delft Institute of Applied Mathematics, Faculty of Electrical Engineering, Mathematics and Computer Science, Delft University of Technology, Mekelweg 4, 2628CD Delft, The Netherlands. \href{mailto:j.junne@tudelft.nl}{J.Junne@tudelft.nl} \& \href{mailto:H.Yoldas@tudelft.nl}{H.Yoldas@tudelft.nl}}
\and Raphael Winter\footnote{School of Mathematics, Cardiff University,  Abacws, Senghennydd Road, Cathays, Cardiff CF24 4AG, United Kingdom.  \href{mailto:WinterR6@cardiff.ac.uk}{WinterR6@cardiff.ac.uk}}
\and Havva Yolda\c{s}$^{\ast}$}
\title{On the existence of solutions to the multi-species Landau equation}
\begin{document}
\maketitle

\begin{abstract}
 We consider the spatially homogeneous Landau equation for multiple species with different masses. As in the single-species case, the singularity of the collision operator is determined by a parameter $\gamma \in [-3,1]$, where $\gamma = -3$ corresponds to Coulomb interactions. We prove that if $\gamma\geq -\sqrt{8}$ in the cross-interaction operators, then there exists a natural multi-species generalization of the Fisher information which is a Lyapunov functional for the multi-species Landau system. On the other hand, we give a counterexample showing that the Fisher information is in general no longer a Lyapunov functional below the threshold $(\gamma < - \sqrt{8})$ for the two-species system if one species has infinite mass. However, we are able to provide a new method to show global well-posedness, by constructing a different Lyapunov functional based on the spherical Fisher information. 

 \blfootnote{\emph{Keywords and phrases.} Kinetic theory, Collisional plasma, Multi-species Landau system, Fisher information, Coulomb potential.} 
\blfootnote{\emph{2020 Mathematics Subject Classification.} 35Q70, 82C40, 58J35.}
\end{abstract}

\tableofcontents

\section{Introduction}

We study the spatially homogeneous multi-species Landau equation, a kinetic model for the time evolution of a plasma consisting of several distinct particle species, such as ions and electrons. The homogeneous multi-species Landau system, satisfied by the probability densities $f_i=f_i(t,v)$ with $(t,v) \in (0,\infty)\times\R^3$ for $N$ species, is given by
\begin{equation} \label{eq:multi_Landau}
	\partial_t f_i = \sum_{j=1}^N Q_{ij}(f_i,f_j), 
	\qquad 1\leq i\leq N,
\end{equation}
where $Q_{ij}(f_i,f_j)$ is the Landau collision operator, derived by Landau in 1936 \cite{L58},
\begin{align*}
	Q_{ij}(f_i,f_j)(v_i)
  &= \frac{1}{\mi}\,\gradvi \cdot \left(
      \int_{\R^3} c_{ij} A(v_i-v_j)\,
        \left( \frac{\gradvi}{\mi}-\frac{\gradvj}{\mj} \right)
        \fij(v_i,v_j)\,\ud v_j
    \right),
\end{align*}
with
\begin{align} \label{eq:APiC}
    A(z) \coloneqq \, \abs{z}^{2+\gamma}\Pperp{z}, \quad \Pperp{z} \coloneqq \Id - \frac{z\otimes z}{\abs{z}^2}, \quad c_{ij} \coloneqq \frac{\abs{\log{\Lambda_c}} q_i^2 q_j^2 n_i n_j}{8 \pi \varepsilon_0^2},
\end{align}
where $|\log \Lambda_c|$ is the Coulomb logarithm, and $\varepsilon_0$ is the vacuum permittivity. The constants $n_i$ are the number densities of each species per unit volume, so we can assume $f_i$ to be probability densities. The constant $\gamma \in [-3,1]$ determines the singularity of the interaction kernel, the physically most relevant case describing the interaction of charged particles via Coulomb potentials corresponds to $\gamma=-3$. The Landau equation can be obtained in the \emph{grazing collisions limit} from the Boltzmann equation, see e.g.,~\cite{AV04, De92, Vi98, BW23}.

The breakthrough result of Guillen and Silvestre~\cite{GS25} establishes the global existence of solutions for the Landau-Coulomb equation, when $f$ solves \eqref{eq:multi_Landau} for only one species. They achieve this by showing that the Fisher information
\begin{align}
    \IFish{f} = \int_{\R^3} \abs{\nabla \log f}^2 f
\end{align}
is non-increasing along the solution of the Landau equation. On the other hand, for the multi-species Landau equation only a couple of mathematically rigorous results are available. 

As the multi-species Landau equation arises from (grazing) elastic collisions it conserves, at least formally, the number of particles for all species, as well as the total momentum $M$ and total energy $E$. Therefore, we have the following conserved quantities
\begin{align} \label{eq:total_momentum_energy}
    M &:= \sum_{i=1}^N \int_{\R^3} \mi v\, f_i(v) \d v, \quad E := \frac12 \sum_{i=1}^N \int_{\R^3} \mi \abs{v}^{2} f_i(v) \d v.
\end{align}
Moreover, the Boltzmann entropy $\mc H $, given by
\begin{align}
    \mathcal{H}(f_1,\ldots, f_N) &:= \sum_{i=1}^N \int_{\R^3} f_i \log f_i,
\end{align} 
is formally non-increasing in time. 

In this paper, we tackle the problem of global well-posedness for the spatially homogeneous multi-species Landau equation. To this end, we first derive a natural generalization of the Fisher information in the multi-species setting. Since the global equilibrium of the multi-species system is given by the Gibbs state 
\begin{align} \label{Gibbs}
     \Feq (v_1, \ldots, v_N) = \prod_{i=1}^N \MTm(v_i),
\end{align}
where $T>0$ is the temperature and $\MTm$ is the Maxwellian velocity distribution for particles of mass $m_i$, i.e. 
\begin{align} \label{def:Maxwellian}
    \MTm(v) = \left( \frac{m_i}{2\pi T}\right)^{3/2} 
              \exp\left(-\frac{m_i}{2T}\abs{v}^2\right),
\end{align}
it is natural to seek a multi-species Fisher information $\mathcal I$ of the form
\begin{align*}
    \IFishMulti{F_N;{\bf a}} 
    = \sum_{i=1}^N a_i \int_{\R^3} \abs{\nabla \log f_i}^2 f_i,  
\end{align*}
where $F_N = \lp f_i\rp_{i=1}^N$ solution to \eqref{eq:multi_Landau}, and ${\bf a} =(a_1,\dots, a_N)$ positive constants $a_i>0$ to be chosen. We can only expect monotonicity of this functional if the equilibrium~\eqref{Gibbs} is a minimizer among all configurations with fixed total kinetic energy $E$ defined in~\eqref{eq:total_momentum_energy}. Testing this condition on factorized functions with the same kinetic energy shows that, up to a multiplicative constant, the unique choice is $a_i=\mi^{-1}$. We therefore define
\begin{align} \label{def:nFisher}
    \IFishMulti{F_N} 
    \coloneqq \sum_{i=1}^N \frac{1}{\mi} \int_{\R^3} \abs{\nabla \log f_i}^2 f_i.
\end{align}

This functional is indeed a Lyapunov functional for the homogeneous multi-species Landau equation provided that $\gamma \in [-\sqrt{8}, -2]$ which is our first main result \cref{thm:Global}. We recast the cross-interaction operators in a new set of variables~\eqref{eq:new_variables} and differential operators~\eqref{gradnotation}. After this transformation, the operators can be treated with similar to~\cite{GS25}, and the constant in the log-Sobolev inequality without additional symmetry is responsible for the threshold value $-\sqrt{8}$. We can even prove that for $\gamma \in [-3, -\sqrt{8})$ the Fisher information $\mc I (F_N)$ is in general no longer a Lyapunov functional, by giving an explicit counterexample in~\cref{thm:FisherIncrease}. 

The first two theorems pose the question whether the multi-species Landau system is indeed globally well-posed below the threshold value  $-\sqrt{8}$. This is the main objective of our analysis. We are able to answer this question positively in the case of two-species Landau system where one species has infinite mass, i.e. the collisional dynamics of electrons and ions. While our counterexample shows that the time derivative of the Fisher information along this dynamics is no longer under control, we construct a new Lyapunov functional $\Lambda (f)$, defined in~\eqref{eq:new_Lyapunov}, by combining a weighted spherical Fisher information, the classical Fisher information and the Boltzmann entropy. With this new functional we establish global well-posedness for $\gamma \in [-3, -\sqrt{8})$ in~\cref{thm:Lambda dissipation}.

\subsection{Summary of previous results} \label{sec:summary}

In this section, we summarize the theory of the Landau equation with a particular focus on the results after~\cite{GS25}. We refer the reader to~\cite{GS25} for a detailed review and historical remarks. The global well-posedness theory for the spatially homogeneous Landau equation is well-established for hard potentials and Maxwellian molecules, i.e., $\gamma \geq 0$, see e.g., \cite{Vi98-2} for $\gamma =0$ and \cite{DV00, DV00-2} for  $\gamma >0$. For moderately soft potentials, $\gamma \in (-2, 0]$, regularity estimates developed in \cite{GG18, Si17, Wu14} imply global smooth solutions. For the very soft potentials, i.e., $\gamma \in [-3,-2)$ including the case of Coulomb interactions, this problem was open until the recent breakthrough result by Guillen and Silveste \cite{GS25}. They prove that the Fisher information is monotone decreasing for the spatially homogeneous Landau equation with Coulomb interactions, which gives the existence of global smooth solutions. Since then, this new information-theoretical approach has been used to address other open problems in kinetic theory. We refer the reader to the recent notes by Villani \cite{Vi25} on this topic. In \cite{ISV25}, Imbert, Silvestre and Villani generalized this method to show the existence of global smooth solutions to the space-homogeneous Boltzmann equation in the regime of very soft potentials. The Fisher information approach has been generalized to initial data without finite Fisher information, see~\cite{CGG25, DGGL24, GGL25, Ji24-2}. Let us briefly discuss the Lenard-Balescu equation which is considered as a more accurate model than Landau equation for weakly coupled systems with long-range forces (especially Coulomb plasmas). The presence of collective effects leads to an additional non-local nonlinearity in the collision operator and currently no Fisher information based theory is known. The state of the art regarding the spatially homogeneous Lenard-Balescu equation is due to~\cite{BD25, DW23, St07}. 

The spatially in-homogenous Landau equation with Coulomb interactions still remains open for general initial data. However, it is possible to obtain rigorous results when the collision operator is mollified in space. In~\cite{GGPTZ25}, the authors study a fuzzy version of the spatially in-homogeneous Landau equation where the collisions are delocalized via a spatially dependent kernel $\kappa (x-x_*)$. They prove the global well-posedness of the fuzzy Landau equation for moderately soft interactions where $\gamma \in (-2,0]$ and a positive spatial kernel $\kappa (x) \sim \langle x\rangle^{-\lambda},  \lambda >0 $ when the initial data is sufficiently smooth. For $\kappa \equiv 1$, and $\gamma \in [-3,1]$, they also prove that the spatial Fisher information decays monotonically and the full Fisher information remains uniformly bounded in finite time intervals. We also refer to recent article series \cite{DH25-3, DH25, DH25-2} for more details on the fuzzy Landau equation.  

When it comes to the multi-species Landau equation, mathematical results are rather scarce. Classical references for the derivation of \eqref{eq:multi_Landau} are \cite[Chapter 4]{LP83} and \cite[Chapter 6]{Sch91}. This problem is closely related to the study of multi-species Boltzmann equation; thus, the existing results use similar approaches. Let us start by summarizing some landmark results in this case. In \cite{SY10}, the authors study the Boltzmann equation for a mixture of two gases in one-spatial dimension, and prove sharp pointwise-in-$(t,x)$ asymptotics where one species is near vacuum and the other one is near Maxwellian equilibrium. One of the first spectral gap results for the spatially in-homogeneous multi-species Boltzmann system on $(x,v) \in \T^3 \times \R^3$ covering hard potentials and Maxwellian molecules, i.e., $\gamma  \in [0,1]$ with Grad's cutoff assumption is \cite{DJMZ16}. This work uses a multi-species adaptation of the quantitative hypocoercivity techniques developed in \cite{MN06}. However, \cite{DJMZ16} does not allow species to have different masses in the mixture. In \cite{BD16}, the authors develop a perturbative Cauchy theory for the multi-species Boltzmann equation posed on $(x,v) \in \T^3 \times \R^3$ in the weighted space $L_v^1L_x^\infty $ with a polynomial weight. They prove the existence of a spectral gap for the linear multi-species Boltzmann operator allowing different masses which induce a loss of symmetry and the standard methods developed for the mono-species case are not immediately available. They study the perturbed linear equation by means of $L^1-L^\infty$ theory due to \cite{Gu10}. 

To the best of our knowledge, there are only two works on the Cauchy problem of the multi-species Landau system. In \cite{GZ17}, the authors study the system of the spatially in-homogeneous Landau system defined in the phase space $(x,p) \in \T^3 \times \R^3 $ with moderately soft potentials, i.e., $\gamma \in [-2,1]$. Using a similar strategy as in \cite{DJMZ16}, they provide explicit spectral gap and hypocoercivity estimates for a linearized multi-species Landau system where all species are assumed to be close to equilibrium. In \cite{LWW18}, the authors study the Landau system on $(x,p) \in \R^3 \times \R^3$ for $\gamma \in [-2,1]$ for $2$ species where one species starts near vacuum and the other one starts near a Maxwellian equilibrium state. Using a similar approach in the case of the multi-species Boltzmann system \cite{SY10}, they show that the solutions to the linearized system become instantaneously smooth both in $x$ and $p$ without requiring any smoothness assumption on the initial datum. On the numerics side, \cite{CHF24} develops a deterministic, structure-preserving particle method for the spatially homogeneous, multi-species Landau equation by regularizing the collision operator so that each species distribution can be approximated as a sum of Dirac masses whose locations evolve according to an ODE system. 

\subsection{Main results} \label{sec:main-results}

\begin{theorem}[Global well-posedness] \label{thm:Global}
    Let $N \in \mathbb N$ be the number of species and  $m_i>0$, $1\leq i\leq N$ their respective particle masses. Assume that the interaction is soft with $-2 >\gamma\geq -\sqrt{8}$. Further assume that the initial distributions satisfy  $f^\circ_i \in  L^1_{2\ell} \cap L^2_\ell \cap L \log L$ for some  $\ell\geq 7$ and are probability densities.   Then there exists a global-in-time strong solution $F_N=(f_i)_{i=1}^N$ which solves \eqref{eq:multi_Landau}, and
    \begin{align*}
        f_i \in C^\infty((0,\infty)\times \R^3), \quad \text{for } 1\leq i\leq N.        
    \end{align*}
    Moreover, for any $t>0$, the generalized Fisher information $\IFishMulti{F_N}$ defined in~\eqref{def:nFisher} is finite and non-increasing in time on $(0,\infty)$. 
\end{theorem}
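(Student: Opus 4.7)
The plan is to follow the two-step route that has now become standard after Guillen–Silvestre: first establish the a priori monotonicity $\frac{d}{dt}\mathcal{I}(F_N)\leq 0$ on smooth solutions of a suitable regularization of \eqref{eq:multi_Landau}, and then use this coercive control to implement a compactness argument producing a global smooth solution.

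For the a priori step, I would differentiate $\mathcal{I}(F_N)$ in time and split the resulting expression into diagonal terms ($i=j$), which are of single-species Landau type, and off-diagonal terms ($i\neq j$), for which one passes to the variables $(\zij,\szij)$ and the paired gradient operators $\bbnabla$, $\notbbnabla$ introduced earlier in the paper. The diagonal terms are non-positive by the Guillen–Silvestre identity applied to each $f_i$, the key point being that the weight $m_i^{-1}$ in \eqref{def:nFisher} is compatible with the single-species dissipation structure up to a trivial rescaling. The off-diagonal terms, after the change of variables, reduce to an integral over the joint tensor product $f_i\otimes f_j$ of a quadratic form in $\bbnabla \log(f_i\otimes f_j)$ with coefficients built from the matrix $A(v_i-v_j)$ and the mass ratios. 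I would then rewrite this quadratic form, exactly as in~\cite{GS25}, as a sum of squares plus a commutator/trace term whose sign is controlled by a log-Sobolev–type inequality on $\R^6$ without additional symmetry. The numerical constant in that log-Sobolev inequality is what forces the threshold $\gamma\geq -\sqrt{8}$; this sign analysis is where I expect the main technical difficulty to lie, since one has to keep track of the asymmetric prefactors coming from $m_i\neq m_j$ and verify that the lack of $i\leftrightarrow j$ symmetry does not break the coercivity.

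Granting the dissipation identity, I would next set up an approximation scheme: replace $c_{ij}A(v_i-v_j)$ by $c_{ij}A_\varepsilon(v_i-v_j)$ with a regularized kernel (smooth, bounded, non-degenerate) so that \eqref{eq:multi_Landau} becomes a globally well-posed system with smooth solutions $F_N^\varepsilon$; this can be done coupling the Boltzmann–Grad-type weak solution theory with a fixed-point argument in a weighted $L^1\cap L^2$ space, initialized from $f^\circ_i\in L^1_{2\ell}\cap L^2_\ell\cap L\log L$. The conservation of mass, momentum, and energy \eqref{eq:total_momentum_energy} together with the monotonicity of $\mathcal{H}$ and the regularized monotonicity of $\mathcal{I}(F_N^\varepsilon)$ (which is stable under the regularization since the sign analysis above only uses the matrix $A_\varepsilon\geq 0$ and pointwise identities) yield uniform-in-$\varepsilon$ estimates of the form
\begin{equation*}
    \sup_{t\geq 0}\Bigl(\mathcal{H}(F_N^\varepsilon(t))+\sum_{i=1}^N\int \langle v\rangle^{2}f_i^\varepsilon(t)\Bigr)+\sup_{t\geq \tau}\mathcal{I}(F_N^\varepsilon(t))<\infty
\end{equation*}
for every $\tau>0$, and the moment propagation in $L^1_{2\ell}$ is obtained by the usual Povzner-type argument adapted to the multi-species case.

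Upgrading to smoothness uses the finiteness of $\mathcal{I}(F_N)$ species-wise: since $I(f_i)<\infty$ gives a quantitative $L^3$-bound (Sobolev embedding for $\sqrt{f_i}\in H^1$), the diffusion matrix $\bar a_{ij}(v)=\int c_{ij}A(v-v_j)f_j(v_j)\,dv_j$ inherits the uniform ellipticity and $L^\infty$ upper bounds of~\cite{GS25, Si17}, uniformly in $\varepsilon$. Standard parabolic De Giorgi/Schauder bootstrap then yields $C^\infty_{t,v}((0,\infty)\times\R^3)$ estimates on each $f_i^\varepsilon$ locally uniform in $\varepsilon$. Passing to the limit $\varepsilon\to 0$ via Arzelà–Ascoli along the uniform estimates produces a global strong solution $F_N$; lower semicontinuity of the Fisher information transfers the monotonicity from the approximations to $F_N$. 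The main obstacle is clearly the dissipation computation: extracting the correct sign of the off-diagonal cross-terms under the constraint $\gamma\geq -\sqrt{8}$ without assuming mass symmetry, while everything after that is a (nontrivial but) routine adaptation of the single-species Guillen–Silvestre program.
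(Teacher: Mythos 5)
Your proposal is correct in outline and tracks the paper's argument closely, but two points deserve sharpening. First, the paper does not differentiate $\mathcal{I}(F_N)$ directly and split indices; instead it lifts to a $2N$-variable tensorized system $\tF$ and applies Carlen's superadditivity inequality (\cref{lem:Fisher Carlen}) to obtain the one-sided comparison $\frac{\d}{\d t}\mathcal{I}(F_N)\big|_{t=0}\le \tfrac12\frac{\d}{\d t}\tI(\tF)\big|_{t=0}$. Your ``direct'' splitting can be made to work because, for the cross pair $(i,j)$ with $i\ne j$, the sum $\tfrac{1}{m_i}\langle I'(f_i),Q_{ij}\rangle+\tfrac{1}{m_j}\langle I'(f_j),Q_{ji}\rangle$ does equal $\langle\tIij'(f_i\otimes f_j),\tQ_{ij}(f_i\otimes f_j)\rangle$ — but this identity is not stated in your proposal, and it is precisely what the lifting argument packages cleanly (including the factor $\tfrac12$ needed for the diagonal pairs $j=N+i$). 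Second, and more substantively, the coercivity is \emph{not} governed by a log-Sobolev inequality on $\R^6$. After passing to the relative velocity $z$ and writing $z=r\omega$, the square completes in spherical variables and what is needed is the log-Sobolev inequality on $\S^2$ with optimal constant $\Lambda_3=2$ in the absence of the $z\mapsto -z$ symmetry (\cref{def:optimal_constants}, \cref{prop:coercivity}); the threshold $\gamma\ge-\sqrt{8}$ is exactly $\gamma^2\le 4\Lambda_3$. This is where the cross-terms lose the factor of roughly $5.5/2$ relative to the self-interaction case, and it is why the multi-species threshold is strictly more restrictive than Coulomb. Finally, the paper's regularity upgrade uses weighted $H^1$ and $H^2$ energy estimates plus the Toscani-type bound $I(f)\lesssim\norm{f}_{H^2_k}$ and an interpolation closing the $L^2_l$ continuation criterion, rather than a De Giorgi/Schauder bootstrap with $\varepsilon$-regularized kernels; your route is plausible but not the one taken here, and you would still need the moment propagation of \cref{lem:Moments} (with its nontrivial treatment of the asymmetric cross-terms) to control the interpolation weights.
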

The proof of this theorem is the content of~\cref{subsec:Thm1Proof}.

The theorem covers a large range of soft potentials, down to the threshold $\gamma_* = -\sqrt{8}$. This naturally raises the question whether the monotonicity of the Fisher information persists for $\gamma<-\sqrt{8}$. We answer this question negatively in the limiting case where one species has infinite mass. This equation models, in particular, the collisional dynamics of electrons interacting with ions, since the mass ratio is about 1836 already between protons and electrons. The resulting equation can be found in~\cite[Equation 6.4.11]{Sch91} and reads 
\begin{align} \label{eq:InfiniteGamma}
    \partial_t f = Q_\gamma(f,f) + c_{ei} Lf,
\end{align}
with the choice $\gamma= -3$ and $c_{ei}>0$ in the general formula
\begin{align*}
    Q_\gamma(f,f)(v) &\coloneqq \nabla_v \cdot \int_{\R^3} A(v-v^*) \nablavwminus \ff (v,v^*) \ud{v^*}, \\ 
    Lf(v) &\coloneqq \nabla_v \cdot \left(|v|^{2+\gamma} \Pperp{v} \nabla_v f(v)\right).
\end{align*}
In other words, the interaction with ions induces a diffusion on spheres in velocity space. We prove that the Fisher information is a Lyapunov function for this equation if the initial datum $f^\circ$ is \emph{even}, but is \emph{not} monotone decreasing in general.

\begin{theorem}[Non-monotonicity of the Fisher information below the critical threshold] \label{thm:FisherIncrease}
    Let $-3 \leq \gamma <- \sqrt{8}$. Then there exists a probability density $f^\circ \in \mathcal{S}(\R^3)$ such that the local-in-time solution $f$ with initial data $f^\circ$ to~\eqref{eq:InfiniteGamma} satisfies
    \begin{align*}
        \frac{\d}{\d t} \IFish{f}\big|_{t=0} > 0.
    \end{align*}

    On the other hand, if $f^\circ$ is \emph{even}, i.e. $f^\circ (v)=f^\circ (-v)$, then the Fisher information $I(f)$ is a Lyapunov functional for~\eqref{eq:InfiniteGamma} for any $-3\leq \gamma \leq -2$. 
\end{theorem}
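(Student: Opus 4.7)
The proof hinges on the standard dissipation identity
\begin{equation*}
    \frac{d}{dt}\IFish{f} = -\int_{\R^3} \bigl(2\Delta \log f + |\nabla \log f|^2\bigr) \bigl(Q_\gamma(f,f) + c_{ei} Lf\bigr) \d v \eqqcolon D_Q(f) + c_{ei}\, D_L(f).
\end{equation*}
The Guillen--Silvestre theorem \cite{GS25}, applied to the mono-species Landau operator $Q_\gamma$, yields $D_Q(f) \leq 0$ for every smooth positive $f$ and every $\gamma \in [-3, -2]$, so the entire question reduces to the sign of $D_L(f)$. In polar coordinates $v = r\omega$ the spherical diffusion satisfies $L f = r^\gamma \Delta_{\S^2} f$, where $\Delta_{\S^2}$ denotes the Laplace--Beltrami operator on the unit sphere acting on $\omega$. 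Decomposing $2\Delta \log f + |\nabla \log f|^2$ into its radial and angular components and integrating by parts on each sphere of fixed radius $r$ gives a splitting
\begin{equation*}
    D_L(f) = D_L^{\mathrm{sph}}(f) + D_L^{\mathrm{coup}}(f),
\end{equation*}
where $D_L^{\mathrm{sph}}(f)$ is a purely angular quadratic form, non-positive by the Bakry--\'Emery / Bochner--Weitzenb\"ock identity on $\S^2$ (whose Ricci tensor equals the metric), while $D_L^{\mathrm{coup}}(f)$ is a radial--angular coupling with weight $r^{\gamma}$ that contains cross products of $\partial_r \log f$ with $\snabla \log f$. The coupling is absorbed into $D_L^{\mathrm{sph}}$ by a Cauchy--Schwarz estimate whose success is governed by the lowest non-trivial eigenvalue of $-\Delta_{\S^2}$: one has $\lambda = 2$ on generic functions (the $\ell = 1$ modes) but $\lambda = 6$ on even functions (where $\ell = 1$ vanishes and $\ell = 2$ dominates).

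For \emph{even} initial data, both $Q_\gamma(f,f)$ and $Lf$ preserve parity, so $f(t,\cdot)$ is even for all $t \geq 0$ and only $\ell \geq 2$ spherical-harmonic modes contribute to $\snabla \log f$. The improved constant $\lambda = 6$ is then available and the Cauchy--Schwarz estimate closes for the full range $\gamma \in [-3,-2]$, giving $D_L(f) \leq 0$ and hence the Lyapunov property $\frac{d}{dt}\IFish{f}\leq 0$.

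To produce the counterexample we populate precisely the $\ell = 1$ direction that is removed by evenness. Take
\begin{equation*}
    f^\circ(v) = \MTm(v)\bigl(1 + \eps\, \chi(|v|)\, v_1\bigr),
\end{equation*}
with $\MTm$ the Maxwellian from \eqref{def:Maxwellian}, $\chi$ a smooth radial cut-off supported in a thin shell $\{r_0 \leq |v| \leq 2 r_0\}$, and $\eps > 0$ small enough to ensure positivity; the correction has zero integral (it is odd in $v$), so $f^\circ \in \mathcal S(\R^3)$ is a probability density. An $\eps$-expansion yields
\begin{equation*}
    \frac{d}{dt}\IFish{f}\Big|_{t=0} = \eps^2 \bigl(c_{ei}\, C_L(\chi,\gamma) + C_Q(\chi,\gamma)\bigr) + O(\eps^3),
\end{equation*}
with $C_L(\chi,\gamma) > 0$ the unabsorbed part of the coupling (the sharper constant $\lambda = 6$ is now unavailable) and $C_Q(\chi,\gamma) \leq 0$ the Landau dissipation on the same perturbation. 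A scaling analysis in $r_0$ shows that the ratio $-C_Q/C_L$ is bounded above by an explicit function of $\gamma$ that crosses $c_{ei}$ exactly at $\gamma = -\sqrt 8$, so for any $\gamma < -\sqrt 8$ the bracket is strictly positive and $\IFish{f}$ increases at $t = 0$.

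The principal obstacle is the sharp quantitative bookkeeping in $D_L^{\mathrm{coup}}$ against $D_L^{\mathrm{sph}}$ and $D_Q$ restricted to $\ell = 1$ test perturbations, which is what pins down the threshold $\gamma_* = -\sqrt 8$. Once this sharp comparison is available, the parity argument cleanly switches on the improved Poincar\'e constant $\lambda = 6$ and the Lyapunov statement follows for the full physical range $\gamma \in [-3,-2]$.
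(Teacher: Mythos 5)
Your overall strategy---reduce to the sign of the $L$-contribution via the Guillen--Silvestre sign of the Landau part, use the improved spherical constant for even data, and populate the $\ell=1$ mode for the counterexample---is the right starting point and matches the paper in spirit. However, the perturbative ansatz $f^\circ = \mathcal{M}_{T,m}\bigl(1+\eps\,\chi(|v|)v_1\bigr)$ cannot work in the stated range, and this is not a technicality but a structural obstruction. Writing $g=\phi(r)Y_1(\omega)$, the $O(\eps^2)$ contribution of $\langle I', Lf\rangle$ is, after substituting $\psi(r)=r^{\gamma/2}\phi(r)$ to normalize away the weights,
\begin{align*}
-2\cdot\frac{8\pi}{3}\left[\int_0^\infty r^2(\psi')^2\,\d r \;-\; \frac{\gamma^2-8}{4}\int_0^\infty \psi^2\,\d r\right] \;+\; \text{(Maxwellian correction)},
\end{align*}
and since $\psi$ must be compactly supported on $(0,\infty)$, the Hardy inequality $\inf_\psi \int r^2(\psi')^2 / \int\psi^2 = \tfrac14$ forces $\gamma^2>9$, i.e.\ $\gamma<-3$, strictly outside $[-3,-\sqrt8)$. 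The paper avoids this precisely by \emph{not} perturbing linearly: with $f^\circ_{R,1}=\exp\bigl(R|v|^{|\gamma|/2}\hat v\cdot e_1\bigr)\chi(v/r)$ the cutoff $\chi$ is multiplicative in $f$ and radial, hence purely additive and radial in $\log f$, so it contributes nothing to the angular derivatives of $\log f$; the square $\bigl|\Pi^\perp_v\nabla(\hat v\cdot\nabla\log f)+\tfrac{\gamma}{2|v|}\Pi^\perp_v\nabla\log f\bigr|^2$ vanishes identically, and the threshold $\gamma^2>8$ then comes cleanly out of the integral $\int_{\S^2}\bigl(2(\hat v\cdot e_1)^2-\tfrac{\gamma^2-4}{4}|\Pi^\perp_{\hat v}e_1|^2\bigr)$. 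To dominate the Landau contribution one then needs a second scaling parameter: amplitude $R\to\infty$ and support radius $r=R^{2/\gamma}/|\log(2+R)|\to 0$, chosen so that $\langle I',Lf^\circ_R\rangle\gtrsim R^2 r$ while $|\langle I',Q(f^\circ_R,f^\circ_R)\rangle|\lesssim 1+R^2 r^{|\gamma|-1}$. Your single-parameter $\eps$-expansion provides no such separation.

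Two smaller imprecisions: the threshold $\gamma=-\sqrt8$ is set by the sign of the $L$-part alone (via the log-Sobolev constant $\Lambda_3=2$), not by the ratio $-C_Q/C_L$ ``crossing $c_{ei}$''; the coupling constant $c_{ei}$ plays no role in the threshold because the Landau contribution can be made subdominant by scaling. And for the even case, the relevant constant is the nonlinear log-Sobolev constant $\Lambda_3^{\mathrm{sym}}\geq 5.5$ (from the $\Gamma_2\geq\Lambda|\nabla_{\S^2}\log\psi|^2$ inequality for symmetric $\psi$), not the linear Poincar\'e eigenvalue $\lambda_2=6$; these coincide only in the perturbative limit, which---as above---is exactly the regime you cannot stay in.
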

The proof of this theorem is the content of~\cref{sec:nonMono}.

This theorem shows that the Fisher information is no longer a Lyapunov functional. Moreover, the proof reveals that the time derivative of the Fisher information cannot be controlled in terms of $\IFish{f}$ alone, which makes any direct growth estimate for $\IFish{f}$ inaccessible. This prompts the question whether~\eqref{eq:InfiniteGamma} is in fact globally well-posed. We answer this positively even for Coulomb interactions between electrons and ions. We consider the equation 
\begin{align} \label{eq:infiniteCoulomb}
    \partial_t f = Q_\gamma(f,f)+ c_{ei} \nabla \cdot \lp \frac{\Pperp{v}}{\abs{v}}\, \nabla f \rp,
\end{align}
where $Q_\gamma(f,f)$ is given by the single-species Landau operator with coefficient $\gamma\in [-3,2]$ in~\eqref{eq:APiC}.

To achieve this, we introduce, in addition to the standard Fisher information, a weighted spherical Fisher information $\JSph{f}$ defined by
\begin{align*}
    \JSph{f} \coloneqq \int_{\R^3} \frac{\abs{\Pperp{v} \nabla \log f(v)}^2}{\abs{v}}\, f(v)\,\ud v,
\end{align*}
together with the Boltzmann relative entropy $\HRel{f}{\mathcal{M}_{{T^\circ}, 1}}$ with respect to the Maxwellian $\mathcal{M}_{{T^\circ}, 1}$ \eqref{def:Maxwellian} with temperature $T^\circ$ corresponding to the conserved free energy, given by
\begin{align*}
    \HRel{f}{\mathcal{M}_{{T^\circ}, 1}} \coloneqq H(f) - \int_{\R^3} f \log \mathcal{M}_{{T^\circ}, 1} \coloneqq \int_{\R^3} f \log f - \int_{\R^3} f \log \mathcal{M}_{{T^\circ}, 1}.
\end{align*}
Here, $T^\circ$ is chosen such that
\begin{align*}
    \int_{\R^3} \frac12 |v|^2\mathcal{M}_{{T^\circ}, 1}(v)\, \d v  = \int_{\R^3} \frac12 |v|^2 f^\circ(v)\, \d v = E.
\end{align*}
We are now in the position to state the main result.

\begin{theorem}[New Lyapunov functional for the infinite-mass case] \label{thm:Lambda dissipation}
Let $-3 \leq   \gamma < -\sqrt{8}$ and $c_{ei}>0$ and  $f$ be the (local) strong solution to~\eqref{eq:infiniteCoulomb} with initial datum given by a probability density $f^\circ$ satisfying $f^\circ \in L^1_{2\ell} \cap L^2_\ell \cap L \log L$ for some $\ell\geq 7$ and  $\JSph{f^\circ } < \infty$.
In the case of $\gamma = -3$, further assume that $c_{ei}>0$ and the initial datum $f^\circ$ satisfy
    \begin{align} \label{eq:Coulombcondition}
        \lp 2 + \frac{J(f^\circ)}{I^\frac32 (f^\circ)} +\HRel{f^\circ }{\mathcal{M}_{{T^\circ}, 1}} \rp \leq \frac{c_{ei}}{K},
    \end{align}
    where $K>0$ is a sufficiently large constant. 
Then there exist constants $a,R > 0$ such that the non-negative functional
    \begin{align} \label{eq:new_Lyapunov}
        \Lambda(f) = \IFish{f} + a\,\JSph{f} + R\,\HRel{f}{\mathcal{M}_{{T^\circ}, 1}}
    \end{align}
    is non-increasing in time. In particular, the Fisher information $\IFish{f}$ remains globally bounded by a constant depending only on the initial datum $f^\circ $, and the solution exists globally-in-time.
\end{theorem}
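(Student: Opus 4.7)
\textbf{The plan} is to split the time derivative
\[
\frac{\ud}{\ud t}\Lambda(f) \;=\; \mathcal{D}_Q(f) \;+\; c_{ei}\,\mathcal{D}_L(f),
\]
according to the single-species Landau contribution from $Q_\gamma(f,f)$ and the ion-diffusion contribution from $Lf = \nabla \cdot(\Pperp{v}|v|^{-1}\nabla f)$, and then to choose the constants $a,R>0$ sufficiently large that both blocks become non-positive.

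\textbf{Ion-diffusion part.} Since $L$ annihilates every radial function — in particular the Maxwellian $\mathcal{M}_{T^\circ,1}$ — and preserves the radial coordinate $|v|$, the standard entropy identity for $L$ yields
\[
\frac{\ud}{\ud t}\HRel{f}{\mathcal{M}_{T^\circ,1}}\Big|_L \;=\; -\JSph{f},
\]
which provides the ``good'' reservoir of dissipation. The derivative $\frac{\ud}{\ud t}\IFish{f}\big|_L$ is sign-indefinite — \cref{thm:FisherIncrease} furnishes a datum for which it is strictly positive — but a direct calculation expanding the $\Pperp{v}/|v|$-projection bounds it by a constant multiple of integrands of the same order as those dissipated by $\frac{\ud}{\ud t}\JSph{f}\big|_L$, which is itself a strongly negative second-order spherical Fisher-type functional modulo lower-order terms. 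Consequently, for $a,R$ large enough, $c_{ei}\,\mathcal{D}_L(f)$ is non-positive with a negative margin of order $c_{ei}\,R\,\JSph{f}$.

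\textbf{Landau part.} By the Guillen-Silvestre monotonicity applied to the single-species operator $Q_\gamma$, we have $\frac{\ud}{\ud t}\IFish{f}\big|_{Q_\gamma} \leq 0$ with a quantitative negative dissipation, and the classical $H$-theorem gives $\frac{\ud}{\ud t}\HRel{f}{\mathcal{M}_{T^\circ,1}}\big|_{Q_\gamma} \leq 0$. The remaining and problematic contribution is $\frac{\ud}{\ud t}\JSph{f}\big|_{Q_\gamma}$, which is not signed. For $\gamma\in(-3,-\sqrt{8})$ the singular factor $|v-v^*|^{2+\gamma}$ is still integrable enough that, after the $\gradvi/\mi-\gradvj/\mj$ decomposition and integration by parts, this derivative is bounded by an interpolation of $\IFish{f}$, $\HRel{f}{\mathcal{M}_{T^\circ,1}}$ and the kinetic energy $E$; the weight $|v|^{-1}$ is absorbed by a Hardy-type estimate using the $L^1_{2\ell}\cap L^2_\ell$ control on $f^\circ$. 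The bound is then dominated by the reservoir $c_{ei}\,R\,\JSph{f}$ produced above, provided $R/a$ is taken sufficiently large. In the Coulomb case $\gamma=-3$ the interpolation barely closes and produces a loss of the schematic form $\JSph{f}\cdot\big(1 + \JSph{f}/\IFish{f}^{3/2} + \HRel{f}{\mathcal{M}_{T^\circ,1}}\big)$; to absorb this into $c_{ei}\,R\,\JSph{f}$ one needs the pre-factor to be bounded by $c_{ei}/K$, which is exactly the content of hypothesis~\eqref{eq:Coulombcondition}, and a continuity/bootstrap argument propagates the inequality for all $t>0$.

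Once $\Lambda(f(t))$ is shown non-increasing, $\IFish{f(t)}$ remains bounded by $\Lambda(f^\circ)$ for all $t$; combined with the moment and integrability hypotheses on $f^\circ$, this is precisely the regularity input driving the continuation scheme used in the proof of \cref{thm:Global} (see \cref{subsec:Thm1Proof}) to upgrade the local strong solution to a global one. The \emph{main obstacle} will be the control of $\frac{\ud}{\ud t}\JSph{f}\big|_{Q_\gamma}$: because $\JSph{f}$ is not a natural Lyapunov quantity for $Q_\gamma$ — its $|v|^{-1}$ weight and $\Pperp{v}$ projection do not commute with the translation-invariant kernel $A(v-v^*)$ — the relevant integrations by parts generate many commutator terms that must be bounded in tandem, and tightening these estimates so that the Coulomb case closes under exactly \eqref{eq:Coulombcondition} is the central analytic difficulty.
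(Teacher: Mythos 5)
Your high-level plan — split $\frac{\ud}{\ud t}\Lambda(f)$ into a $Q_\gamma$ block and an $L$ block, identify $\langle I'(f),Lf\rangle$ and $\langle J'(f),Q_\gamma(f,f)\rangle$ as the bad cross terms, and absorb them into the good dissipations by tuning $a$ and $R$ — is exactly the architecture of the paper's proof (\cref{sec:Thm3proof}). However, there is a genuine gap in how you propose to control the problematic term $\langle J'_1(f),Q_\gamma(f,f)\rangle$, and without closing it your argument does not go through.

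You claim that $\langle J'_1(f),Q_\gamma(f,f)\rangle$ can be ``bounded by an interpolation of $\IFish{f}$, $\HRel{f}{\mathcal{M}_{T^\circ,1}}$ and the kinetic energy $E$'' (with a Hardy estimate for the $|v|^{-1}$ weight), and then dominated by a reservoir $\sim c_{ei}R\,J(f)$. This cannot work: after integration by parts, $\langle J'_1(f),Q_\gamma(f,f)\rangle$ produces expressions quadratic in \emph{second} derivatives of $\log f$ — terms of the form $\nabla\cdot\bigl(\Pperp{v}\nabla\log f/|v|\bigr)$ paired against $\nablavwminus\cdot\bigl(\Pperp{v-v^*}\nablavwminus\log\ff\bigr)$ — and these are not controlled by the first-order functionals $I$, $H$, $E$, nor by $J$ alone. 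The paper's route around this is precisely what \cref{lem:dissipation extra inequality} and the Cauchy--Schwarz estimate in~\eqref{eq:before interpolation sobolev} provide: the bad pairing is bounded by
\[
\langle J'_1(f),Q_\gamma(f,f)\rangle \;\lesssim\; \bigl(1+I(f)\bigr)^{\theta}\,D_{J_1,L_S}(f)^{1/2}\,D_{I,Q_\gamma}(f,f)^{1/2},
\]
i.e.\ a \emph{geometric mean of two dissipation functionals}, not of the underlying functionals themselves. Similarly, $\langle I'(f),L_S f\rangle$ is absorbed not into $J$ directly but into the dissipations $D_{H,L_S}$ and $D_{J_1,L_S}$ via~\eqref{eq:bound I prime L in terms of dissipations of H and Q}. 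Both absorptions rely on the fact that $D_{J_1,L_S}$ and $D_{I,Q_\gamma}$ are second-order coercive quantities, and on the log-Sobolev inequality with the symmetric constant $\Lambda_3^{\mathrm{sym}}$ to make $D_{I,Q_\gamma}$ signed in the first place. Your proposal does not construct or invoke these dissipations, so the Young/Cauchy--Schwarz closure that balances the coefficients of $D_{I,Q_\gamma}$, $D_{J_1,L_S}$, $D_{H,L_S}$, $D_{H,Q_\gamma}$ — the heart of the proof — is missing.

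A secondary point: for $\gamma=-3$ the structure of hypothesis~\eqref{eq:Coulombcondition}, in particular the exponent $I^{3/2}$, does not arise from a borderline interpolation as you suggest, but from the scale invariance of~\eqref{eq:infiniteCoulomb}. The paper rescales $f_\lambda(v)=\lambda^{-3}f(v/\lambda)$ so that $I(f^\circ_\lambda)=1$ (i.e.\ $\lambda=I^{1/2}(f^\circ)$), under which $J(f^\circ_\lambda)=J(f^\circ)/I^{3/2}(f^\circ)$; the condition on $c_{ei}$ then comes from requiring the balancing inequalities to close at $R=1$ in the rescaled variables. Your ``continuity/bootstrap'' step does correspond to the paper's a priori/a posteriori argument that $1+\Lambda(f)\le C^\circ R$ propagates in time, so that part of the intuition is sound.

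In short: the decomposition and the absorption strategy match the paper, but the mechanism by which you control $\langle J'_1(f),Q_\gamma(f,f)\rangle$ is wrong — you need the cross-dissipation bounds of \cref{lem:dissipation extra inequality}, not an interpolation of first-order functionals — and the Coulomb condition comes from scaling, not from a critical interpolation.
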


The proof of this theorem is the content of~\cref{sec:Thm3proof}.

Let us emphasize that this result shows that the global-in-time boundedness of the Fisher information persists, even though the Fisher information itself is no longer monotone for $\gamma < -\sqrt{8}$. 

For general finite mass-ratio of two species, the question of well-posedness of the two-species system remains open when $\gamma < - \sqrt{8}$. In the linearized setting and large mass ratio, the problem is related to the two-scale problem considered  in~\cite{GW25}.   

\subsection{Preliminaries and structure of the paper}

Throughout the paper, we follow the notational convention introduced in~\cite{GS25} and write $\langle I'(f) ,g\rangle $ for the Gateaux derivative of a functional $I$ in direction $g$.
Moreover, we recall the following standard notation for weighted spaces.
\begin{definition}[Weighted spaces]
We work in weighted spaces $L^2_l$ and $H^k_l$, $l\geq 0$, $k\in \mathbb N$, defined by
\begin{align*}
    \| f\|^2_{L^2_l}&= \int_{\R^3} f^2(v) \langle v\rangle^{l} \d v, \\
    \|f\|^2_{H^k_l} &= \sum_{k=0}^n \| \nabla^kf\|^2_{L^2_l}.
\end{align*}
To control the decay of the solution, we also use the moments
\begin{align*}
    \| f\|_{L^1_l} = \int_{\R^3} f(v) \langle v\rangle^l \d v. 
\end{align*}
\end{definition}

We introduce the number densities $n_i$ in~\eqref{eq:APiC} so that we can assume throughout the paper that $f$, $f_i$ are probability densities. 

\medskip 

We use the standard notation for the Frobenius product of real matrices and the induced Hilbert-Schmidt norm
\begin{align*}
    A : B = \sum_{i,j} A_{ij} B_{ij} = \operatorname{trace(A^t B)} \qquad \text{and} \qquad \| A\|^2_{\mathrm{HS}}= A: A.  
\end{align*}

Let us briefly summarize the structure of the paper. 
In~\cref{sec:finite-mass} we treat the finite-mass case and prove~\cref{thm:Global}. In~\cref{Sec:Infinite} we consider the case of two species, of which one has infinite particle mass. \cref{subsec:PrepInfinite} contains preliminary results on the dissipation functionals for the infinite mass case, while the proof of~\cref{thm:FisherIncrease} is carried out in~\cref{sec:nonMono}. The novel dissipation functional and the proof of~\cref{thm:Lambda dissipation} are the content of~\cref{sec:Thm3proof}. Some frequently used identities in spherical calculus are gathered in Appendix \ref{sec:appendixA}.

\section{The multi-species Landau system with finite masses}\label{sec:finite-mass}

\subsection{Lifting argument for the multi-species system}

We perform a lifting procedure by doubling the number of variables as in~\cite{GS25}. To this end, we define
\begin{equation*}
    m_{N+i} \coloneqq \mi, 
    \qquad 1 \le i \le N,
\end{equation*}
so that particles $i$ and $N+i$ belong to the same species. We introduce the lifted linear operators acting on functions
\[
  \tF = \tF(v_1, \ldots, v_{2N}) : (\R^3)^{2N} \to \R
\]
by
\begin{equation*}
    \tQ_{ij}(\tF)
    \coloneqq \nablaijminus \cdot \left( \cAij \,\nablaijminus \tF \right),
\end{equation*}
and consider the linear evolution of the Cauchy problem 
\begin{equation}\label{eq:lifted multi_Landau}
    \begin{cases}
        \partial_t \tF = \tQ(\tF)  = \displaystyle\sum_{\substack{1 \le i \le N\\ N+1 \le j \le 2N}} \tQ_{ij}(\tF), \\[0.5em]
        \tF^\circ(v_1, \dots, v_{2N}) 
          = \displaystyle\prod_{i=1}^N f^\circ_i(v_i)\, f^\circ_i(v_{N+i}). 
    \end{cases}
\end{equation}

The evolution~\eqref{eq:lifted multi_Landau} is useful because the equations for the marginals can be related back to the multi-species system~\eqref{eq:multi_Landau}. To see this, we consider the marginals
\begin{equation*}
    \marg{k} 
    = \int_{(\R^3)^{2N-1}} \tF \,\ud\widehat{\mathbf{v}}_k, 
    \qquad 1 \le k \le 2N,
\end{equation*}
where $\widehat{\mathbf{v}}_k$ denotes the vector $(v_1, \ldots, v_{2N})$ with $v_k$ omitted.
For $1 \le i \le N$, we have
\begin{equation*}
    \partial_t \marg{i}
    = \sum_{j=N+1}^{2N}\frac{\gradvi}{\mi} \cdot \left(
        \int_{(\R^3)^{2N-1}} \cAij \,\nablaijminus \tF(v_1, \ldots, v_{2N})
        \,\ud{\widehat{\mathbf{v}}_i}
    \right),
\end{equation*}
and thus, at time $t=0$, we recover the multi-species equation~\eqref{eq:multi_Landau};
\begin{equation*}
    \partial_t \marg{i}\Big|_{t = 0} 
    = \sum_{j=1}^N Q_{ij}(f_i^\circ, f_j^\circ).
\end{equation*}

For a single density, we define the Fisher information by
\begin{equation*}
    \IFish{f}
    \coloneqq \int_{\R^3} \abs{\nabla \log f}^2 f \d v.
\end{equation*}
For the lifted multi-species system we introduce the weighted Fisher information
\begin{equation*}
    \tI (\tF)
    \coloneqq \int_{(\R^3)^{2N}} 
      \sum_{i=1}^{2N} \frac{1}{\mi} 
      \abs{\nabla_{v_i} \log \tF}^2 \tF \d \mathbf{v}_{2N},
\end{equation*} where $\mathbf{v}_{2N} = (v_1, \ldots, v_{2N})$. Although this definition is asymmetric, the weighted Fisher information retains the key property that we need: it controls a suitably scaled sum of the marginals' Fisher information.

\begin{lemma} \label{lem:Fisher Carlen}
    Let $\tF:(\R^3)^{2N} \rightarrow \R$ be a probability density with finite
    weighted Fisher information $\tI(\tF)$ and marginals
    \begin{equation*}
        \marg{i} = \marg{N+i} = f_i, \qquad 1 \le i \le N.
    \end{equation*}
    Then
    \begin{equation} \label{Carlen_ineq}
        \frac12 \tI(\tF) \;\ge\; \sum_{i=1}^N \frac{1}{\mi}\,\IFish{f_i},
    \end{equation}
    with equality if and only if $\tF = \left(f_1 \otimes \cdots \otimes f_N\right)^{\otimes 2}$.
\end{lemma}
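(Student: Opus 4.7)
The plan is to apply the standard Carlen-type super-additivity of Fisher information one coordinate at a time, and then sum with the mass weights. The building block is the following pointwise bound: for any smooth probability density $G$ on $(\R^3)^n$ with $k$-th marginal $g_k(v_k) = \int G \,\d\widehat{\mathbf{v}}_k$, one has $\IFish{g_k} \leq \int |\nabla_{v_k} \log G|^2 G \,\d\mathbf{v}$. This follows from Cauchy--Schwarz applied to $\nabla_{v_k} g_k = \int \nabla_{v_k} G \,\d\widehat{\mathbf{v}}_k$ by splitting $\nabla_{v_k} G = (\nabla_{v_k} G/\sqrt{G})\sqrt{G}$: one obtains $|\nabla_{v_k} g_k|^2 \leq g_k \int |\nabla_{v_k} G|^2/G \,\d\widehat{\mathbf{v}}_k$, and dividing by $g_k$ followed by integration in $v_k$ gives the claim.

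Applied to each of the $2N$ coordinates of $\tF$, and using $m_{N+i} = m_i$ together with the marginal hypothesis $\marg{i} = \marg{N+i} = f_i$, this yields
\begin{align*}
    \tI(\tF) = \sum_{k=1}^{2N} \frac{1}{m_k} \int_{(\R^3)^{2N}} |\nabla_{v_k} \log \tF|^2 \tF \,\d\mathbf{v}_{2N} \;\geq\; \sum_{k=1}^{2N} \frac{1}{m_k} \IFish{\marg{k}} = 2 \sum_{i=1}^{N} \frac{1}{m_i} \IFish{f_i},
\end{align*}
which is \eqref{Carlen_ineq} after dividing by two.

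For the equality case, equality in the Cauchy--Schwarz step for coordinate $k$ forces $\nabla_{v_k} \log \tF$ to depend on $v_k$ only; equivalently, $\log \tF = u_k(v_k) + \phi_k(\widehat{\mathbf{v}}_k)$ for some functions. The step I expect to require the most care is exactly this equality analysis: upgrading coordinate-wise separability to full factorization. This follows by noting that $\partial_{v_j} \nabla_{v_k} \log \tF = 0$ for every $j \neq k$, so $\log \tF$ is a sum of single-variable functions, giving $\tF = \prod_{k=1}^{2N} g_k(v_k)$; the marginal conditions then pin $g_i = g_{N+i} = f_i$, yielding $\tF = (f_1 \otimes \cdots \otimes f_N)^{\otimes 2}$. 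The remainder is essentially textbook; only the mass-weighted bookkeeping and the doubling $m_{N+i} = m_i$ need care in assembling the main inequality.
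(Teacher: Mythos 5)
Your proof is correct and rests on the same mechanism as the paper's, namely Carlen's superadditivity of Fisher information for marginals; the difference is in how you deploy it. The paper rescales coordinates via $v_k \mapsto \sqrt{m_k}\,v_k$, observes that the Fisher information of the push-forward density equals $\tI(\tF)$, and then cites Carlen's theorem directly, so the mass weights are absorbed by the change of variables. You instead prove the weighted inequality from scratch by the coordinate-wise Cauchy--Schwarz argument (which is essentially Carlen's own proof, re-derived with the $1/m_k$ weights carried along). Both are valid; yours is self-contained while the paper's is shorter and defers the measure-theoretic care to the reference.

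The one place your argument is thinner than citing Carlen is the equality case. You pass from equality in Cauchy--Schwarz to $\nabla_{v_k}\log\tF = c_k(v_k)$, and then to full factorization via the mixed second derivatives $\partial_{v_j}\nabla_{v_k}\log\tF = 0$. This step implicitly assumes $\log\tF$ is twice differentiable, which the hypothesis of finite weighted Fisher information does not grant a priori. Carlen's theorem handles the equality case in the general measurable setting without such assumptions, which is exactly what the paper's citation buys. If you want your version to be fully rigorous at the same level of generality, you should either impose smoothness, or argue via the one-variable separability $\log\tF = u_k(v_k) + \phi_k(\widehat{\mathbf v}_k)$ (a.e., for each $k$) and combine these decompositions directly rather than differentiating again. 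In the present paper this is harmless since the lemma is ultimately applied to smooth solutions, but it is worth being aware that citing Carlen avoids the issue cleanly.
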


\begin{proof}
    Consider the change of variables $\Phi:(\R^3)^{2N}\to(\R^3)^{2N}$ defined by
    \begin{equation*}
      \Phi:\ (v_1,\dots,v_{2N}) \mapsto \left(\sqrt{m_1}\,v_1,\dots,\sqrt{m_{2N}}\,v_{2N}\right).
    \end{equation*}
    Then the push-forward density $G = |\det D\Phi|^{-1}\,\tF\circ\Phi^{-1}$ satisfies
    \begin{equation*}
        \int_{\R^{2N}} \abs{\nabla \log G}^2 G \d \mathbf{v}_{2N} = \tI(\tF).
    \end{equation*}
    The claim is then a direct consequence of Carlen's inequality, i.e.~\cite[Theorem~3]{Ca91}. The factor $\frac12$ stems from the fact that $f_i=f_{N+i}$, so each probability distribution appears twice as a marginal.
\end{proof}

\subsection{Dissipation of the weighted Fisher information}\label{sec:Dissipation of the weighted Fisher information}
This section is devoted to the proof of the dissipation of the weighted Fisher information, given local well-posedness. This is the fundamental tool to prove~\cref{thm:Global}. One key ingredient is to introduce the following differential operators,
\begin{equation}\label{gradnotation}
    \bbnabla \coloneqq \left(\frac{\gradvi}{\sqrt{\mi}}, \frac{\gradvj}{\sqrt{\mj}}\right),
    \qquad 
    \notbbnabla\:\: \coloneqq \frac{\gradvi}{\mi} - \frac{\gradvj}{\mj}.
\end{equation}

\begin{lemma} \label{lem:linearity}
    Let $\tF(v_1, \cdots, v_{2N}) = \otimes_{k=1}^{2N} f_k $ be a probability density with marginals $f_k$ and finite weighted Fisher information $\tI(\tF)$. Then for any pair $(i, j)\, :\, 1 \le i\le N, \, N+1\le j\le 2N$,
    \begin{equation*} 
        \big\langle \tI'(\tF), \tQ_{ij}(\tF) \big\rangle
        = \big\langle \tIij'\fij, \tQ_{ij}\fij \big\rangle,
    \end{equation*}
    where
    \begin{equation*}
    \tQ_{ij}(f_i \otimes f_j) \coloneqq\: \notbbnabla \cdot \left( \cAij\, \notbbnabla \fij \right)
    \end{equation*}
    and
    \begin{equation*}
        \tIij\fij 
        \coloneqq \int_{(\R^3)^{2}} \abs{\bbnabla \log \fij}^2 \fij.
    \end{equation*}
    As a consequence, we have the comparison principle
    \begin{align} \label{eq:DissInequality}
         \frac{\d}{\d t} \mathcal{I}(F)\big|_{t=0} \leq \frac12  \frac{\d}{\d t} \tI'(\tF) \big|_{t=0}.
    \end{align}
\end{lemma}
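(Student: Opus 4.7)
The plan is to split the proof into two parts. For the pairwise identity, I would exploit the factorization $\tF = \otimes_k f_k$ together with the observation that $\tQ_{ij}$ involves only the variables $(v_i, v_j)$, so that all other coordinates act as spectators. For the comparison inequality, I would invoke Carlen's inequality from~\cref{lem:Fisher Carlen}, combined with the fact that $\tF^\circ$ is a tensor product which saturates that inequality, so the standard ``positivity plus vanishing at $t=0$'' trick yields a one-sided derivative bound.

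For the identity, I begin with the standard Gateaux formula
\[
    \tI'(\tF) = \sum_{k=1}^{2N} \frac{1}{m_k}\bigl( -2\Delta_{v_k} \log \tF - \abs{\nabla_{v_k} \log \tF}^2 \bigr),
\]
which, evaluated at the tensor product, simplifies term by term so that the $k$-th summand depends only on $v_k$. Applying Fubini in the pairing $\int \tI'(\tF)\,\tQ_{ij}(\tF)\,\d\mathbf{v}_{2N}$, each spectator index $k \ne i, j$ contributes the product of a finite $v_k$-integral and $\iint \tQ_{ij}(f_i\otimes f_j)\,\d v_i\,\d v_j$; the latter vanishes because $\tQ_{ij}$ is a total divergence in $(v_i,v_j)$. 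Only the indices $k=i$ and $k=j$ survive; after the remaining spectator probability densities $f_{k'}(v_{k'})$ integrate to $1$, what is left reassembles into exactly $\langle \tIij'(f_i\otimes f_j),\,\tQ_{ij}(f_i\otimes f_j)\rangle$.

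For the comparison, I set
\[
    \phi(t) = \tfrac12 \tI(\tF(t)) - \sum_{i=1}^N \frac{1}{m_i} \IFish{\marg{i}(t)}.
\]
\cref{lem:Fisher Carlen}, applied at each time $t$ in a right neighborhood of $0$, gives $\phi(t) \geq 0$, while the tensor product structure of $\tF^\circ$ yields $\phi(0) = 0$; hence $\phi'(0^+) \geq 0$. Combined with the lifting computation preceding this lemma, which shows that the marginals $\marg{i}(t)$ satisfy the multi-species Landau evolution at $t = 0$ and therefore $\frac{\d}{\d t}\sum_i \tfrac{1}{m_i} \IFish{\marg{i}}\big|_{t=0} = \frac{\d}{\d t}\mathcal{I}(F)\big|_{t=0}$, this produces the desired bound. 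The main technical point I anticipate is justifying the integration by parts on $(v_i,v_j)$ and the Fubini exchanges, which require sufficient decay of $\tF^\circ$ and $A(v_i-v_j)\,\notbbnabla \tF^\circ$ at infinity; under the standing moment and integrability hypotheses on $f_i^\circ$ that ensure $\tI(\tF^\circ) < \infty$, these steps should be routine.
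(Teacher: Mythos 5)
Your proof is correct and follows essentially the same route as the paper's. For the pairwise identity, the paper works directly with the Gateaux pairing $\langle \tI_k'(\tF),\tQ_{ij}(\tF)\rangle$ for each spectator index $k\neq i,j$ and uses the factorization to argue that $\nabla_{v_k}(\tQ_{ij}(\tF)/\tF)=0$ while the remaining term factors through $\iint \tQ_{ij}(f_i\otimes f_j)\,\ud v_i\,\ud v_j=0$; you instead write out the Gateaux density $\tI'(\tF)=\sum_k m_k^{-1}\bigl(-2\Delta_{v_k}\log\tF-\abs{\nabla_{v_k}\log\tF}^2\bigr)$ and invoke Fubini plus the divergence structure of $\tQ_{ij}$ — these are the same argument in different clothing, though you are a bit more explicit about why the second term vanishes. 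For the comparison principle, your function $\phi(t)=\tfrac12\tI(\tF(t))-\sum_i m_i^{-1}\IFish{\marg{i}(t)}$ is exactly the negative of the combination the paper differentiates, and the ``nonnegative with equality at $t=0$ implies one-sided derivative sign'' reasoning, combined with the lifting identity $\partial_t\marg{i}\big|_{t=0}=\sum_j Q_{ij}(f_i^\circ,f_j^\circ)$, reproduces the paper's chain.
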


\begin{proof}
    By linearity, it suffices to show that for any index $k\neq i,j$ and
    \begin{equation*}
        \tI_k (\tF) \coloneqq \int_{(\R^3)^{2N}} \frac{1}{m_k}\abs{\nabla_{v_k} \log \tF}^2 \tF,
    \end{equation*}
    we have 
    \begin{equation*}
        \big\langle \tI_k' (\tF), \tQ_{ij} (\tF) \big\rangle = 0.
    \end{equation*}
    A direct computation gives
    \begin{align*}
        \big\langle \tI_k' (\tF), \tQ_{ij}(\tF) \big\rangle 
        &= \int_{(\R^3)^{2N}} \frac{2}{m_k} 
           \left(\nabla_{v_k } \left ( \frac{\tQ_{ij}(\tF)}{\tF}\right )\cdot 
                 \nabla_{v_k} \log \tF\right) \tF 
        + \int_{(\R^3)^{2N}} \frac{1}{m_k} \tQ_{ij}(\tF)\,
                 \abs{\nabla_{v_k} \log \tF}^2 \\
        &= \frac{1}{m_k}\int_{(\R^3)^{2N}} 
           2\,\nabla_{v_k} 
           \left ( \frac{\tQ_{ij}(\tF)}{\tF}\right ) \cdot 
           \nabla_{v_k} \tF   
           + \tQ_{ij}(\tF)\abs{\nabla_{v_k} \log \tF}^2.
    \end{align*}
    Since $\tF$ is factorized, $\tQ_{ij}(\tF)/\tF$ does not depend on $v_k$ while $\nabla_{v_k} \log \tF$ only depends on $v_k$. As a consequence, the above integrals vanish, which proves the claim.

    Finally, the comparison principle~\eqref{eq:DissInequality} follows from the same argument as in the single-species case:
    \begin{align*}
        \frac{\d}{\d t} \mathcal{I}(F)\big|_{t=0}=\sum_{i=1}^N \big\langle \frac{1}{m_i}I '(f_i), Q_{ij}(f_i, f_j) \big\rangle 
        &= \sum_{i=1}^N \big\langle \frac{1}{m_i}I '(f_i), \partial_t  \Pi_i \tF\big|_{t=0}\big\rangle \\
        &=  \partial_t \left(\sum_{i=1}^N\frac{1}{m_i}I (\Pi_i \tF) -\frac12 \tI(\tF) \right)\Big|_{t=0} + \partial_t \frac12 \tI(\tF)\big|_{t=0}\\
        &\leq \partial_t \frac12 \tI(\tF)\big|_{t=0},
    \end{align*} where the first term in the second line is non-positive due to \cref{lem:Fisher Carlen}.
\end{proof}

Let $\tF$ be a solution to~\eqref{eq:multi_Landau}. At time $t=0$, the initial data is factorized as $\tF^\circ = (f_1^\circ \otimes \cdots \otimes f_N^\circ)^{\otimes 2}$, and \cref{lem:linearity} allows us to rewrite each contribution to the dissipation as
\begin{equation*}
    \partial_t \tI (\tF)\big|_{t=0} 
    = \sum_{\substack{1 \le i \le N\\N+1 \le j \le 2N}} \big\langle \tI'(\tF^\circ),\tQ_{ij}(\tF^\circ) \big\rangle = \sum_{\substack{1 \le i \le N\\N+1 \le j \le 2N}} \big\langle \tI'_{ij}\left(f_i^\circ \otimes f_j^\circ\right),\tQ_{ij}\left(f_i^\circ \otimes f_j^\circ\right) \big\rangle.
\end{equation*}
\begin{lemma}[Two-particle dissipation identity]\label{lem:two-particle-dissipation}
    Let $1 \le i \le N$ and $N+1 \le j \le 2N$, and let $f_i,f_{j-N}\, (\coloneqq f_j)$ be solutions of \eqref{eq:multi_Landau} given by \cref{lem:local_well_posedness}. The Gâteaux derivative of the two-particles weighted Fisher information $\tI_{ij}(f_i \otimes f_j)$ in the direction $\tQ_{ij}(f_i \otimes f_j)$ is given by
    \begin{equation}\label{eq:T111}
        \big\langle \tIij'\fij, \tQ_{ij}\fij \big\rangle 
        = -2c_{ij}\int_{(\R^3)^{2}} 
          \!\!\!\bbnabla \left(\Aij\, \notbbnabla \Uij\right) 
          : \bbnabla \notbbnabla \Uij.
    \end{equation}
\end{lemma}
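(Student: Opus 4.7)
The proof is a direct calculation: compute the Gâteaux derivative of $\tI_{ij}$ at $F=\fij$, substitute $\tQ_{ij}F$, and reorganise the resulting identity by two successive integrations by parts, in the spirit of the Fisher-information dissipation calculus of~\cite{GS25}. Throughout, I write $F = \fij$ and $U = \Uij$, and use the identity $\notbbnabla F = F\,\notbbnabla U$, which recasts $\tQ_{ij}F$ as $c_{ij}\,\notbbnabla\cdot(F\,A\notbbnabla U)$.

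First I differentiate $\tI_{ij}(F) = \int |\bbnabla F|^2 / F$ in a test direction $G$, obtaining
\[
\langle \tI_{ij}'(F), G\rangle = 2\int \bbnabla\log F \cdot \bbnabla G - \int |\bbnabla\log F|^2\, G,
\]
and a single integration by parts in $\bbnabla$ (whose boundary terms vanish under the decay and regularity furnished by local well-posedness) puts this in the convenient form
\[
\langle \tI_{ij}'(F), G\rangle = -\int G\,\bigl(2\,\bbnabla\!\cdot\!\bbnabla U + |\bbnabla U|^2\bigr).
\]

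Next I substitute $G = \tQ_{ij}F = c_{ij}\,\notbbnabla\cdot(F\,A\notbbnabla U)$ and integrate by parts once in $\notbbnabla$, yielding
\[
\langle \tI_{ij}'(F), \tQ_{ij}F\rangle = c_{ij}\int F\,(A\notbbnabla U)\cdot\notbbnabla\bigl(2\,\bbnabla\!\cdot\!\bbnabla U + |\bbnabla U|^2\bigr).
\]
By commutativity of mixed partials, $\notbbnabla$ commutes with both $\bbnabla$ and $\bbnabla\!\cdot\!\bbnabla$; applied componentwise, this turns the first bracket into $2\,\bbnabla\!\cdot\!\bbnabla(\notbbnabla U) + 2\,\bbnabla U \cdot \bbnabla(\notbbnabla U)$, where the latter pairs the $6$-vector $\bbnabla U$ with the matrix $\bbnabla\notbbnabla U$ to give a $3$-vector. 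A second integration by parts in $\bbnabla$ applied to the $\bbnabla\!\cdot\!\bbnabla$-piece redistributes one derivative from $\bbnabla\notbbnabla U$ onto the factor $F(A\notbbnabla U)$. The Leibniz rule splits this into two pieces: the piece in which the redistributed $\bbnabla$ hits the density $F$ uses $\bbnabla F = F\,\bbnabla U$ and exactly cancels the $|\bbnabla U|^2$-contribution coming from $\notbbnabla|\bbnabla U|^2$, term by term; the piece in which it hits $A\notbbnabla U$ assembles into the Frobenius pairing $\bbnabla(A\notbbnabla U):\bbnabla\notbbnabla U$. This gives~\eqref{eq:T111}, with the weight $F = \fij$ carried through the calculation.

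The main obstacle is the index bookkeeping for the two distinct gradient operators: $\bbnabla$ produces mass-rescaled $\R^6$-valued quantities while $\notbbnabla$ produces $\R^3$-valued quantities, so $\bbnabla\notbbnabla U$ is a $6\times 3$ matrix and the Frobenius contraction runs over both index pairs. The cancellation at the end is the algebraic miracle that identifies the correct combination $2\,\bbnabla\!\cdot\!\bbnabla U + |\bbnabla U|^2$ as the one resolvable by Leibniz together with $\bbnabla F = F\,\bbnabla U$, applied at exactly the right moment. Once the tracking is in place, the cancellation is purely algebraic; the analytic content is confined to justifying the two integrations by parts, which is ensured by the decay and regularity assumptions propagated by local well-posedness.
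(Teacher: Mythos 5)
Your proof is correct, and the algebra genuinely works out: putting $\langle \tI_{ij}'(F),G\rangle$ in the divergence form $-\int G\,(2\,\bbnabla\!\cdot\!\bbnabla U + |\bbnabla U|^2)$ before substituting $G=\tQ_{ij}F$, then doing one IBP in $\notbbnabla$, commuting $\notbbnabla$ past $\bbnabla\!\cdot\!\bbnabla$, and doing a final IBP in $\bbnabla$ so that the Leibniz term hitting $F$ (via $\bbnabla F = F\,\bbnabla U$) exactly cancels the $\notbbnabla|\bbnabla U|^2$ contribution. This is the same underlying mechanism as the paper's proof -- both lean on the relation $\bbnabla F = F\,\bbnabla U$, on commutativity of $\bbnabla$ with $\notbbnabla$, and on two IBPs that produce and then cancel the same cubic term -- but the bookkeeping is organized differently. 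The paper keeps $\tI_{ij}'$ in pairing form, splits immediately into $T_1 + T_2$, further splits $T_1$ using $\tQ_{ij}F/F = c_{ij}\bigl(\notbbnabla\cdot(A\notbbnabla U) + \notbbnabla U\cdot A\notbbnabla U\bigr)$ into $T_{1,1}+T_{1,2}$, and does all IBPs in the $\notbbnabla$ variable (together with the Hessian identity $\bbnabla\notbbnabla F/F = \bbnabla\notbbnabla U + \bbnabla U\otimes\notbbnabla U$); the cancellation happens among three explicit pieces. Your route performs one of the IBPs in $\bbnabla$ up front and one at the end, collapsing the three-way cancellation into a single Leibniz step. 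Both are equally rigorous given the regularity from \cref{lem:local_well_posedness}. You also correctly note that the density $\fij$ must appear in the integrand of~\eqref{eq:T111}; as stated there, it is missing (a typo -- the subsequent identities~\eqref{eq:main term paring}--\eqref{eq:quadratic term pairing} restore the $\ofij$ weight).
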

\begin{proof}
    We have 
    \begin{align*}
    \big\langle \tIij'\fij, \tQ_{ij}\fij \big\rangle 
    &= \int_{(\R^3)^{2}} 
        2 \left(\bbnabla \left ( \frac{\tQ_{ij}\fij}{ f_i \otimes f_j }\right) 
        \cdot \bbnabla\, \Uij\right) \fij\\
    &\quad + \int_{(\R^3)^{2}} \tQ_{ij}\fij \abs{\bbnabla\, \Uij}^2 
    \\ &\eqqcolon \text{T}_1 + \text{T}_2.
\end{align*}
    Throughout the proof, all integrations by parts are justified the regularity provided by \cref{lem:local_well_posedness}; we will not comment on this in the sequel. 
    
    We begin with the term $\text{T}_2$, integrating by parts, we obtain
    \begin{align*}
        \text{T}_2 &= \int_{(\R^3)^{2}} \tQ_{ij}\fij\, \abs{\bbnabla\, \Uij}^2 \\
            &= \int_{(\R^3)^{2}} \notbbnabla \cdot \left(\cAij \,\notbbnabla \fij\right) 
               \abs{\bbnabla\, \Uij}^2 \\
            &= -2c_{ij}\int_{(\R^3)^{2}} \bbnabla\notbbnabla \Uij 
                : \left(\bbnabla\, \Uij\otimes \Aij \notbbnabla \Uij \right) \fij.
    \end{align*}
    Next we treat the term $\text{T}_1$. Writing $A= A(v_i-v_j)$ we have the identity
    \begin{align*}
        \frac{\tQ_{ij}\fij}{ f_i \otimes f_j } 
        = c_{ij} \lp \notbbnabla \cdot \left(A \notbbnabla \Uij\right) + \notbbnabla \Uij \cdot A \notbbnabla \Uij \rp ,
    \end{align*}
    which allows us to decompose $\text{T}_1$ as
    \begin{align*}
        \text{T}_1
            &= 2c_{ij}\int_{(\R^3)^{2}} 
               \left(\bbnabla \notbbnabla \cdot \left( A \notbbnabla \Uij\right) 
               \cdot \bbnabla\, \Uij\right) \fij \\
            &\quad + 2c_{ij}\int_{(\R^3)^{2}} 
               \left(\bbnabla (\notbbnabla \Uij \cdot A \notbbnabla \Uij) 
               \cdot \bbnabla\, \Uij\right) \fij \\
            &\eqqcolon T_{1,1} + T_{1,2}.
    \end{align*}
    Integrating by parts, we obtain
    \begin{align*}
        \text{T}_{1,1}  &= 2c_{ij}\int_{(\R^3)^{2}} 
            \bbnabla \left(\notbbnabla \cdot \left( A \notbbnabla \Uij\right) \right) 
            \cdot \bbnabla \fij \\
            &= -2c_{ij}\int_{(\R^3)^{2}} 
                \bbnabla \left( A \notbbnabla \Uij\right) : \bbnabla \notbbnabla \fij \\
            &= -2c_{ij}\int_{(\R^3)^{2}} 
                \bbnabla \left( A \notbbnabla \Uij\right) : \bbnabla \notbbnabla \Uij \fij \\
            &\quad -2c_{ij}\int_{(\R^3)^{2}} 
                \bbnabla \left( A \notbbnabla \Uij\right) 
                : \left(\bbnabla\, \Uij\, \otimes \notbbnabla \Uij\right) \fij,
    \end{align*}
    where we used
    \begin{equation*}
        \frac{\bbnabla \notbbnabla \fij}{f_i \otimes f_j}
        = \bbnabla \notbbnabla \Uij 
          + \bbnabla\, \Uij\, \otimes \notbbnabla \Uij.
    \end{equation*}
    By the product rule,
    \begin{align*} 
        \text{T}_{1,2} &= 2c_{ij}\int_{(\R^3)^{2}} 
          \bbnabla \left(A \notbbnabla \Uij\right) 
          : \left( \bbnabla\, \Uij \otimes \notbbnabla\Uij \right) \fij \\
            &\quad + 2c_{ij}\int_{(\R^3)^{2}} \bbnabla\notbbnabla \Uij 
                : \left(\bbnabla\, \Uij\otimes A \notbbnabla \Uij \right) \fij.
    \end{align*}
    Collecting the identities for $\text{T}_2$, $\text{T}_{1,1}$ and $\text{T}_{1,2}$ yields~\eqref{eq:T111}.
\end{proof}

Now we introduce two new physical variables, namely the relative velocity and the center-of-momentum frame; for a fixed pair $(i, j)$ we set
\begin{equation} \label{eq:new_variables}
    z \coloneqq v_i - v_j, \quad z^* \coloneqq \frac{\mi v_i + \mj v_j}{\Mij}, \quad \Mij \coloneqq \mi + \mj, \quad \aij \coloneqq \frac{1}{\mi} + \frac{1}{\mj}.
\end{equation}
Then the transformation $(v_i, v_j) \mapsto (z, z^*)$ is volume-preserving and we obtain
\begin{equation*}
    \bbnabla \mapsto \left(\frac{\gradz}{\sqrt{\mi}} + \frac{\sqrt{\mi}}{\Mij}\gradzs, -\frac{\gradz}{\sqrt{\mj}} + \frac{\sqrt{\mj}}{\Mij}\gradzs\right) , \quad \notbbnabla \:\,\mapsto \aij \gradz.
\end{equation*}
Although the function $\ofij(z, z_*) \coloneq \fij(v_i, v_j)$ is no longer a tensor product in these variables, we obtain convenient formulas such as
\begin{align*}
    \bbnabla \left(\Aij \notbbnabla \Uij \right) = \aij&\Bigg(\frac{\gradz}{\sqrt{\mi}} \left( \Az \gradz \log\ofij \right) + \frac{\sqrt{\mi}}{\Mij} \gradzs\gradz \log\ofij \Az, \\  
    & \quad-\frac{\gradz}{\sqrt{\mj}} \left( \Az \gradz \log\ofij \right) + \frac{\sqrt{\mj}}{\Mij} \gradzs\gradz \log\ofij \Az \Bigg)
\end{align*}
and
\begin{multline*}
    \bbnabla \notbbnabla \log\fij 
    \\ = \aij \left(\frac{\gradz^2}{\sqrt{\mi}} \log\ofij + \frac{\sqrt{\mi}}{\Mij}\gradzs\gradz \log\ofij \Az, 
    -\frac{\gradz^2}{\sqrt{\mj}} \log\ofij + \frac{\sqrt{\mj}}{\Mij}\gradzs\gradz \log\ofij \Az\right).
\end{multline*}
Therefore,
\begin{align*}
    \big\langle \tIij'\fij, \tQ_{ij}\fij \big\rangle &= -2c_{ij}\aij^3 \int_{(\R^3)^2} \gradz^2 \log\ofij : \left(\gradz \left(\Az \nabla_z \log\ofij\right)\right) \ofij \numberthis\label{eq:main term paring} \\
        &\quad -2c_{ij}\frac{\aij^2}{\Mij} \int_{(\R^3)^2} \gradzs \gradz \log\ofij : \left(\gradzs \gradz \log\ofij \Az\right) \ofij. \numberthis\label{eq:quadratic term pairing} \\
        &\eqqcolon \text{I}_{ij} + \text{II}_{ij}.
\end{align*}

Note that the second term \eqref{eq:quadratic term pairing} is non-positive since the symmetric matrix $A$ is positive semi-definite;
\begin{equation*}
    \big\langle \tIij'\fij, \tQ_{ij}\fij \big\rangle \le \text{I}_{ij}.
\end{equation*}
In the regime $-2 \ge \gamma \ge -\sqrt{8}$, we also show that $\text{I}_{ij}$ is non-positive.

\begin{definition} \label{def:optimal_constants}
We define the optimal constant $\Lambda_3$ in the log-Sobolev inequality, for any function $\psi$,
\begin{align} \label{eq:logSob}
    \int_{\S^2} \Gamma_2(\log \psi, \log \psi)\psi \,\ud{\omega} \ge \Lambda_3\int_{\S^2} \abs{\snabla \log\psi}^2\psi \d {\omega},
\end{align}
and $\Lambda_3^{\mathrm{sym}}$ as the optimal constant under the additional symmetry constraint $\psi(z) = \psi (-z)$. Note that $\Lambda_3=2$ and $\Lambda_3^{\mathrm{sym}} \geq 5.5$, see~\cite[Remark 1.4 and Theorem 1.1]{Ji24}). 
\end{definition} 

\begin{proposition}[Coercivity of the two-species dissipation]\label{prop:coercivity}
    In the setting of \cref{lem:two-particle-dissipation},
    \begin{equation*}
        D_{\tI_{ij}, \tQ_{ij}}(f_i \otimes f_j) \coloneqq - \big\langle \tI'_{ij}\fij,\tQ_{ij}\fij \big\rangle
    \end{equation*}
    whenever $-2 \ge \gamma \ge -\sqrt{8}$. Moreover, setting $C_{ij} \coloneqq \Lambda_3 \1_{j \ne N+i} + \Lambda_3^{\mathrm{sym}} \1_{j = N+i}$, we have the lower bound
    \begin{align*}
        D_{\tI_{ij}, \tQ_{ij}}\fij 
            &\ge 2c_{ij}\aij^3 \int_{({\R^3})^{2}} |z|^{2+\gamma} \abs{\Pperp{z}\gradz\left(\hz \cdot \gradz \log\ofij\right) + \frac{\gamma}{2|z|}\Pperp{z}\gradz \log\ofij}^2 \ofij\,\ud{z}\ud{z^*}\\
            &\quad + 2c_{ij}\aij^3 \left( C_{ij} - \frac{\gamma^2}{4} \right) \int_{({\R^3})^{2}} |z|^\gamma \abs{\Pperp{z}\gradz \log\ofij(z, z^*)}^2 \ofij(z, z^*)\,\ud{z}\ud{z^*} \\
            &\quad + 2c_{ij}\frac{\aij^2}{\Mij} \int_{(\R^3)^2} \gradzs \gradz \log\ofij(z, z^*) : \left(\gradzs \gradz \log\ofij(z, z^*) \Az\right) \ofij(z, z^*)\,\ud{z}\ud{z^*}.
    \end{align*}
\end{proposition}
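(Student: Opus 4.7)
The plan is to split the dissipation along the lines of \eqref{eq:main term paring}--\eqref{eq:quadratic term pairing} as $-\big\langle \tIij'\fij,\tQ_{ij}\fij \big\rangle = -\text{I}_{ij} - \text{II}_{ij}$. The term $-\text{II}_{ij}$ is manifestly non-negative because $\Az$ is positive semi-definite, and it coincides verbatim with the third summand of the claimed lower bound, so all the work is to produce a lower bound for the main term $-\text{I}_{ij}$.

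To estimate $-\text{I}_{ij}$ I would follow the approach of~\cite{GS25}, treating $z^*$ as an inert parameter since all derivatives inside $\text{I}_{ij}$ act only in $z$. The two ingredients are the decomposition $\gradz \log\ofij = \hz\,\rpartial \log\ofij + |z|^{-1}\snabla \log\ofij$ together with the identity $\Az\gradz \log \ofij = |z|^{1+\gamma}\snabla \log \ofij$ that comes from $\Pperp{z}\hz = 0$, and repeated integration by parts in $z$, justified by the smoothness and decay provided by \cref{lem:local_well_posedness}. The main algebraic task is to reorganise the resulting expression into the sum
\begin{align*}
    -\text{I}_{ij} &= 2c_{ij}\aij^3 \int_{(\R^3)^2} |z|^{2+\gamma}\abs{\Pperp{z}\gradz(\hz\cdot \gradz \log\ofij) + \tfrac{\gamma}{2|z|}\Pperp{z}\gradz \log\ofij}^2 \ofij\,\ud z\ud{z^*} \\
    &\quad + 2c_{ij}\aij^3 \int_{(\R^3)^2} |z|^\gamma \lp \Gamma_2(\log \ofij,\log \ofij) - \tfrac{\gamma^2}{4}\abs{\Pperp{z}\gradz\log \ofij}^2 \rp \ofij\,\ud z\ud{z^*},
\end{align*}
where $\Gamma_2$ denotes the iterated carré du champ on the sphere of radius $|z|$. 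The first summand is precisely the perfect-square term in the statement.

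For the remaining $\Gamma_2$-piece, I would apply the log-Sobolev inequality on the sphere from \cref{def:optimal_constants} slicewise in $(|z|, z^*)$. In the generic case $j\neq N+i$, no parity symmetry is available and the inequality holds only with the constant $\Lambda_3 = 2$. In the diagonal case $j = N+i$, the factorised structure $f_i \otimes f_j = f_i \otimes f_i$ makes $\ofij(\cdot,z^*)$ invariant under $z\mapsto -z$ (the swap $v_i\leftrightarrow v_{N+i}$ sends $z\mapsto -z$ while preserving the product), so the stronger constant $\Lambda_3^{\mathrm{sym}}$ applies. Either way this produces a lower bound by $2c_{ij}\aij^3(C_{ij} - \gamma^2/4)\int |z|^\gamma \abs{\Pperp{z}\gradz \log \ofij}^2 \ofij$, which is exactly the second summand in the claimed inequality; its non-negativity in turn forces $\Lambda_3 \geq \gamma^2/4$, i.e.\ $\gamma \geq -\sqrt{8}$.

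The main obstacle I anticipate is the algebraic step that produces this specific split, in which the cross-term $\gamma/(2|z|)$ is absorbed into the perfect square while a compensating $-\gamma^2/4$ correction surfaces next to $\Gamma_2$. The bookkeeping is delicate because $\Az$ is anisotropic and several boundary-free integrations by parts must be combined with the spherical identities of \cref{sec:appendixA} in the right order. The structural reason why the single-species computation of~\cite{GS25} goes through with $z^*$ silently along for the ride is that $\notbbnabla = \aij\gradz$ does not touch $z^*$, so no $\gradzs$-derivatives appear during the manipulations of $-\text{I}_{ij}$.
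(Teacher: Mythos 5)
Your proposal follows essentially the same route as the paper: you split off $-\text{II}_{ij}$ (non-negative since $\operatorname{tr}(B^T B A)\geq 0$ for $A$ positive semi-definite), pass to spherical variables in $z$ treating $z^*$ as inert, reorganize $-\text{I}_{ij}$ by integration by parts into the perfect-square term plus a $\big(\Gamma_2 - \tfrac{\gamma^2}{4}\,|\cdot|^2\big)$ piece, and then apply the log-Sobolev inequality slicewise with constant $\Lambda_3$ (generic) or $\Lambda_3^{\mathrm{sym}}$ (diagonal pair $j=N+i$, using the $z\mapsto -z$ symmetry you correctly justify by the particle swap). The only small caution is notational: in the display for $-\text{I}_{ij}$ the $\Gamma_2$ must be normalized so that the weight in front is $|z|^\gamma$ against $\mathrm{d}z$ — equivalently $r^\gamma\,\Gamma_2^{\S^2}$ against $\mathrm{d}r\,\mathrm{d}\omega$ as in the paper — rather than the carré du champ of the sphere of radius $|z|$, which would scale as $|z|^{-4}\Gamma_2^{\S^2}$ and introduce a spurious factor of $|z|^{-2}$.
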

\begin{proof}
    To compute the contraction in \eqref{eq:main term paring}, namely $\text{I}_{ij}$, we use the notation of Appendix~\ref{sec:appendixA};
    \begin{equation*}
        \nabla_z \coloneqq \hz \rpartial + \frac{1}{|z|} \hznabla, \quad \hz \coloneq \frac{z}{|z|}, \quad \rpartial \coloneqq \hz \cdot \nabla_z, \quad \hznabla \coloneqq |z|\Pperp{z}\nabla_z.
    \end{equation*}
    We denote $\tests(r, \omega, z^*) \coloneqq \ufij(r, \omega, z^*) \coloneqq \ofij(z, z^*) \eqqcolon \testf(z, z^*)$ in spherical variables $(z, z^*) = (r\omega, z^*)$ for readability. Combining \cref{lem:decomposition Euclidean Hessian} with \cref{lem:decomposition Hessian-ish}, recalling that $\hz \cdot \hznabla = 0$ and $\Pperp{z}\hz = 0$, we obtain
    \begin{align*}
        \nabla_z^2\log\testf &: \nabla_z\left(|z|^{2+\gamma}\Pperp{z}\nabla_z \log\testf\right) \\
            &= |z|^{\gamma} \abs{\hznabla\rpartial \log\testf}^2 + \left(\gamma-1\right)|z|^{\gamma-1} \hznabla\rpartial \log\testf \cdot \hznabla \log\testf - \gamma\,|z|^{\gamma-2} \abs{\hznabla \log\testf}^2
            \\ &\quad + |z|^{2+\gamma}\norm{\Pperp{z} \nabla^2_z \log\testf\, \Pperp{z} - \frac{1}{|z|}\rpartial \log\testf \Pperp{z}}^2_{\mathrm{HS}} \\
            &\quad+ |z|^{1+\gamma} \rpartial \log\testf\, \Pperp{z} : \left(\Pperp{z} \nabla^2_z \log\testf\, \Pperp{z} 
                - \frac{1}{|z|}\rpartial \log\testf\, \Pperp{z}\right).
    \end{align*}
    Changing to spherical variables $(z, z^*) \mapsto (r, \omega, z^*)$ yields
    \begin{align}
        &\frac{1}{2c_{ij}\alpha_{ij}^3}\,\text{I}_{ij} \nonumber
        \\ &\quad = -\int_{\R^3}\int_{\S^2}\int_0^\infty
            \left(
            r^{2+\gamma} \abs{\nabla_{\S^2}\partial_r \log \ufij}^2
            + \left(\gamma-1\right) r^{1+\gamma} \nabla_{\S^2}\partial_r \log \ufij \cdot \nabla_{\S^2} \log \ufij 
            - \gamma\, r^{\gamma}\abs{\nabla_{\S^2} \log \ufij}^2 \right.
            \label{eq:J1-11}\\
        &\quad\qquad\qquad\qquad\qquad
            \left.+\,r^{\gamma}\norm{\snabla^2 \log \ufij}^2_{\mathrm{HS}}
            + r^{\gamma+1}\partial_r \log \ufij\,
              \Delta_{\S^2} \log \ufij
           \right) \ufij\,\ud r\,\ud\omega\,\ud z^*, \label{eq:J1-12}
    \end{align}
    courtesy of \cref{lem:spherical Hess and Delta}. We now complete the square in \eqref{eq:J1-11} after integrating by parts the last term \eqref{eq:J1-12}. Integrating by parts in spherical variables gives
    \begin{align*}
        -\int_{\S^2} r^{1+\gamma} \left(\partial_r \log\tests \Delta_{\S^2}\log\tests\right) \tests\, \ud{\omega} 
        &= \int_{\S^2} r^{1+\gamma}\left(\snabla\partial_r \log\tests \cdot \snabla\log\tests\right) \tests\, \ud{\omega}  + \int_{\S^2} r^{1+\gamma}\partial_r \tests \abs{\snabla \log\tests}^2 \ud{\omega} \\
        &= - \int_{\S^2} r^{1+\gamma}\left(\snabla\partial_r \log\tests \cdot \snabla\log\tests\right) \tests\, \ud{\omega} \numberthis\label{eq:-1 nabla omega nabla r cdot nabla omega}\\
        &\quad\, + \int_{\S^2} r^{1+\gamma}\partial_r \left( \tests \abs{\snabla \log\tests}^2 \right) \ud{\omega}. \numberthis\label{eq:laplacian to partial r}
    \end{align*}
    The term \eqref{eq:-1 nabla omega nabla r cdot nabla omega} will contribute to the completed square. For \eqref{eq:laplacian to partial r}, an integration by parts in the radial variable gives
    \begin{align*}
        &\int_{\R^3}\int_{S^2}\int_0^\infty r^{1+\gamma} \partial_r \left( \abs{\snabla \log\tests}^2\tests \right) \ud{r}\ud{\omega}\ud{z_*}  \\
        &= \int_{\R^3}\int_{\S^2} \left[ r^{1+\gamma}\left(\abs{\snabla \log\tests}^2\tests\right)(r,\omega,z^*) \right]_0^\infty \ud{\omega}\ud{z^*} 
             -\left(1+\gamma\right) \int_{\R^3}\int_{\S^2}\int_0^\infty r^\gamma \abs{\snabla \log\tests}^2\tests \,\ud{r}\ud{\omega}\ud{z^*}. \numberthis\label{eq:2 nabla omega square}
    \end{align*}
    Finiteness of the Fisher information and the control on the moments shows that the boundary term at $r \to +\infty$ vanishes. At $r \to 0$, the boundary term vanishes as well, since, in particular, $\gamma > -3$.
    
    We then complete the square by combining \eqref{eq:J1-11} with  \eqref{eq:-1 nabla omega nabla r cdot nabla omega} and \eqref{eq:2 nabla omega square};
    \begin{multline*}\label{eq:complete the square}
        r^{2+\gamma} \abs{\snabla\partial_r \log\tests}^2 + \gamma r^{1+\gamma} \snabla\partial_r \log\tests \cdot \snabla \log\tests + r^{\gamma} \abs{\snabla \log\tests}^2 \\
            = r^{2+\gamma}\abs{\snabla \partial_r \log\tests + \frac{\gamma}{2r}\snabla \log\tests}^2 + \left(1 - \frac{\gamma^2}{4}\right)r^\gamma\abs{\snabla \log\tests}^2.
    \end{multline*}
    Inserting this into \eqref{eq:J1-11}, we obtain
    \begin{align*}
        \frac{1}{2c_{ij}\aij^3}J_1 
            = &- \int_{\R^3}\int_0^\infty \int_{\S^2} r^{2+\gamma}\abs{\snabla \partial_r \log\tests + \frac{\gamma}{2r}\snabla \log\tests}^2 \tests\, \ud{\omega}\ud{r}\ud{z^*} \\
                &- \int_{\R^3}\int_0^\infty \int_{\S^2} r^\gamma\left(\Gamma_2\left(\log\tests, \log\tests\right) -\frac{\gamma^2}{4}\abs{\snabla \log\tests}^2\right) \tests\, \ud{\omega}\ud{r}\ud{z^*},
    \end{align*}
    recalling the iterated carré du champ $\Gamma_2$ on for the spherical Laplacian on $\S^2$ given by \eqref{eq:Gamma 2}.
    
    Unfortunately, $\ufij(r, \cdot, z^*) \eqqcolon g(r, \cdot, z^*)$ is not necessarily symmetric on $\S^2$ for $j \ne N+i$, hence the optimal constant in the log-Sobolev inequality is $\Lambda_3 = 2$ in that case (see e.g. \cite[Remark 1.4]{Ji24});
    \begin{equation*}
        \int_{\S^2} \Gamma_2(\log \tests, \log \tests)\tests \,\ud{\omega} \ge \Lambda_3\int_{\S^2} \abs{\snabla \log\tests}^2\tests \,\ud{\omega}.
    \end{equation*}
    In the case of symmetry, that is $j = N+i$, then the optimal constant becomes $\Lambda_3^{\mathrm{sym}} \ge 5.5$ \cite[Theorem 1.1]{Ji24}.
\end{proof}
\begin{remark}
    Without invoking the log-Sobolev inequality, the sign of the dissipation would remain unclear; indeed, from the proof of \cref{prop:coercivity}, it follows that
\begin{align*} 
    D_{\tI_{ij}, \tQ_{ij}}\fij &= 2c_{ij}\aij^3 \int_{({\R^3})^{2}} |z|^{2+\gamma} \abs{\Pperp{z}\gradz\left(\hz \cdot \gradz \log\ofij\right) + \frac{\gamma}{2|z|}\Pperp{z}\gradz \log\ofij}^2 \ofij\,\ud{z}\ud{z^*}\\
        &\quad + 2c_{ij}\aij^3 \int_{({\R^3})^{2}} |z|^{2+\gamma} \left(\norm{\left(\Pperp{z}\gradz\right)^2 \log\ofij}^2_{\mathrm{HS}} - \frac{\gamma^2}{4|z|^2}\abs{\Pperp{z}\gradz \log\ofij}^2 \right) \ofij\,\ud{z}\ud{z^*} \\
        &\quad + 2c_{ij}\frac{\aij^2}{\Mij} \int_{(\R^3)^2} \gradzs \gradz \log\ofij : \left(\gradzs \gradz \log\ofij \Az\right) \ofij\,\ud{z}\ud{z^*}, \numberthis\label{eq:J2 z z star in dissipation equality}
\end{align*}
where in the second line we used \eqref{eq:Gamma 2 HS}. In the original variables
\begin{equation*}
    v_i = \frac{m_j}{\Mij} z + z^*, \quad v_j = -\frac{m_i}{\Mij} z + z^*, \quad \gradz \mapsto \frac{1}{\aij}\notbbnabla\,, \quad \gradzs \mapsto \nablaijplus,
\end{equation*}
and thus
\begin{align*}
    D_{\tI_{ij}, \tQ_{ij}}\fij &= 2\frac{c_{ij}}{\aij} \int_{({\R^3})^{2}} |v_i-v_j|^{2+\gamma} \left|\Pperp{v_i - v_j}\notbbnabla\left(\frac{v_i - v_j}{|v_i - v_j|} \cdot \notbbnabla \log\fij\right)\right. \\
        &\hphantom{= 2\frac{c_{ij}}{\aij} \int_{({\R^3})^{2}} |v_i-v_j|^{2+\gamma}} \quad + \left.\frac{\aij \gamma}{2|v_i - v_j|}\Pperp{v_i - v_j}\notbbnabla \log\fij\right|^2 \fij\\
        &\quad + 2\frac{c_{ij}}{\aij} \int_{({\R^3})^{2}} |v_i - v_j|^{2+\gamma} \left(\norm{\left(\Pperp{v_i - v_j}\notbbnabla\, \right)^2 \log\ff}^2_{\mathrm{HS}}\right. \\
        &\hphantom{\quad + 2\frac{c_{ij}}{\aij} \int_{({\R^3})^{2}} |v_i - v_j|^{2+\gamma}} \quad - \left.\frac{\aij^2 \gamma^2}{4|v_i - v_j|^2}\abs{\Pperp{v_i - v_j}\notbbnabla \log\fij}^2 \right) \fij \\
        &\quad + 2\frac{c_{ij}}{\Mij} \int_{(\R^3)^2} \nablaijplus \notbbnabla \log\fij \\
        &\hphantom{\quad + 2\frac{c_{ij}}{\Mij} \int_{(\R^3)^2} \nablaijplus } : \left(\nablaijplus \notbbnabla \log\fij \Aij\right) \fij.
\end{align*}
\end{remark}

\subsection{Local well-posedness} \label{subsec:local}

To derive global-in-time well-posedness of the multi-species Landau equation from the monotonicity of the Fisher information, we first establish local well-posedness. The proof is similar to the single-species case, and therefore we are brief on some of the details.

\begin{lemma}[Local well-posedness] \label{lem:local_well_posedness}
    Let $l\geq 5$ and $f^\circ_i\in L^2_l\cap L\log L$ be probability densities for $1\leq i \leq N$. Then for some $T>0$ depending only on $\|f_i^\circ\|_{L^2_l}$, $\mathcal{H}(F_N^\circ)$,  there exists a unique strong solution $f_i\in L^\infty([0,T],  L^2_l)\cap C^\infty((0,T)\times \R^3)$ to the system
    \begin{align} 
	\partial_t f_i &= \sum_{j=1}^N Q_{ij}(f_i,f_j), \quad f_i(0,v)= f_i^\circ(v), 
    \end{align}
    which satisfies
    \begin{align}
        \sup_{t\in [0,T]} \sum_{i=1}^N\| f_i(t)\|^2_{L^2_l} + c \sum_{i=1}^N\int_0^T \| f_i\|^2_{H^1_{l+\gamma}} &\leq C_T \sum_{i=1}^N \| f_i^\circ\|^2_{L^2_l}.
    \end{align}
    Moreover, the entropy is non-increasing, 
    \begin{align}\label{def:DDissipation}
        \frac{\d}{\d t} \mathcal H (F_N) &= -\mathcal{D}(F_N) =  - \sum_{i,j=1}^N D_{ij}(f_i,f_j),
    \end{align} where 
    \begin{equation*}
         D_{ij}(f_i,f_j) \coloneqq -\frac12 \int_{(\R^3)^2} \left(\left( \Pi^\perp_{v_i-v_j} |v_i-v_j|^{2+\gamma}\notbbnabla  \log \fij\right) \cdot \notbbnabla  \log \fij\right) \fij \ud{v_i} \ud{v_j} . 
    \end{equation*}
\end{lemma}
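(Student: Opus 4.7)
The plan is to follow the standard regularize--estimate--pass-to-the-limit scheme familiar from the single-species Landau theory (e.g.~\cite{Vi98-2, DV00-2}), adapted to the coupled $N$-species system with special attention to the soft-potential range $\gamma\in[-3,-2]$. First, I truncate the kernel by replacing $|v_i-v_j|^{2+\gamma}$ by a smooth bounded cutoff $\chi_R(|v_i-v_j|)\,|v_i-v_j|^{2+\gamma}$ and mollify the initial data. For this regularized system the coefficients are smooth and bounded, so a Banach contraction argument in $C([0,\tau]; (L^2_l)^N)$ produces a smooth local approximate solution $F^R=(f_i^R)_{i=1}^N$ on an interval depending on $R$.

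Second, I would derive a priori estimates that are uniform in $R$. Testing the $i$-th equation by $f_i\langle v\rangle^{2l}$, summing over $i$, and using the antisymmetry of $\nablaijminus$ under the swap $(v_i,v_j)\mapsto(v_j,v_i)$ yields a weak-form identity whose leading term is
\begin{equation*}
    \frac12\frac{\d}{\d t}\sum_{i=1}^N\|f_i\|_{L^2_l}^2 + \sum_{i,j=1}^N \frac{c_{ij}}{\mi^2}\int_{(\R^3)^2} \left(\Aij\,\gradvi f_i\cdot\gradvi f_i\right) f_j\,\langle v_i\rangle^{2l}\ud v_i\ud v_j = \mathcal R,
\end{equation*}
where $\mathcal R$ collects the lower-order terms coming from $\gradvi\langle v_i\rangle^{2l}$ and from the $\gradvj$-piece of $\nablaijminus$. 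The Desvillettes--Villani ellipticity lemma --- which ensures $\int \Aij f_j\,\ud v_j\gtrsim \langle v_i\rangle^{\gamma}\Id$ once $f_j$ has controlled mass, energy, and entropy --- together with the a priori propagation of moments in $L^1_{2\ell}$, converts this into the Gronwall-type inequality
\begin{equation*}
    \frac{\d}{\d t}\sum_i \|f_i\|^2_{L^2_l} + c\sum_i \|f_i\|^2_{H^1_{l+\gamma}} \leq C\Bigl(1+\sum_i \|f_i\|_{L^2_l}^2\Bigr)^{\kappa},
\end{equation*}
for some $\kappa=\kappa(\gamma,l)$. Meanwhile the entropy identity \eqref{def:DDissipation} is obtained by testing each equation against $1+\log f_i$, summing over $i$, and symmetrizing every pair $Q_{ij}+Q_{ji}$ by $(v_i,v_j)\leftrightarrow(v_j,v_i)$: the identities $c_{ij}=c_{ji}$ and $A(z)=A(-z)$ then produce exactly $-2D_{ij}(f_i,f_j)$, which is non-positive because $A\succeq 0$.

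Third, the uniform $L^\infty_t L^2_l\cap L^2_t H^1_{l+\gamma}$ bounds combined with time-equicontinuity from the equation itself yield enough strong compactness to pass $R\to\infty$ and obtain a weak solution. The $C^\infty$-smoothing for $t>0$ follows by a parabolic bootstrap: the diffusion matrices $a^{ij}(t,v_i)=\int c_{ij}\Aij f_j(t,v_j)\ud v_j$ are uniformly elliptic on compact sets thanks to the mass-energy-entropy control, and De~Giorgi--Nash--Moser regularity propagated along the lines of~\cite{GG18, Si17, Wu14} gives $C^\infty$-smoothness, with higher $L^p$ moments propagated by the same weighted estimate. Uniqueness follows from computing $\frac{\d}{\d t}\sum_i\|f_i-\tilde f_i\|_{L^2_{l'}}^2$ for two solutions sharing the same initial datum and closing the Gronwall inequality using the $H^1_{l+\gamma}$-control of one of them.

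The principal obstacle is the soft-potential range $\gamma\in[-3,-2]$: when $\gradvi$ hits the weight $\langle v_i\rangle^{2l}$ in the energy estimate, the generated lower-order terms carry positive powers of $\langle v\rangle$ that must be absorbed into the $H^1_{l+\gamma}$-coercive leading term via interpolation between $L^2_l$ and $H^1_{l+\gamma}$. The threshold $l\geq 5$ is precisely what allows this absorption (compatibly with the polynomial moment bounds propagated by the Landau operator within each pair, thanks to conservation of momentum and energy at the pair level) and is therefore the source of the stated weight requirement.
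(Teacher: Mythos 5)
Your proposal follows essentially the same route as the paper: a standard regularization scheme, a weighted $L^2$ energy estimate whose leading coercive term is bounded below via the Desvillettes--Villani ellipticity bound for $\mathcal A_{ij}[f_j]$ (controlled through the conserved mass/energy and non-increasing entropy), interpolation to absorb the lower-order commutator terms and close a Gronwall-type inequality, and the entropy dissipation identity obtained by pairwise symmetrization using $c_{ij}=c_{ji}$ and $A(z)=A(-z)$. Two cosmetic slips worth noting: with the paper's convention $\|f\|^2_{L^2_l}=\int f^2\langle v\rangle^l\ud v$ the test function should be $f_i\langle v\rangle^l$ rather than $f_i\langle v\rangle^{2l}$, and the $(v_i,v_j)\leftrightarrow(v_j,v_i)$ swap is what you use for the entropy identity, not in deriving the $L^2$ energy inequality (where one simply integrates by parts in $v_i$ for each fixed pair $(i,j)$).
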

\begin{proof}
    The existence of the solution follows by a standard regularization argument, along with the conservation of mass, total momentum and energy. For the regularity estimate, we consider the time derivative
    \begin{align*}
        \frac{\d}{\d t} \frac12 \int_{\R^3} f_i^2(v) \langle v\rangle^{l}\, \ud{v} &= \sum_{j=1}^N \int_{\R^3} f_i(v)\, Q_{ij}(f_i,f_j)\, \langle v\rangle^{l}\, \ud{v}.
    \end{align*}
    We then estimate each term in the sum separately. By definition of $Q_{ij}$, we have
    \begin{align*}
        \int_{\R^3} f_i(v) &Q_{ij}(f_i,f_j) \langle v\rangle^{l}\, \ud{v} \\
        &\, = \int_{\R^3} f_i(v_i) \, \frac{\gradvi}{\mi}\, \cdot \left( \int_{\R^3} c_{ij}\, A(v_i-v_j)\,
        \nablaijminus
        \fij(v_i,v_j)\,\ud v_j \right) \langle v_i\rangle^{l}\, \ud{v_i}\\
    & \, =-\int_{\R^3}   
        \mathcal{A}_{ij} [f_j] : \left(\frac{{\nabla f_i}}{\mi}\right)^{\otimes 2} \langle v\rangle^{l}\, \ud{v}  -\int_{\R^3}   
        \mathcal{A}_{ij} [f_j] : \left(\frac{\nabla f_i}{\mi} \otimes \frac{\nabla \left(\langle v\rangle^{l}\right)}{\mi}\right)f_i\, \ud{v} \\
        &\quad\, + \int_{\R^3}  
        \left(\frac{\nabla}{\mj} \cdot \mathcal{A}_{ij} [f_j]\right) \cdot  \frac{\nabla}{\mi}\left(f_i\, \langle v\rangle^{l}\right) f_i\, \ud{v}\\
    & \, \eqqcolon  -\text{I}_{ij} + \text{II}_{ij} + \text{III}_{ij}.
    \end{align*}
    where the operator $\mathcal{A}_{ij}$ is defined by
    \begin{align*}
        \mathcal{A}_{ij}[g](v) \coloneqq \int_{\R^3} c_{ij} \Pi_{v-v^*}^\perp |v-v^*|^{2+\gamma} g(v^*) \d v^*.
    \end{align*}
    Since $f_j$ are probability densities whose energy and entropy are bounded by the total energy and entropy of the system, we have the classical estimate, see \cite{DV00},
    \begin{align*}
        \text{I}_{ij} \geq c \int_{\R^3} |\nabla f_i(v)|^2 \langle v \rangle^{ l +\gamma} \,\ud{v},
    \end{align*}
    where $c>0$ depends on the total energy and entropy of the initial data. Using standard interpolation estimates, making use of $l\geq 5$, we bound
    \begin{align*}
        |\text{II}_{ij}| + |\text{III}_{ij}| \leq \frac{1}{4} \sum_{j=1}^N\text{I}_{ij} + C \big(1+\sum_{j=1}^N \| f_j\|_{L^2_l} \big)^\beta \| f_i\|^2_{L^2_l},
    \end{align*}
    where $\beta>0$ is a constant depending on $\gamma$. For the quantity
    \begin{align*}
        y(t) = \sum_{i=1}^N \int_{\R^3} f_i^2(v) \langle v\rangle^{l} \ud{v},
    \end{align*}
    this yields the differential inequality
    \begin{align*}
        \frac{\d}{\d t} y(t) + \frac{3}{4} \sum_{i,j=1}^N \text{I}_{ij} \leq C \langle y \rangle ^{1+\beta}(t).
    \end{align*}
    Hence, for some $T>0$ we obtain the claimed estimate.
\end{proof}

\begin{corollary}[Production of Fisher information]  \label{cor:Fisher}  Let $l\geq 7$ and the initial data $f^\circ_i\in L^2_l\cap L\log L$ be probability densities for $1\leq i \leq N$, and $f_i\in L^\infty([0,T],  L^2_l)\cap C^\infty((0,T)\times \R^3)$ be the solution from~\cref{lem:local_well_posedness}. Then $t\in(0,T)$ a.e. we have for all $1\leq i\leq N$,
\begin{align*}
    \| f_i\|_{H^2_k} \leq C_T<\infty, \qquad \text{and} \qquad 
    \mathcal{I}(f_i) \leq C_T< \infty,
\end{align*}
in particular the Fisher information $\mathcal{I}(F)$ is finite.
\end{corollary}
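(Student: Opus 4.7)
The plan is to upgrade the $L^2_l$ bound of \cref{lem:local_well_posedness} to weighted $H^s$ estimates for arbitrarily large $s$ via a parabolic bootstrap, and then extract the Fisher information bound from the resulting regularity.

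First, I would differentiate the system $\partial_t f_i = \sum_j Q_{ij}(f_i, f_j)$ in $v$ and test the equation satisfied by $\partial^\alpha f_i$ against $\partial^\alpha f_i \langle v\rangle^{2l}$, proceeding exactly as in the proof of \cref{lem:local_well_posedness}. The diffusion coefficient $\mathcal{A}_{ij}[f_j]$ still provides the coercive term $c\|\partial^\alpha f_i\|_{H^1_{l+\gamma}}^2$, while the commutators and source terms arising from differentiating $\mathcal{A}_{ij}[f_j]$ only involve lower-order derivatives of $f_i$ and moments of $f_j$, which are already under control. Introducing the time weight $t^{2|\alpha|}$ to compensate the absence of higher initial regularity and arguing inductively on $|\alpha|$, one obtains, for every $s \in \mathbb{N}$ and some $k = k(s,l)$,
\begin{equation*}
    \sup_{t \in (t_0, T)} \|f_i(t)\|_{H^s_k} \le C(t_0, T, s), \qquad t_0 > 0.
\end{equation*}
In particular, choosing $s$ large enough, Sobolev embedding gives $\|D^2 f_i(t)\|_{L^\infty} \le C_T$, and the desired $\|f_i(t)\|_{H^2_k} \le C_T$ holds for a.e.\ $t \in (0,T)$.

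To convert this regularity into a Fisher information bound, I would use the classical pointwise inequality $|\nabla f(v)|^2 \le 2 f(v)\, \|D^2 f\|_{L^\infty}$, valid for any smooth non-negative $f$ (a consequence of the non-negativity of the second-order Taylor expansion around $v$). Splitting the Fisher integral at a radius $R > 0$, the inner region gives
\begin{equation*}
    \int_{|v| \le R} \frac{|\nabla f_i|^2}{f_i}\,\ud v \le 2 |B_R|\,\|D^2 f_i\|_{L^\infty} \le C_T R^3,
\end{equation*}
while the tail $|v| > R$ is controlled using the fast polynomial decay of $|\nabla f_i|$ from the weighted $H^s_k$ bound against the strict positivity of $f_i$, which follows from the strong parabolic maximum principle applied to the Landau equation with the smooth coefficients produced in Step~1. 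Choosing $R$ large enough so that the polynomial decay dominates, the tail integral is finite and one obtains $\IFish{f_i} \le C_T$.

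The main obstacle is the bootstrap of Step~1: the cross-interaction operators $Q_{ij}$ couple different species nontrivially, so that the commutators produced by differentiating $\mathcal{A}_{ij}[f_j]$ must be controlled via interpolation against moments and absorbed into the coercive diffusion term. The hypothesis $l \ge 7$ gives precisely the moment margin needed to close the induction, exactly as for the single-species Landau equation (cf.~\cite{DV00, GS25}).
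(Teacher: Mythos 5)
Your overall plan---upgrade the energy estimate to higher regularity, then convert the $H^2_k$ bound into a Fisher information bound---matches the paper's, but you use a heavier regularity step than needed and your final conversion has a genuine gap.

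On regularity, the paper stops after one additional energy estimate: it differentiates the equation once, tests $\partial_k f_i$ against $\partial_k f_i \langle v\rangle^l$, obtains coercivity from the lower bound on $\mathcal{A}_{ij}[f_j]$, and concludes $\int_0^T \|\nabla^2 f_i\|^2_{L^2_{l+\gamma}} \,\d t \le C_T$, which gives $\|f_i(t)\|_{H^2_k} < \infty$ for a.e.\ $t\in(0,T)$. Your full parabolic bootstrap with time weights $t^{2|\alpha|}$ would also work and in fact gives more (uniform $C^\infty$ bounds for $t\ge t_0>0$), but it is unnecessary for the statement being proved and somewhat obscures where the hypothesis $l \ge 7$ actually enters.

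The real problem is the tail estimate in your Fisher-information step. You invoke ``fast polynomial decay of $|\nabla f_i|$ against the strict positivity of $f_i$, which follows from the strong parabolic maximum principle.'' Strict positivity gives you no quantitative lower bound at infinity; for the Landau equation the best one can expect is a Gaussian lower bound $f_i(t,v) \gtrsim e^{-\alpha|v|^2}$, and dividing a polynomially-decaying $|\nabla f_i|^2$ by such a quantity produces a ratio that \emph{grows} as $|v|\to\infty$, so the tail integral is not controlled this way. No lower bound on $f_i$ is actually needed: the Taylor/non-negativity argument you already use in the inner region can be localized to balls $B_{\rho}(v)$ with $\rho\sim|v|$, yielding
\[
\frac{|\nabla f_i(v)|^2}{f_i(v)} \;\lesssim\; \sup_{B_{\rho}(v)}|D^2 f_i| + \frac{f_i(v)}{\rho^2},
\]
and then the weighted $L^\infty$ decay of $D^2 f_i$ from your $H^s_k$ bounds makes the tail integrable. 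The paper sidesteps all of this by citing the inequality $\IFish{f}=4\|\sqrt f\,\|^2_{\dot H^1} \le C_k \|f\|_{H^2_k}$ from \cite{To00}, which replaces your Steps 2--3 with a single line and is the cleaner route. As written, your argument needs to be repaired either by the localized pointwise inequality above or by invoking that result; the maximum-principle positivity plays no useful role.
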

\begin{proof}
    \cref{lem:local_well_posedness} shows that almost everywhere in $(0,T)$ we have 
    \begin{align*}
        \| \nabla f_i(t^*)\|_{L^2_{l+\gamma}} < \infty, \quad \text{for all } 1\leq i\leq N. 
    \end{align*}
    We now derive energy estimates for $\partial_k f_i$ for any $k\in \{1,2,3\}$. The equation for the partial derivative reads
    \begin{align*}
        \partial_t \partial_k f_i = \sum_{i=1}^N Q_{ij}(\partial_k f_i, f_j) + Q_{ij}( f_i, \partial_k f_j).
    \end{align*}
    We apply the same argument as in~\cref{lem:local_well_posedness}, which yields
    \begin{align*}
        \frac12 \frac{\d}{\d t} \int_{\R^3} (\partial_k f_i)^2(v) \langle v \rangle^ {l} \d v &= \sum_{j=1}^N\int_{\R^3} \partial_k f_i(v) \left( Q_{ij}(\partial_k f_i,f_j) + Q_{ji} (f_i,\partial_k f_j)\right)\langle v \rangle^ {l} \d v.
    \end{align*}
    As before, we can separate the contributions from the individual species $1\leq j\leq N$. Using the notation from above, we obtain
    \begin{align*}
        \int_{\R^3} \partial_k f_i(v) \left( Q_{ij}(\partial_k f_i,f_j) \right. &+ \left.Q_{ji} (f_i,\partial_k f_j)\right)\langle v \rangle^ {l} \d v = - \int_{\R^3}   \frac{\nabla \partial_kf_i}{m_i} \mathcal A [f_j] \frac{\nabla \partial_kf_i}{m_i} \langle v_i \rangle^{l}  \d v_i \\
        &-\int_{\R^3}   \frac{\nabla \partial_kf_i}{m_i} \mathcal A [f_j] \partial_kf_i \frac{\nabla }{m_i}\langle v_i \rangle^{l}  \d v_i +\int_{\R^3}    \partial_kf_i \frac{\nabla_{v_j}}{m_j} \cdot \mathcal A [f_j] \frac{\nabla }{m_i} \big(\partial_kf_i \langle v_i \rangle^{l} \big)  \d v_i \\
        &-\int_{\R^3}   \frac{\nabla f_i}{m_i} \mathcal A [\partial_ kf_j]  \frac{\nabla }{m_i}\big( \partial_k f_i\langle v_i \rangle^{l} \big) \d v_i +\int_{\R^3}   f_i\frac{\nabla_{v_j}}{m_j} \cdot \mathcal A [\partial_ kf_j]  \frac{\nabla }{m_i}\big( \partial_k f_i\langle v_i \rangle^{l} \big) \d v_i\\
        &\eqqcolon - \text{I}_{ij} + \text{II}_{ij}+ \text{III}_{ij}+ \text{IV}_{ij}+ \text{V}_{ij}.
    \end{align*}
    We again use the lower bound for the matrix $A$ in terms of total energy and entropy, and obtain the bound
    \begin{align*}
        \text{I}_{ij} \geq c \| \partial_k \nabla f_i \|^2_{L^2_{l+\gamma}}.
    \end{align*}
    The remaining terms can be bounded by straightforward interpolation arguments to get the total bound
    \begin{align*}
        \partial_t \sum_{i=1}^N \|\nabla f_i\|^2_{L^2_l} + c \sum_{i=1}^N \|\nabla^2 f_i\|^2_{L^2_{l+\gamma}} \leq C \big( 1+ \sum_{i=1}^N \|\nabla f_i\|^2_{L^2_l} \big)^{1+\beta},
    \end{align*}
    for some constant $\beta>0$. The upper bound for the solution of the corresponding ODE yields, after possibly choosing a smaller $T>0$, the bound
    \begin{align*}
        \int_0^T \| \nabla^2 f_i\|^2_{L^2_{l+\gamma}} \d t \leq C_T < \infty.
    \end{align*}
    To conclude the finiteness of the Fisher information, we recall that for any $k>3$ we can estimate
    \begin{align*}
        I(f) = 4 \| \sqrt{f}\|^2_{\dot{H}^1} \leq C_k  \| f\|_{H^2_k} ,
    \end{align*}
    see~\cite[Lemma~1]{To00} (observe the slightly different convention for $\|\cdot \|_{L^2_l}$ therein). This concludes the proof.
\end{proof}
This local result is complemented by a control of the growth of the moments of the solution. 

\begin{lemma} [Growth bound for the moments] \label{lem:Moments}
        Let $l\in\mathbb{N}$ and the initial data $f^\circ_i\in L^2_l\cap L^1_{2l}\cap L\log L$ be probability densities and let $F_N = \lp f_i \rp_{i=1}^N$ be the unique local strong solution from~\cref{lem:local_well_posedness}. Then $f_i(t)$ are probability densities and the total momentum and total energy are conserved,
        \begin{align*}
            M(t) &\coloneqq \sum_{i=1}^N \int_{\R^3} \mi v\, f_i(t,v) \, \ud{v} = M(0), \\
            E(t) &\coloneqq \sum_{i=1}^N \int_{\R^3} \frac12 \mi |v|^2\, f_i(t,v) \, \ud{v} = E(0).
        \end{align*}
    The higher-order moments satisfy the  growth estimate
    \begin{align} \label{eq:momentbound}
        \| f_i\|_{L^1_l} \leq C_l \sum_{i=1}^N\| f_i^\circ\|_{L^1_l} + C_lt^{1+\lceil{\frac{l-4}{2}\rceil}} .
    \end{align}
\end{lemma}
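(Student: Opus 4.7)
The plan is to first establish the conservation identities from the symmetric weak form of the multi-species collision operator, and then to derive the polynomial moment growth by testing against $\langle v\rangle^l$ and running an induction on $l$ seeded by the conserved kinetic energy.

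For the conservation laws, I would use $c_{ij}=c_{ji}$ together with $A(v_i-v_j)=A(v_j-v_i)$ to pair the $(i,j)$ and $(j,i)$ contributions in $\sum_{i,j}\int Q_{ij}\psi_i\,\ud v_i$, yielding the symmetric weak identity
\begin{align*}
\sum_i\int Q_i\,\psi_i\,\ud v_i &= -\frac12\sum_{i,j}\int c_{ij}A(v_i-v_j)\nablaijminus\fij\cdot \nablaijminus\bigl[\psi_i(v_i)+\psi_j(v_j)\bigr]\ud v_i\,\ud v_j.
\end{align*}
Taking $\psi_i\equiv 1$ already gives $\int Q_{ij}(f_i,f_j)\,\ud v_i=0$ directly from the divergence form, so each $f_i$ remains a probability density. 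For $\psi_i(v)=m_iv^k$, the combination $\nablaijminus[m_iv_i^k+m_jv_j^k]=e_k-e_k=0$, and for $\psi_i(v)=\tfrac12 m_i|v|^2$ it equals $v_i-v_j$, which lies in the kernel of $\Pperp{v_i-v_j}$ and hence of $A(v_i-v_j)$; this establishes conservation of total momentum and total energy.

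For the moment bound, I would test against $\psi(v)=\langle v\rangle^l$ and perform two integrations by parts in the Landau--Rosenbluth form of $Q_{ij}$ to obtain
\begin{align*}
\int Q_{ij}(f_i,f_j)\langle v_i\rangle^l\,\ud v_i
&=\frac{c_{ij}}{m_i^2}\!\int \fij\,A(v_i-v_j):\nabla^2\langle v_i\rangle^l \\
&\quad-\frac{2\aij c_{ij}}{m_i}\!\int \fij\,(v_i-v_j)|v_i-v_j|^{\gamma}\cdot\nabla\langle v_i\rangle^l.
\end{align*}
Using the elementary bounds $|\nabla\langle v\rangle^l|\lesssim l\langle v\rangle^{l-1}$, $|\nabla^2\langle v\rangle^l|\lesssim C_l\langle v\rangle^{l-2}$, together with $\|A(z)\|\leq 2|z|^{2+\gamma}$, the integrand is dominated pointwise by $C_l\bigl(|v_i-v_j|^{2+\gamma}\langle v_i\rangle^{l-2}+|v_i-v_j|^{1+\gamma}(|v_i|+|v_j|)\langle v_i\rangle^{l-1}\bigr)$. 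The convolution $\int|v_i-v_j|^{2+\gamma}f_j(v_j)\,\ud v_j$ is then controlled by splitting the $v_j$-integral at $|v_i-v_j|=1$: on the near-diagonal part I use the $L^\infty$-bound on $f_j$ obtained from $f_j\in H^2_k\hookrightarrow L^\infty(\R^3)$ via \cref{cor:Fisher}, while on the far region the pointwise estimate $|v_i-v_j|^{2+\gamma}\leq C(\langle v_i\rangle^{2+\gamma}+\langle v_j\rangle^{2+\gamma})$ combined with the conserved energy suffices. Collecting these estimates yields the recursive differential inequality
\begin{align*}
\frac{\d}{\d t}\sum_{i=1}^N\|f_i\|_{L^1_l} &\leq C_l\Bigl(1+\sum_{i=1}^N\|f_i\|_{L^1_{l-2}}\Bigr).
\end{align*}

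Finally, since energy conservation gives $\sum_i\|f_i\|_{L^1_2}(t)\equiv\sum_i\|f_i^\circ\|_{L^1_2}$, iterating the above inequality by induction on $\lceil(l-2)/2\rceil$ produces the claimed polynomial bound with growth exponent $1+\lceil(l-4)/2\rceil$. The main obstacle I expect is the careful handling of the singular factor $|v_i-v_j|^{2+\gamma}$ in the very soft regime $\gamma<-2$, where it cannot be dominated by polynomial velocity weights alone; this is precisely where the quantitative smoothness from \cref{lem:local_well_posedness} and \cref{cor:Fisher} enters, providing the $L^\infty$-control needed on the near-diagonal region.
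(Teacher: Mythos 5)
Your conservation‐law argument is correct and in fact more explicit than the paper's one-line remark: the pairing of $(i,j)$ and $(j,i)$ contributions via the symmetric weak form, and the observation that $\nablaijminus[m_iv_i^k+m_jv_j^k]=0$ and that $v_i-v_j\in\ker\Pperp{v_i-v_j}$, is exactly the right structure.

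The moment estimate, however, contains a genuine gap that departs from the paper's method. To control the near-diagonal singularity $|v_i-v_j|^{2+\gamma}$ with $\gamma<-2$, you invoke an $L^\infty$ bound on $f_j$ coming from the $H^2_k$ control of \cref{cor:Fisher}. That bound takes the form $\|f_j\|_{H^2_k}\leq C_T$ on a \emph{local} interval $(0,T)$, and $C_T$ is not known to grow polynomially in $T$; in fact, until the Fisher information has been shown to be monotone \emph{and} combined with a time-uniform moment bound, $C_T$ could a priori blow up. So feeding that $L^\infty$ bound into your differential inequality produces $C_l$ depending implicitly on $T$, which does not yield the claimed uniform polynomial growth $\|f_i\|_{L^1_l}\lesssim 1+t^{1+\lceil(l-4)/2\rceil}$, and creates circularity with the global argument that this lemma is supposed to feed into. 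The paper avoids this by never using $L^\infty$ control: near the diagonal it applies Cauchy--Schwarz against the entropy dissipation $\mathcal{D}_{ij}(f_i,f_j)$, obtaining a term of the schematic form $\mathcal{D}_{ij}\,M_l + C\,M_{l-4}$. Since $\int_0^T\mathcal{D}(F_N)\,\ud t$ is bounded purely in terms of the initial entropy and the conserved energy (entropy is monotone and bounded below), a Gr\"onwall argument gives the polynomial-in-time growth with constants depending only on the initial data. The other structural step you are missing is the integration by parts on the far-field part of the cross terms: the leading contribution there is $(v_i-v_j)\cdot\bigl(\nabla\langle v_i\rangle^l-\nabla\langle v_j\rangle^l\bigr)\geq 0$, which enters with a negative sign and can simply be dropped, leaving only the benign $M_{l-2}$ term. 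In short: your outer scheme (test against $\langle v\rangle^l$, iterate on $l$ seeded by energy conservation) matches the paper, but replacing the entropy-dissipation absorption of the singular kernel by an $L^\infty$ estimate is not merely a stylistic variant — it breaks the uniformity in $t$ that the lemma asserts.
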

\begin{remark}
    For $l\leq 4$, the estimate~\eqref{eq:momentbound} coincides with the bound for the single species case, see~\cite{CM17}. On the other hand, for $l>4$ the estimates are likely not sharp.
\end{remark}

\begin{proof}[Proof of~\cref{lem:Moments}]
    Conservation of total mass and momentum follows from the fact that these quantities are preserved by each collision operator $Q_{ij}$. The growth bound for higher-order moments requires some care due to the asymmetry of the multi-species equation. Nevertheless, we can largely follow the strategy of~\cite{CM17}. Let $l\geq 0$, and consider the \emph{total} $l$-th moment
    \begin{align} \label{eq:lMoment}
        M_l(t) = \sum_{i=1}^N \int_{\R^3} m_i \langle v\rangle^l f_i(v)\, \ud{v},  
    \end{align}
    and its time derivative 
    \begin{align*}
        \frac{\d}{\d t} M_l(t) &= \sum_{i,j=1}^N \int_{\R^3} m_i \langle v\rangle^l Q_{ij}(f_i,f_j) \, \ud{v}  \\
        &= \sum_{i=1}^N \int_{\R^3} m_i \langle v\rangle^l\, Q_{ii}(f_i,f_i)\, \ud{v}  + \sum_{i<j} \int_{\R^3} m_i \langle v\rangle^l Q_{ij}(f_i,f_j)+ m_j \langle v\rangle^l Q_{ji}(f_j,f_i)\, \ud{v} \\
        & \eqqcolon \sum_{i=1}^N K_i + \sum_{i<j} K_{ij}.
    \end{align*}
    The terms $K_i$ can be treated as in \cite[Lemma 8]{CM17}, yielding for some $C>0$ 
    \begin{align*}
        \sum_{i=1}^N K_i \leq \mathcal{D}(F_N) M_l + C M_{l-4} +C ,
    \end{align*}
    where $\mathcal{D}$ is the dissipation functional introduced in~\eqref{def:DDissipation}. 
    It remains to estimate the cross-interaction terms $K_{ij}$. Due to the $m_i$-weights in~\eqref{eq:lMoment}, the terms $K_{ij}$, $i\neq j$, take the form
    \begin{align*}
        K_{ij} = - \int_{\R^3\times \R^3} (f_i\otimes f_j ) |v_i-v_j|^{2+\gamma} \left(\Pi^\perp_{v_i-v_j} \notbbnabla \log (f_i \otimes f_j)\right) \cdot \left(\nabla \langle v_i \rangle^l - \nabla  \langle v_j \rangle^l  \right) \d v_i \d v_j.
    \end{align*}
    As in the single-species case, we introduce a smooth, radially symmetric cutoff function $\chi\in C^\infty_c$ satisfying $\1_{B_\frac12} \leq \chi \leq \1_{B_1}$.
  
    We then decompose $K_{ij}$ into a contribution near the singularity and a far-field part:
    \begin{align*}
        K_{ij} &=  - \int_{\R^3\times \R^3} (f_i\otimes f_j )\chi(v_i-v_j) |v_i-v_j|^{2+\gamma} \Pi^\perp_{v_i-v_j} \notbbnabla \log (f_i \otimes f_j) \cdot \left(\nabla \langle v_i \rangle^l - \nabla  \langle v_j \rangle^l  \right) \d v_i \d v_j  \\
        &\quad\, -\int_{\R^3\times \R^3} (f_i\otimes f_j )(1-\chi)(v_i-v_j) |v_i-v_j|^{2+\gamma} \Pi^\perp_{v_i-v_j} \notbbnabla \log (f_i \otimes f_j) \cdot \left(\nabla \langle v_i \rangle^l - \nabla  \langle v_j \rangle^l  \right) \d v_i \d v_j \\
        &=: K_{ij,1} + K_{ij,2}.
    \end{align*}
    The first term can be estimated exactly as in the single-species case, giving
    \begin{align*}
        |K_{ij,1}| \leq 2 \mathcal{D}_{ij} M_l + C l^{-\gamma} M_{l-4}.
    \end{align*}
    To treat the term $K_{ij,2}$, we integrate by parts and obtain
    \begin{align*}
        K_{ij,2} &= \int_{\R^3\times \R^3}  (f_i \otimes f_j)(1-\chi) |v_i-v_j|^{2+\gamma} \notbbnabla \big(\Pi^\perp_{v_i-v_j}   (\nabla \langle v_i \rangle^l - \nabla \langle v_j \rangle^l  \big)  \d v_i \d v_j \\
        &= - \alpha_{ij}\int_{\R^3\times \R^3}  (f_i \otimes f_j)(1-\chi) |v_i-v_j|^{\gamma}   (v_i-v_j)\cdot (\nabla \langle v_i \rangle^l - \nabla \langle v_j \rangle^l) \,\d v_i \d v_j\\
        & \quad +\int_{\R^3\times \R^3}  (f_i \otimes f_j)(1-\chi) |v_i-v_j|^{2+\gamma}   \Pi_{v_i-v_j}^\perp \colon \notbbnabla(\nabla \langle v_i \rangle^l - \nabla \langle v_j \rangle^l) \,    \d v_i \d v_j.
    \end{align*}
    Now the first term in $K_{ij,2}$ is non-positive since 
    \begin{align*}
        (v_i-v_j) (\nabla \langle v_i \rangle^l - \nabla \langle v_j \rangle^l) &\geq 0, 
    \end{align*} while the second term can be bounded by 
    \begin{align*}
        \int_{\R^3\times \R^3}  (f_i \otimes f_j)(1-\chi) |v_i-v_j|^{2+\gamma}   \Pi_{v_i-v_j}^\perp \colon \notbbnabla(\nabla \langle v_i \rangle^l - \nabla \langle v_j \rangle^l)    \d v_i \d v_j 
        \leq C  M_{l-2} 
    \end{align*}
    Collecting terms, we obtain
    \begin{align} \label{eq:momentdiff}
        \frac{\d}{\d t} M_l(t) \leq \mathcal{D}(F_N) M_l +C  M_{l-4} + C M_{l-2} \leq \mathcal{D}(F_N) M_l + C M_{l-2}.
    \end{align}
    For $l\leq 4$, the moment $M_{l-2}$ is uniformly bounded, and therefore
    \begin{align} \label{eq:momentproof}
        M_l(t) \leq C M_l(F_N^\circ) +Ct.
    \end{align}
    For the case $l>4$, it suffices to iteratively insert the estimates obtained from~\eqref{eq:momentproof} back into~\eqref{eq:momentdiff}.
\end{proof}

\subsection{Global well-posedness} \label{subsec:Thm1Proof}
\begin{proof}[Proof of~\cref{thm:Global}]
The global well-posedness follows by combining the results of the previous sections. Let $\ell\geq 7$ and $F_N^\circ$ be the initial datum. The local well-posedness result in~\cref{lem:local_well_posedness} yields a strong solution that can be continued as long as $\|f_i\|_{L^2_l}$ and $H(f_i)$ remain bounded. Since $\mc H(F_N)$ is monotone decreasing in time, it remains to provide a bound for the weighted $L^2$ norm. 

To this end, we observe that~\cref{cor:Fisher} yields finiteness of the Fisher information for small $t>0$, which is non-increasing in time due to~\cref{lem:linearity} and~\cref{prop:coercivity}. Furthermore,~\cref{lem:Moments} gives polynomial bounds for the moments $\|f\|_{L^1_{2l}}$. By interpolation we obtain
\begin{align*}
    \| f\|_{L^2_l} \leq  \|f\|^\frac12_{L^1_{2l}} \|f\|_{L^3}^\frac12 \leq C \|f\|^\frac12_{L^1_{2l}} I^\frac12(f),
\end{align*}
establishing the uniform bound and finishing the proof.
\end{proof}

\section{The two-species Landau system with one species having infinite mass} \label{Sec:Infinite}
\subsection{Dissipation identities and spherical Fisher information} \label{subsec:PrepInfinite}
As advertised in \cref{thm:FisherIncrease}, the standard Fisher information is not a Lyapunov functional for the multi-species Landau system in the limiting case of infinite mass, due to its lack of dissipation under the spherical diffusion operator
\begin{equation*}
    Lf(v) \coloneqq \nabla \cdot \left(|v|^{2+\gamma} \Pperp{v} \nabla f\right)
\end{equation*}
whenever $\gamma < -\sqrt{8}$. We will prove this in the next \cref{sec:nonMono}. However, its (weighted) spherical part
\begin{equation*}
    J_\beta(f) \coloneqq \int_{\R^3} \frac{\abs{\Pperp{v} \nabla \log f}^2}{|v|^\beta} f \,\ud{v}, \quad \beta \ge 0,
\end{equation*}
does dissipate along the flow generated by $Lf$.
\begin{lemma}[Dissipation of the spherical Fisher information along $L$]\label{lem:J-diss along L}
    Let $f$ be a smooth probability density. The dissipation
    \begin{equation*}
        D_{J_\beta, L}(f) \coloneqq - \big\langle J_\beta'(f), Lf \big\rangle
    \end{equation*}
    is given by
    \begin{equation}\label{eq:dissipation J beta L}
        D_{J_\beta, L}(f)
        = 2 \int_{\R^3} |v|^{2+\gamma-\beta} \norm{\left(\Pperp{v}\nabla\right)^2 \log f}^2_{\mathrm{HS}} f \,\ud{v}.
    \end{equation}
\end{lemma}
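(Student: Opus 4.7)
The plan is to compute the dissipation by a Gâteaux derivative, reduce pointwise on each sphere via the spherical calculus of~\cref{sec:appendixA}, and invoke the Bochner identity on $\S^2$. Writing $U = \log f$ and using $\Pperp{v}^2 = \Pperp{v}$, one variation of $J_\beta$ followed by a single integration by parts gives
\begin{equation*}
    \langle J_\beta'(f), g\rangle = -\int_{\R^3}\left[2\,\nabla\cdot\left(\frac{\Pperp{v}\nabla U}{|v|^\beta}\right) + \frac{|\Pperp{v}\nabla U|^2}{|v|^\beta}\right] g\,\ud v,
\end{equation*}
so $D_{J_\beta,L}(f)$ is the integral of this bracket against $Lf$. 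Using the identities $\Pperp{v}\nabla U = \tfrac{1}{|v|}\snabla U$, $\nabla\cdot(\Pperp{v}\nabla U) = \tfrac{1}{|v|^2}\Delta_{\S^2}U$, $Lf = |v|^\gamma\Delta_{\S^2}f$, and $\Pperp{v}\nabla(|v|^{-\beta}) = 0$ — all of which follow from the decomposition $\nabla = \hat v\rpartial + \tfrac{1}{|v|}\snabla$ together with $\Pperp{v}v = 0$ — the bracket reduces to $|v|^{-\beta-2}(2\Delta_{\S^2}U + |\snabla U|^2)$, and $\Delta_{\S^2}f = f(\Delta_{\S^2}U + |\snabla U|^2)$ yields
\begin{equation*}
    D_{J_\beta,L}(f) = \int_{\R^3}|v|^{\gamma-\beta-2}(2A + B)(A+B)\,f\,\ud v, \qquad A := \Delta_{\S^2}U,\ \ B := |\snabla U|^2.
\end{equation*}

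On each fixed sphere, the spherical Bochner identity $\tfrac12\Delta_{\S^2}B = \|\mathrm{Hess}_{\S^2} U\|^2 + \snabla U\cdot\snabla A + B$ (which uses $\mathrm{Ric}_{\S^2}=\mathrm{id}$), multiplied by $f$ and integrated over $\S^2$ using the self-adjointness of $\Delta_{\S^2}$ together with $\snabla f = f\snabla U$, produces the algebraic identity $\int_{\S^2}(2A^2 + 3AB + B^2) f\,\ud\omega = 2\int_{\S^2}(\|\mathrm{Hess}_{\S^2} U\|^2 + B)f\,\ud\omega$. Substituting this back gives
\begin{equation*}
    D_{J_\beta,L}(f) = 2\int_{\R^3}|v|^{\gamma-\beta-2}\bigl(\|\mathrm{Hess}_{\S^2} U\|^2 + |\snabla U|^2\bigr)f\,\ud v.
\end{equation*}

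It remains to identify the intrinsic quantity $\|\mathrm{Hess}_{\S^2} U\|^2 + |\snabla U|^2$ with the extrinsic $|v|^4\|(\Pperp{v}\nabla)^2 U\|_{HS}^2$. Working in an orthonormal frame $(e_1, e_2, \hat v)$ at an arbitrary point and expanding the components $M_{ij} := (\Pperp{v}\nabla)_i(\Pperp{v}\nabla)_j U$ with the help of the explicit derivative $\partial_k(\Pperp{v})_{jl} = -(\delta_{jk}v_l+\delta_{lk}v_j)/|v|^2 + 2v_jv_lv_k/|v|^4$, one finds that the tangent-tangent block of $M$ equals the intrinsic sphere Hessian of $U|_{\S^2_{|v|}}$ — the contribution from $\partial\Pperp{v}$ cancels exactly the $\delta_{ij}\rpartial U/|v|$ term that distinguishes the restricted Euclidean Hessian from the intrinsic sphere Hessian via the second fundamental form $\mathrm{II}(e_i, e_j)=-\delta_{ij}/|v|$ — while the only other nonzero entries of $M$ form a tangent-normal block whose squared sum equals $|\snabla U|^2/|v|^4$. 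Rescaling intrinsic quantities from $\S^2_{|v|}$ to the unit sphere introduces a factor $|v|^{-4}$ and gives $\|(\Pperp{v}\nabla)^2 U\|_{HS}^2 = |v|^{-4}(\|\mathrm{Hess}_{\S^2} U\|^2 + |\snabla U|^2)$, which upon substitution yields~\eqref{eq:dissipation J beta L}. This last identification is the main obstacle of the proof: since $\Pperp{v}\nabla$ is not the Levi-Civita connection of the sphere, $(\Pperp{v}\nabla)^2$ accumulates cross-terms from differentiating the projector, and these must conspire precisely with the second-fundamental-form correction to produce the clean extrinsic form stated in the lemma.
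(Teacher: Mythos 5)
Your proof is correct and follows essentially the same route as the paper: reduce $J_\beta$ and $L$ to spherical variables, apply Bochner's formula on $\S^2$ together with self-adjointness of $\Delta_{\S^2}$ and $\snabla f = f\snabla\log f$ to eliminate the cross and quartic terms, then identify the resulting intrinsic quantity $\|\mathrm{Hess}_{\S^2}\log f\|^2 + |\snabla\log f|^2$ with $|v|^4\|(\Pperp{v}\nabla)^2\log f\|^2_{\mathrm{HS}}$. The only cosmetic difference is that you rederive that last extrinsic-intrinsic identity by an explicit frame computation, whereas the paper delegates it to Appendix~A (the relations \eqref{eq:nabla omega of nabla omega identity} and \eqref{eq:Gamma 2 HS}); and your ordering of the integration by parts — writing the weak form of $J_\beta'$ first and then substituting $Lf/f = |v|^\gamma(\Delta_{\S^2}\log f + |\snabla\log f|^2)$ — is a harmless reorganization of the same calculation.
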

\begin{proof}
    We first note, in view of \eqref{eq:nabla nabla omega traced pi is spherical Laplacian} and \eqref{eq:nabla omega of nabla omega identity}, using spherical variables $v = r\omega$ with $(r, \omega) \coloneqq (|v|, \hv)$ and $\hv \coloneqq v/|v|$, and denoting $\tf(r, \omega) \coloneqq f(v)$, that
    \begin{equation*}
        Lf = r^\gamma \Delta_{\S^2} \tf, \quad \frac{Lf}{f} = r^\gamma\left(\Delta_{\S^2} \log\tf  + \abs{\snabla \log\tf}^2\right), \quad \tf(r, \omega) \coloneq f(v).
    \end{equation*}
    Hence,
    \begin{align*}
        \big\langle J_\beta'(f) , Lf\big\rangle &= 2 \int_{0} ^\infty \int_{\S^2} r^{\gamma-\beta} \snabla \log\tf \cdot \snabla\left(\Delta_{\S^2} \log\tf + \abs{\snabla \log\tf}^2\right) \tf \ud{\omega}\ud{r} \\
            &\quad + \int_{0}^\infty \int_{\S^2} r^{\gamma-\beta}\abs{\snabla \log\tf}^2 \left(\Delta_{\S^2} \log\tf  + \abs{\snabla \log\tf}^2\right) \tf \ud{\omega}\ud{r}.
    \end{align*}
    Integrating by parts yields
    \begin{equation}\label{eq:pairing j L after IBP}
        \big\langle J_\beta'(f) , Lf\big\rangle = \int_0^\infty \int_{\S^2} r^{2+\gamma-\beta}\left(2\abs{\snabla \log\tf}^{4} + 3\snabla \left(\abs{\snabla \log\tf}^2\right) \cdot \snabla \log\tf - 2\left(\Delta_{\S^2} \log\tf\right)^2\right) \tf \ud{\omega}\ud{r}.
    \end{equation}
    Recall Bochner's formula on $\S^2$;
    \begin{equation*}
        \frac{1}{2}\Delta_{\S^2} \left(\abs{\snabla \log\tf}^2\right) = \norm{\nabla_{\S^2}^2 \log\tf}^2_{\mathrm{HS}} + \abs{\snabla \log\tf}^2 + \snabla \left(\Delta_{\S^2} \log\tf\right) \cdot \snabla \log\tf.
    \end{equation*}
    Multiplying this identity by $\tf$ and integrating by parts, we obtain
    \begin{multline*}
        -\frac{1}{2}\int_{\S^2} \left(\snabla \left(\abs{\snabla \log\tf}^2\right) \cdot \snabla \log\tf\right) \tf \\ = \int_{\S^2} \left(\norm{\nabla_{\S^2}^2 \log\tf}^2_{\mathrm{HS}} + \abs{\snabla \log\tf}^2 + \abs{\snabla \log\tf}^{4} + \snabla\left(\abs{\snabla \log\tf}^2\right) \cdot \snabla \log\tf - \left(\Delta_{\S^2} \log\tf\right)^2\right)\tf,
    \end{multline*}
    whose convenient form is
    \begin{equation}\label{eq:convenient bochner IBP}
        \frac{3}{2}\int_{\S^2} \left(\snabla \left(\abs{\snabla \log\tf}^2\right) \cdot \snabla \log\tf\right) \tf = \int_{\S^2} \left(\left(\Delta_{\S^2} \log\tf\right)^2 - \Gamma_2(\log \tf, \log \tf) - \abs{\snabla \log\tf}^{4}\right)\tf,
    \end{equation}
    and inserting \eqref{eq:convenient bochner IBP} into \eqref{eq:pairing j L after IBP} yields
    \begin{equation}\label{eq:j L dissipation}
        \big\langle J_\beta'(f) , Lf\big\rangle = -2\int_{0}^{\infty} \int_{\S^2} r^{\gamma-\beta} \Gamma_2\left(\log\tf, \log\tf\right) \tf\, \ud{\omega}\ud{r}.
    \end{equation}
    The result follows recalling \eqref{eq:Gamma 2 HS}.
\end{proof}
Of course, the Boltzmann (relative) entropy also dissipates along $L$ as well and its derivative corresponds to the spherical (weighted) Fisher information.
\begin{lemma}[Dissipation of entropy along $L$]\label{lem:H-diss along L}
    Let $f$ be a smooth probability density. The dissipation
    \begin{equation*}
        D_{H, L}(f) \coloneqq - \big\langle H'(f), Lf \big\rangle
    \end{equation*}
    is given by
    \begin{equation*}
        D_{H, L}(f)
        = \int_{\R^3} |v|^{2+\gamma} \abs{\Pperp{v}\nabla \log f}^2 f \,\ud{v}.
    \end{equation*}
\end{lemma}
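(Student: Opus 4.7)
The plan is to follow the same pattern of integration by parts used throughout this section, but with a simpler integrand, since the entropy produces only one gradient. First, compute the Gâteaux derivative $H'(f) = \log f + 1$, so that
\[
    \big\langle H'(f), Lf\big\rangle = \int_{\R^3} \left(\log f + 1\right) Lf \,\ud v.
\]
Because $Lf = \nabla \cdot \bigl(|v|^{2+\gamma}\Pperp{v} \nabla f\bigr)$ is in divergence form, the constant contribution vanishes (provided the decay of $f$ and its derivatives at infinity make the boundary terms disappear, which follows from the smoothness assumption on $f$ combined with the decay associated with a probability density of finite second moment), leaving $\int_{\R^3} \log f \,Lf \,\ud v$.

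Next, integrate by parts once more to move the outer divergence onto $\log f$:
\[
    \int_{\R^3} \log f\, Lf \,\ud v = -\int_{\R^3} |v|^{2+\gamma}\,\nabla \log f \cdot \Pperp{v}\nabla f \,\ud v.
\]
Here I would again note that the boundary terms vanish thanks to the $\Pperp{v}$ factor, which kills any radial singularity at the origin in combination with the weight $|v|^{2+\gamma}$ for $\gamma>-3$, as well as the decay of $f$ at infinity. Then use the pointwise identity $\Pperp{v} \nabla f = f\, \Pperp{v} \nabla \log f$ (write $\nabla f = f \nabla \log f$ and apply the linear projection) together with the fact that $\Pperp{v}$ is symmetric and idempotent, to rewrite the integrand as
\[
    \nabla \log f \cdot \Pperp{v}\nabla \log f \cdot f = \abs{\Pperp{v}\nabla \log f}^2 f.
\]

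Combining these two steps gives $\langle H'(f), Lf\rangle = -\int_{\R^3} |v|^{2+\gamma} \abs{\Pperp{v}\nabla \log f}^2 f\,\ud v$, whence the claimed formula for $D_{H,L}(f)$ follows by definition. The only genuinely non-routine point is the justification of the two integrations by parts; since the lemma is stated for smooth probability densities (and will be applied to the strong solutions of \eqref{eq:infiniteCoulomb} whose regularity is provided by the local well-posedness argument and the assumption $J(f^\circ)<\infty$), this is standard. No spherical calculus identities from Appendix~\ref{sec:appendixA} are needed; the computation is genuinely one-line once the boundary terms are handled. I expect this lemma to be presented essentially as a remark or a warm-up for the more involved~\cref{lem:J-diss along L}.
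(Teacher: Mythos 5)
Your proof is correct, and the paper in fact omits the proof of this lemma entirely, treating it as a routine observation (it is stated right after \cref{lem:J-diss along L} with the introductory remark ``Of course, the Boltzmann (relative) entropy also dissipates along $L$''). Your computation — $H'(f)=\log f+1$, dropping the constant since $Lf$ is in divergence form, one integration by parts, then $\nabla\log f\cdot\Pperp{v}\nabla\log f=\abs{\Pperp{v}\nabla\log f}^2$ by symmetry and idempotence of $\Pperp{v}$ — is precisely the argument the authors take for granted, and it matches the style of the explicit proof they do give for the companion statement \cref{lem:H-diss along Q}. One tiny imprecision in your commentary: the integrability of the weight $|v|^{2+\gamma}$ near the origin does not really hinge on the $\Pperp{v}$ factor and holds down to $\gamma=-3$ (since $|v|^{-1}$ is locally integrable in $\R^3$); this is immaterial to the result but worth stating cleanly since the Coulomb case $\gamma=-3$ is the one of primary interest.
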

On the other hand, the expression of the Gateaux derivative of $I$ along $L$ can be deduced, at least informally, from $D_{\tI_{ij}, \tQ_{ij}}(f_i \otimes f_j)$ setting $f_i = f_j = f$, $m_i = 1, m_j = +\infty$, $\gamma = -3$, and noticing that \eqref{eq:J2 z z star in dissipation equality} vanishes. This observation follows from the fact that lifting is equivalent to a symmetrization argument (see~\cite{GGPTZ25} and e.g. the proof of \cref{prop:I-diss along Q}). 
\begin{proposition}[Evolution of the Fisher information along $L$]\label{prop:I prime L}
    Let $f$ be a smooth probability density. Then
    \begin{align*}
        \big\langle I'(f) , Lf\big\rangle = &-2\int_{\R^3} |v|^{2+\gamma}\abs{\Pperp{v}\nabla \left(\hv \cdot \nabla \log f\right) + \frac{\gamma}{2|v|}\Pperp{v}\nabla \log f}^2 f\,\ud{v}\\
        &-2\int_{\R^3} |v|^{2+\gamma} \left(\norm{\left(\Pperp{v}\nabla\right)^2 \log f}_{\mathrm{HS}}^2 - \frac{\gamma^2}{4|v|^2}\abs{\Pperp{v}\nabla \log f}^2 \right) f\,\ud{v}.
    \end{align*}
\end{proposition}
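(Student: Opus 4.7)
The plan is to compute $\langle I'(f), Lf\rangle$ by direct integration by parts and then pass to spherical coordinates, following the single-variable analogue of \cref{lem:two-particle-dissipation} and the first part of \cref{prop:coercivity}. Alternatively, as the text indicates, the identity can be recovered from the two-particle dissipation \eqref{eq:J2 z z star in dissipation equality} with $m_i=1$, $m_j\to\infty$: in this limit $\aij\to 1$, $\Mij\to\infty$, and the $z^*$--term vanishes, leaving only the $z$--contribution which matches the formula to be proved. I will present the direct approach, as the limiting argument only justifies the identity formally.

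First, I would expand the Gateaux derivative
\[
\langle I'(f), Lf\rangle = \int_{\R^3}\Bigl(2\nabla(Lf/f)\cdot\nabla f + Lf\,|\nabla\log f|^2\Bigr)\,\ud v,
\]
and exploit the identity
\[
\tfrac{Lf}{f} = \nabla\cdot\bigl(|v|^{2+\gamma}\Pperp{v}\nabla\log f\bigr) + |v|^{2+\gamma}\bigl|\Pperp{v}\nabla\log f\bigr|^2.
\]
Splitting the integrand accordingly, integrating by parts twice, and pairing off the terms that are quadratic in $\nabla\log f \otimes \nabla\log f$ (the same cancellation as in the proof of \cref{lem:two-particle-dissipation}) yields the compact identity
\[
\langle I'(f), Lf\rangle = -2\int_{\R^3}\nabla\bigl(|v|^{2+\gamma}\Pperp{v}\nabla\log f\bigr):\nabla^2\log f\, f\,\ud v,
\]
which is precisely the single-variable restriction of \eqref{eq:T111}.

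Second, I would introduce spherical variables $v = r\omega$ with $r=|v|$, $\omega = \hv$, and expand the contraction using the Hessian decompositions gathered in \cref{sec:appendixA}. The resulting expression is term-by-term the $z$--part of the integrand appearing in \eqref{eq:J1-11}--\eqref{eq:J1-12}, namely
\[
r^{2+\gamma}|\snabla\partial_r\log f|^2 + (\gamma-1)r^{1+\gamma}\snabla\partial_r\log f\cdot\snabla\log f - \gamma r^{\gamma}|\snabla\log f|^2 + r^{\gamma}\|\snabla^2\log f\|_{\mathrm{HS}}^2 + r^{1+\gamma}\partial_r\log f\,\Delta_{\S^2}\log f.
\]
Integrating by parts the cross-term $r^{1+\gamma}\partial_r\log f\,\Delta_{\S^2}\log f$ in both the radial and the spherical directions (mirroring \eqref{eq:-1 nabla omega nabla r cdot nabla omega}--\eqref{eq:2 nabla omega square}) and then completing the square via
\[
r^{2+\gamma}|\snabla\partial_r\log f|^2 + \gamma r^{1+\gamma}\snabla\partial_r\log f\cdot\snabla\log f + r^\gamma|\snabla\log f|^2 = r^{2+\gamma}\bigl|\snabla\partial_r\log f + \tfrac{\gamma}{2r}\snabla\log f\bigr|^2 + \bigl(1-\tfrac{\gamma^2}{4}\bigr)r^\gamma|\snabla\log f|^2,
\]
and repackaging the Hilbert--Schmidt contribution via identity \eqref{eq:Gamma 2 HS}, I obtain after returning to Cartesian variables exactly the two displayed integrals of the statement.

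The main obstacle is the bookkeeping of the spherical decomposition in the second step, and the verification that the boundary terms at $r\to 0^+$ and $r\to\infty$ produced by the radial integration by parts vanish. The latter follows under the assumed regularity and decay of $f$: smoothness and finite moments kill the contribution at infinity, while $\gamma>-3$ together with the finiteness of $I(f)$ (and, where needed, $J(f)$) suppresses the boundary term at the origin. Beyond this, the computation is algebraically identical to the $z$--only portion of the proof of \cref{prop:coercivity}, so no genuinely new ingredient is required.
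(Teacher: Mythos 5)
Your proposal is correct, and it reaches the same intermediate spherical expression and the same completion of the square, but it follows a genuinely different decomposition than the paper. You derive the compact contraction identity $\langle I'(f),Lf\rangle = -2\int \nabla\bigl(|v|^{2+\gamma}\Pperp{v}\nabla\log f\bigr):\nabla^2\log f\,f\,\ud v$ as the one-variable analogue of~\eqref{eq:T111} and then expand it via \cref{lem:decomposition Euclidean Hessian} and \cref{lem:decomposition Hessian-ish}, i.e.\ you re-run the $z$-only part of \cref{prop:coercivity}. The paper instead splits the Fisher information as $I(f)=\int(\hv\cdot\nabla\log f)^2 f + J_0(f)$, evaluates the radial piece by direct integration by parts using $Lf/f = r^\gamma(\Delta_{\S^2}\log\tf+|\snabla\log\tf|^2)$, and reuses \cref{lem:J-diss along L} (Bochner on $\S^2$, \eqref{eq:j L dissipation}) for $\langle J_0'(f),Lf\rangle$. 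Your route has the advantage that all the algebra is unified with the two-particle computation already carried out in Section~\ref{sec:Dissipation of the weighted Fisher information} and requires no separate Bochner step; the paper's route is slightly shorter precisely because \cref{lem:J-diss along L} has already been established. One small point worth making explicit: the boundary term at $r\to 0^+$ vanishes either because $\gamma>-3$ or, for $\gamma=-3$, because $f$ vanishes near the origin in the application (\cref{thm:FisherIncrease}); unlike in \cref{prop:I-diss along Q}, there is no tensor-product symmetry $\Pperp{z}\gradz\log\off(0,z^*)=0$ to invoke here, so you should not rely on that mechanism in the single-variable setting.
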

\begin{proof}
    We decompose
    \begin{align*}
        \big\langle I'(f) , Lf\big\rangle &= 2 \int_{\S^2} \int_{0} ^\infty \partial_r \log\tf\, \partial_r \left(r^{\gamma}\left(\Delta_{\S^2} \log\tf  + \abs{\snabla \log\tf}^2\right)\right) \tf\, r^2 \ud{r}\ud{\omega} \\
            &\quad + \int_{\S^2}\int_{0} ^\infty  r^{\gamma}\left(\partial_r \log\tf\right)^2\left(\Delta_{\S^2} \log\tf  + \abs{\snabla \log\tf}^2\right) \tf\, r^2 \ud{r}\ud{\omega} \\
            &\quad + \big\langle J_0'(f) , Lf\big\rangle.
    \end{align*}
    Integrating by parts, we obtain
    \begin{align*}
        2 \int_{\S^2} \partial_r \log\tf\, &\partial_r \left(r^{\gamma}\left(\Delta_{\S^2} \log\tf  + \abs{\snabla \log\tf}^2\right)\right) \tf \ud{\omega} \\
            &= 2\int_{\S^2} \partial_r \log\tf \left(\gamma r^{\gamma-1} + \partial_r\right)\left(\Delta_{\S^2} \log\tf  + \abs{\snabla \log\tf}^2\right)\tf \,\ud{\omega} \\
            &= 2 \int_{\S^2} r^{\gamma} \left(\frac{1}{2} \partial_r \log\tf\, \partial_r \left(\abs{\snabla \log\tf}^2\right) - \abs{\snabla \partial_r \log\tf}^2 - \frac{\gamma}{r} \partial_r \left(\abs{\snabla \log\tf}^2\right)\right) \tf \ud{\omega}
    \end{align*}
    and
    \begin{equation*}
        \int_{\S^2} r^{\gamma}\left(\partial_r \log\tf\right)^2\left(\Delta_{\S^2} \log\tf  + \abs{\snabla \log\tf}^2\right) \tf \ud{\omega} \\
            = - \int_{\S^2} r^{\gamma} \partial_r \log\tf\, \partial_r \left(\abs{\snabla \log\tf}^2\right) \tf \ud{\omega}.
    \end{equation*}
    Completing the square
    \begin{equation*}
        \abs{\snabla \partial_r \log\tf}^2 + \frac{\gamma}{r} \partial_r \left(\abs{\snabla \log\tf}^2\right) = \abs{\snabla \partial_r \log\tf + \frac{\gamma}{2r} \snabla \log\tf}^2 - \frac{\gamma^2}{4r^2}\abs{\snabla \log\tf}^2,
    \end{equation*}
    courtesy of \eqref{eq:j L dissipation}, we are left with
    \begin{align*}
        \big\langle I'(f) , Lf\big\rangle &= -2 \int_{\S^2}\int_{0}^{\infty}  r^{\gamma} \abs{r\snabla\partial_r \log\tf + \frac{\gamma}{2}\snabla \log\tf}^2 \tf\,\ud{r}\ud{\omega}\\
        &\quad -2 \int_{\S^2}\int_{0}^{\infty}  r^{\gamma} \left( \Gamma_2\left(\log\tf, \log\tf\right) - \frac{\gamma^2}{4}\abs{\snabla \log\tf}^2\right) \tf\, \ud{r}\ud{\omega}.
    \end{align*}
\end{proof}
Now, we recall the dissipation of the Boltzmann (relative) entropy and Fisher information obtained by Guillen and Silvestre \cite{GS25}.
\begin{lemma}[Dissipation of entropy along $Q$]\label{lem:H-diss along Q}
    Let $f$ be a probability density that is either local-in-time solution to  \eqref{eq:InfiniteGamma} with initial density $f^\circ \in \mathcal{S}(\R^3)$ or strong solution to~\eqref{eq:infiniteCoulomb} with initial density $f^\circ$ satisfying $f^\circ \in L^1_{2\ell} \cap L^2_\ell \cap L \log L$ for some $\ell\geq 7$ and  $\JSph{f^\circ } < \infty$. The dissipation
    \begin{equation*}
        D_{H, Q}(f) \coloneqq - \big\langle H'(f), Q(f,f) \big\rangle
    \end{equation*}
    is given by
    \begin{equation*}
        D_{H, Q}(f)
        = \frac{1}{2}\int_{(\R^3)^2} |v-v^*|^{2+\gamma} \abs{\Pperp{v-v^*}\nablavwminus \log \ff}^2 \ff \ud{v}\ud{v^*}.
    \end{equation*}
\end{lemma}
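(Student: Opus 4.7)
The plan is to perform a standard symmetrization and integration-by-parts argument, of the type used throughout the Landau literature, starting from the Gateaux derivative $H'(f) = \log f + 1$. Since $\int_{\R^3} Q(f,f)\,\ud v = 0$ (the collision operator conserves mass), the constant contribution drops and we reduce the pairing to
\begin{equation*}
    \big\langle H'(f), Q(f,f)\big\rangle = \int_{\R^3} \log f(v) \cdot \nabla_v \cdot \int_{\R^3} A(v - v^*) \nablavwminus \ff(v, v^*)\, \ud v^*\, \ud v.
\end{equation*}

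Next, I would integrate by parts in $v$, transferring the outer divergence onto $\log f(v)$ so that $\nabla_v \log f$ appears. Using the identity
\begin{equation*}
    \nablavwminus \ff = \ff\, \nablavwminus \log \ff,
\end{equation*}
this yields
\begin{equation*}
    \big\langle H'(f), Q(f,f)\big\rangle = -\int_{(\R^3)^2} \nabla_v \log f \cdot A(v-v^*) \nablavwminus \log \ff\, \ff\, \ud v\, \ud v^*.
\end{equation*}

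Then I would apply the key symmetrization step: since $A(v - v^*) = A(v^* - v)$, swapping the dummy variables $v \leftrightarrow v^*$ in the above integral replaces $\nabla_v \log f$ by $-\nabla_{v^*} \log f(v^*)$ and flips the sign of $\nablavwminus$ once, producing $+\nabla_{v^*} \log f^* \cdot A\, \nablavwminus \log \ff$. Averaging the two expressions collapses the left factor to $\tfrac{1}{2}\nablavwminus \log \ff$, giving the quadratic form
\begin{equation*}
    \big\langle H'(f), Q(f,f)\big\rangle = -\tfrac{1}{2}\int_{(\R^3)^2} \nablavwminus \log \ff \cdot A(v - v^*)\, \nablavwminus \log \ff\, \ff\, \ud v\, \ud v^*.
\end{equation*}
Substituting $A(z) = |z|^{2+\gamma} \Pperp{z}$ and using the projector identity $\Pperp{z}^\top \Pperp{z} = \Pperp{z}$ converts the integrand to $|v - v^*|^{2+\gamma}\, |\Pperp{v-v^*} \nablavwminus \log \ff|^2$, and multiplying by $-1$ to pass from the pairing to the dissipation $D_{H,Q}(f)$ yields the claimed identity.

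The main (essentially sole) obstacle is justifying the integration by parts and the symmetrization rigorously: one must control boundary terms at infinity and the singularity of $|v-v^*|^{2+\gamma}$ on the diagonal. For the two regularity classes stated in the hypothesis, the local well-posedness statements (namely \cref{lem:local_well_posedness} and its spherical variant underlying \eqref{eq:infiniteCoulomb}) already provide sufficient smoothness, decay of moments, and finiteness of $\IFish{f}$ and $\JSph{f}$ on compact time intervals, so all manipulations above are legitimate by standard cut-off and density arguments. I would conclude the proof by a brief appeal to this regularity, exactly as done in \cite{GS25}.
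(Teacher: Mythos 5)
Your proposal is correct and follows essentially the same route as the paper: both exploit mass conservation to drop the constant in $H'(f)=\log f + 1$, symmetrize under the exchange $v\leftrightarrow v^*$ (relying on $A(-z)=A(z)$), integrate by parts, and appeal to the regularity and moment bounds from the local well-posedness results to justify the manipulations. The paper symmetrizes before integrating by parts whereas you do the reverse, but this is a trivial permutation of the same two steps, and the resulting quadratic form is identical.
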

\begin{proof}
    By direct computation we have 
    \begin{align*}
        \big\langle H'(f), Q(f,f) \big\rangle &= \int_{\R^3} Q(f,f) \log f\, \ud{v} \\
            &= \int_{(\R^3)^2} \nablavwminus \cdot \left(|v-v^*|^{2+\gamma} \Pperp{v-v^*}\nablavwminus \ff\right) \log f(v)\, \ud{v}\ud{v^*} \\
            &= \int_{(\R^3)^2} \nablavwminus \cdot \left(|v-v^*|^{2+\gamma} \Pperp{v-v^*}\nablavwminus \ff\right) \log f(v^*)\, \ud{v}\ud{v^*} \\
            &= \frac{1}{2} \int_{(\R^3)^2} \nablavwminus  \cdot \left(|v-v^*|^{2+\gamma} \Pperp{v-v^*}\nablavwminus \ff\right) \log \ff\, \ud{v}\ud{v^*}.
    \end{align*}
    Integrating by parts concludes the proof.
\end{proof}
\begin{proposition}[Dissipation of Fisher information along $Q$]\label{prop:I-diss along Q}
    In the setting of \cref{lem:H-diss along Q}, the dissipation
    \begin{equation*}
        D_{I,Q}(f,f) \coloneqq -\big\langle I'(f), Q(f,f) \big\rangle,
    \end{equation*}
    is given, in the variables $(z, z^*) \coloneqq (v-v^*, (v+v^*)/2)$ with $\off(z, z^*) \coloneqq (f \otimes f)(v, v^*)$, by
    \begin{align*}
        D_{I, Q}(f,f)
            &= \frac{1}{8} \int_{({\R^3})^{2}} |z|^{2+\gamma} \abs{\Pperp{z}\gradz\left(\hz \cdot \gradz \log\off\right) + \frac{\gamma}{2|z|}\Pperp{z}\gradz \log\off}^2 \off\,\ud{z}\ud{z^*}\\
            &\quad + \frac{1}{8}\int_{({\R^3})^{2}} |z|^\gamma \left(|z|^2\norm{\left(\Pperp{z}\gradz\right)^2 \log\off}^2_{\mathrm{HS}} - \frac{\gamma^2}{4}\abs{\Pperp{z}\gradz \log\off}^2 \right) \off\,\ud{z}\ud{z^*} \\
            &\quad + \frac{1}{32} \int_{(\R^3)^2} \gradzs \gradz \log\off : \left(\gradzs \gradz \log\off \Az\right) \off\,\ud{z}\ud{z^*}.
    \end{align*}
\end{proposition}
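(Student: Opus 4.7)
The plan is to mirror the computation of Lemma~\ref{lem:two-particle-dissipation} and Proposition~\ref{prop:coercivity} for a single species, replacing the lifting by a direct symmetrization in $(v, v^*)$.

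First, I would compute the Gâteaux derivative using $\langle I'(f), h\rangle = -\int h\,(2\Delta\log f + |\nabla\log f|^2)\,\mathrm{d}v$, substitute $h = Q(f,f)$, and integrate by parts in $v$ to obtain
\begin{align*}
-\langle I'(f), Q(f,f)\rangle = \int_{(\R^3)^2} \nabla_v L_I(f)(v)\cdot A(v-v^*)\nablavwminus(f\otimes f)\,\mathrm{d}v\,\mathrm{d}v^*,
\end{align*}
with $L_I(f) = 2\Delta\log f + |\nabla\log f|^2$. A symmetrization in $(v,v^*)$, using $A(-z)=A(z)$, the symmetry of $f\otimes f$, and the anti-symmetry of $\nablavwminus$, would recast this as
\begin{align*}
-\langle I'(f), Q(f,f)\rangle = \tfrac{1}{2}\int_{(\R^3)^2} \bbnablavw\!\bigl(A(v-v^*)\nablavwminus U\bigr) : \bbnablavw\nablavwminus U\,(f\otimes f)\,\mathrm{d}v\,\mathrm{d}v^*,
\end{align*}
where $U = \log(f\otimes f)$. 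This is the single-species analogue of \eqref{eq:T111} corresponding to the degenerate parameters $m_i=m_j=1$, $f_i=f_j=f$, $c_{ij}=1$, which is the precise sense in which ``lifting is equivalent to symmetrization''.

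Next I would change variables to $(z, z^*) = (v-v^*, (v+v^*)/2)$; the Jacobian equals $1$, $\nablavwminus \mapsto 2\nabla_z$, and $\bbnablavw$ transforms as in the discussion preceding \eqref{eq:main term paring}. Expanding the contraction separates the integrand into a pure $\nabla_z$ piece and a mixed $\nabla_{z^*}\nabla_z$ piece, corresponding to \eqref{eq:main term paring} and \eqref{eq:quadratic term pairing} respectively. The mixed piece immediately yields the third term of the claimed formula.

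For the pure $\nabla_z$ piece I would proceed exactly as in Proposition~\ref{prop:coercivity}: pass to spherical coordinates $z = r\omega$, apply the Hessian decomposition of Appendix~\ref{sec:appendixA}, use Bochner's formula for $\Delta_{\S^2}$, and integrate by parts in $r$ (the boundary terms at $0$ and $\infty$ vanish thanks to $\gamma > -3$, the finiteness of $\JSph{f}$, and the moment control). Completing the square in $\snabla \partial_r\log\off + \tfrac{\gamma}{2r}\snabla \log\off$ then reproduces the first two terms of the statement. Unlike in Proposition~\ref{prop:coercivity}, I would \emph{not} invoke the log-Sobolev inequality here: the target is an equality rather than a coercive lower bound, so the iterated carré du champ is retained in its Hessian form $\|(\Pperp{z}\nabla_z)^2\log\off\|_{\mathrm{HS}}^2 - \tfrac{\gamma^2}{4|z|^2}|\Pperp{z}\nabla_z\log\off|^2$. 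The hard part will be the careful bookkeeping of the numerical constants introduced by the symmetrization and the change of variables, which must combine to give the stated coefficients $\tfrac{1}{8}$, $\tfrac{1}{8}$, and $\tfrac{1}{32}$; all the analytic ingredients have already been developed in the proof of Proposition~\ref{prop:coercivity} and transfer verbatim.
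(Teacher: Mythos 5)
Your approach is essentially the same as the paper's: replace the lifting by a symmetrization in $(v,v^*)$, change variables to $(z,z^*)$, and redo the spherical Hessian decomposition of Appendix~\ref{sec:appendixA} with the complete-the-square from \cref{prop:coercivity}, crucially \emph{without} invoking the log-Sobolev inequality since the target is an equality. Two minor bookkeeping slips to fix in the write-up: the integration by parts of $\int Q\,L_I(f)\,\ud v$ produces a minus sign in front of $\int\nabla_v L_I(f)\cdot A\,\nablavwminus\ff$, and the symmetrized expression should carry prefactor $1$ rather than $\tfrac12$ (halving the paper's coefficients $2$ and $1$ in $T_1$ and $T_2$ of \cref{lem:two-particle-dissipation} gives $-c_{ij}\int\bbnablavw(A\,\notbbnablavw U):\bbnablavw\notbbnablavw U\,\ff$ with $c_{ij}=1$), issues you already flagged as the place where care is needed.
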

\begin{proof}
    Since the computations mirror the ones of \cref{sec:Dissipation of the weighted Fisher information}, we introduce the analog notation
    \begin{equation*}
        \bbnablavw \coloneqq \left(\nabla_v, \nabla_{v^*}\right), \qquad \notbbnablavw\, \coloneqq \nabla_v - \nabla_{v^*}.
    \end{equation*}
    We have
    \begin{equation*}
        \big\langle I'(f), Q(f,f) \big\rangle = \int_{\R^3} 2\nabla \left(\frac{ Q(f, f)}{f}\right) \cdot \nabla f + \abs{\nabla \log f}^2 Q(f,f) \,\ud{v} \coloneqq \text{T}_1 + \text{T}_2.
    \end{equation*}
    Then
    \begin{align*}
        \text{T}_1 &= 2\int_{(\R^3)^2} \nabla_v \left(\frac{\nablavwminus \cdot \left(A(v-v^*) \nablavwminus \ff\right)}{f \otimes f} \right) \cdot \nabla_v \log f(v) \ff \ud{v}\ud{v^*} \\
            &= 2\int_{(\R^3)^2} \nabla_{v^*} \left(\frac{\nablavwminus \cdot \left(A(v-v^*) \nablavwminus \ff\right)}{f \otimes f} \right) \cdot \nabla_{v^*} \log f(v^*) \ff \ud{v}\ud{v^*} \\
            &= \int_{(\R^3)^2} \left(\bbnablavw \left(\frac{\notbbnablavw \cdot \left(A(v-v^*) \notbbnablavw \ff\right)}{f \otimes f} \right) \cdot \bbnablavw \log \ff\right) \ff \ud{v}\ud{v^*}
    \end{align*}
    and
    \begin{align*}
        \text{T}_2 &= \int_{(\R^3)^2} \abs{\nabla_v \log f(v)}^2 \nablavwminus \cdot \left(A(v-v^*) \nablavwminus \ff\right) \ud{v}\ud{v^*} \\
            &= \int_{(\R^3)^2} \abs{\nabla_{v^*} \log f(v^*)}^2 \nablavwminus \cdot \left(A(v-v^*) \nablavwminus \ff\right) \ud{v}\ud{v^*} \\
            &= \frac{1}{2}\int_{(\R^3)^2} \abs{\bbnablavw \log \ff}^2 \notbbnablavw \cdot \left(A(v-v^*) \notbbnablavw \ff\right) \ud{v}\ud{v^*},
    \end{align*}
    so that
    \begin{align*}
        \big\langle I'(f), Q(f,f) \big\rangle &= \int_{(\R^3)^2} \left(\bbnablavw \left(\frac{\tQ \ff}{f \otimes f} \right) \cdot \bbnablavw \log \ff\right) \ff \\
        &\quad + \frac{1}{2}\int_{(\R^3)^2} \tQ \ff \abs{\bbnablavw \log \ff}^2
    \end{align*}
    with
    \begin{equation*}
        \tQ \ff \coloneqq\: \notbbnablavw \cdot \left(A(v-w) \notbbnablavw \ff\right).
    \end{equation*}
    The proof is now the same as the ones of \cref{lem:two-particle-dissipation} and \cref{prop:coercivity} combined. Note that the contribution at the origin from radial integration also vanishes even though $\gamma = -3$, but now because of symmetry in the first variable; $\Pperp{z}\gradz \log\off(0, z^*) = 0$. The contribution at $+\infty$ is zero again by finiteness of the Fisher information and bounds on the moments.
\end{proof}
\begin{remark}\label{rmk:spherical Fisher}
The standard Fisher information $I(f)$ and the operator $Q(f,f)$ are translation invariant, leading to good dissipation estimates for $I$ along $Q$. On the other hand, the weighted spherical Fisher information $J_1$ breaks the translation invariance and does not interact well with $Q$ by itself. Therefore, we will use the dissipation of $J_1$ along the spherical diffusion
    \begin{equation*}
        L_Sf(v) \coloneqq \nabla \cdot \left(\frac{\Pperp{v}\nabla f(v)}{|v|}\right),
    \end{equation*} to bound the resulting terms. 
\end{remark}
\begin{lemma}\label{lem:dissipation extra inequality}
    Recall~\cref{def:optimal_constants} for $\Lambda_3, \Lambda_3^{\mathrm{sym}}$. Then the following bounds hold:
    \begin{align}
        \int_{\R^3} \left(\frac{\abs{\Pperp{v}\nabla \log f}^4}{|v|^{2}} + 18\frac{\abs{\Pperp{v}\nabla \log f}^2}{|v|^{4}}\right) f \, \ud{v} &\le 9 D_{J_1, L_S}(f) \numberthis\label{eq:norm 4 DJL} \\
        \int_{\R^3} \left(\frac{\abs{\nabla\cdot \lp \Pperp{v}\nabla \log f \rp }^2}{|v|^{2}} + 2\,\frac{\abs{\Pperp{v}\nabla \log f}^2}{|v|^{4}}\right) f\, \ud{v} &\le D_{J_1, L_S}(f) \numberthis\label{eq:norm Delta 2 DJL} \\
        \int_{(\R^3)^2} \frac{\abs{\Pperp{v-v^*}\nablavwminus \log\ff}^4}{|v-v^*|^{-(2+\gamma)}} \ff \,\ud{v}\ud{v^*} &\le 2 \bar C_{\gamma, \Lambda_3^{\mathrm{sym}}} D_{I, Q_\gamma}(f,f) \numberthis\label{eq:norm 4 DIQ} \\
        \int_{(\R^3)^2} \frac{\abs{\nablavwminus\cdot\left(\Pperp{v-v^*}\nablavwminus \log\ff\right)}^2}{|v-v^*|^{-(2+\gamma)}} \ff \,\ud{v}\ud{v^*} &\le \bar C_{\gamma, \Lambda_3^{\mathrm{sym}}} D_{I, Q_\gamma}(f,f) \numberthis\label{eq:norm Delta 2 DIQ}, 
    \end{align} where $\bar C_{\gamma, \Lambda_3^{\mathrm{sym}}} \coloneqq \tfrac{4608}{4 - \gamma^2/\Lambda_3^{\mathrm{sym}}} $.
\end{lemma}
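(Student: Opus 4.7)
The plan is to pass to spherical coordinates on $\R^3$ (for \eqref{eq:norm 4 DJL} and \eqref{eq:norm Delta 2 DJL}) and, after the volume-preserving change of variables $(v,v^*)\mapsto(z,z^*)=(v-v^*,(v+v^*)/2)$, in $\R^3_z$ (for \eqref{eq:norm 4 DIQ} and \eqref{eq:norm Delta 2 DIQ}), reducing each of the four inequalities to a pointwise-in-$r$ (and $z^*$) integral inequality on $\S^2$. Writing $\tilde f(r,\omega)\coloneqq f(r\omega)$, the identities $|\Pperp{v}\nabla h|^2=r^{-2}|\snabla \tilde h|^2$ and $\nabla\cdot(\Pperp{v}\nabla h)=r^{-2}\Delta_{\S^2}\tilde h$, together with $\|(\Pperp{v}\nabla)^2\log f\|^2_{\mathrm{HS}}=r^{-4}\Gamma_2(\log\tilde f,\log\tilde f)$ (a consequence of $\mathrm{Ric}_{\S^2}=g_{\S^2}$), allow us to rewrite, via \cref{lem:J-diss along L},
\begin{equation*}
    D_{J_1,L_S}(f)=2\int_0^\infty\int_{\S^2} r^{-4}\Gamma_2(\log\tilde f,\log\tilde f)\,\tilde f\,\ud\omega\,\ud r,
\end{equation*}
while \cref{prop:I-diss along Q}, combined with the observation that $\off(z,z^*)=f(z^*+z/2)f(z^*-z/2)$ is \emph{even} in $z$ so that the symmetric log-Sobolev constant $\Lambda_3^{\mathrm{sym}}$ controls $|\snabla\log\off|^2$ on each fixed-$(r,z^*)$ sphere, yields
\begin{equation*}
    D_{I,Q_\gamma}(f,f)\ge \frac{1}{8}\lp 1-\frac{\gamma^2}{4\Lambda_3^{\mathrm{sym}}}\rp\int_{\R^3}\int_0^\infty\int_{\S^2} r^\gamma\Gamma_2(\log\off,\log\off)\,\off\,\ud\omega\,\ud r\,\ud{z^*}.
\end{equation*}
The four left-hand sides convert similarly to $r^{-4}$- or $r^\gamma$-weighted radial integrals of $|\snabla\log\cdot|^4$, $|\snabla\log\cdot|^2$ and $(\Delta_{\S^2}\log\cdot)^2$ against $g$ on $\S^2$.

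For \eqref{eq:norm Delta 2 DJL} and \eqref{eq:norm Delta 2 DIQ} the reduction produces the pointwise inequality $(\Delta_{\S^2}\log g)^2+2|\snabla\log g|^2\le 2\,\Gamma_2(\log g,\log g)$, which is immediate from the trace Cauchy--Schwarz inequality $(\mathrm{tr}\,A)^2\le 2\|A\|^2_{\mathrm{HS}}$ on $2\times 2$ matrices applied to $A=\snabla^2\log g$. Integrating against $r^{-4}\,\ud r$, resp.\ $r^\gamma\,\ud r\,\ud{z^*}$, and inserting the formulas above for $D_{J_1,L_S}$ and the lower bound on $D_{I,Q_\gamma}$ immediately gives both claims, with ample room relative to $\bar C_{\gamma,\Lambda_3^{\mathrm{sym}}}$ in the $Q_\gamma$-case.

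For \eqref{eq:norm 4 DJL} and \eqref{eq:norm 4 DIQ} the crucial step is the pointwise bound on $\S^2$
\begin{equation*}
    T\coloneqq \int_{\S^2}|\snabla\log g|^4\,g\,\ud\omega\le 11\,S,\qquad S\coloneqq \int_{\S^2}\|\snabla^2\log g\|^2_{\mathrm{HS}}\,g\,\ud\omega.
\end{equation*}
Setting $\varphi=\log g$, I would multiply the $\S^2$-Bochner identity $\tfrac12\Delta_{\S^2}|\snabla\varphi|^2=\|\snabla^2\varphi\|^2+|\snabla\varphi|^2+\snabla\varphi\cdot\snabla\Delta_{\S^2}\varphi$ by $g$ and integrate by parts (using $\snabla g=g\,\snabla\varphi$) to reach
\begin{equation*}
    T+3\int_{\S^2}|\snabla\varphi|^2\Delta_{\S^2}\varphi\,g=2S+2V-2U,
\end{equation*}
with $V\coloneqq \int|\snabla\varphi|^2 g$ and $U\coloneqq \int(\Delta_{\S^2}\varphi)^2 g\le 2S$. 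A further integration by parts on the left-hand middle term, combined with the contraction estimate $|\snabla\varphi\cdot\snabla|\snabla\varphi|^2|\le 2\|\snabla^2\varphi\|_{\mathrm{HS}}|\snabla\varphi|^2$, Cauchy--Schwarz, and Young's inequality with parameter $\epsilon=\tfrac13$, yields $T\le 11S-2V$. The additional $18V$ term on the left of \eqref{eq:norm 4 DJL} is then absorbed via the trivial estimate $11S+16V\le 18(S+V)$, matching the right-hand side $9\,D_{J_1,L_S}$; for \eqref{eq:norm 4 DIQ}, integrating $T\le 11(S+V)$ against $r^\gamma\,\ud r\,\ud{z^*}$ and combining with the lower bound on $D_{I,Q_\gamma}$ above produces the prescribed constant $2\bar C_{\gamma,\Lambda_3^{\mathrm{sym}}}$ with room to spare.

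The anticipated main obstacle is precisely the Bochner-plus-integration-by-parts chain in the last step: the sign-indefinite intermediate term $3\int|\snabla\varphi|^2\Delta_{\S^2}\varphi\,g$ must be bounded so that the coefficient of $T$ on the right-hand side remains strictly less than $1$, which forces the Young parameter into a narrow range (and whose sharpness ultimately pins down the numerical constants $9$, $2$, $2\bar C_{\gamma,\Lambda_3^{\mathrm{sym}}}$, $\bar C_{\gamma,\Lambda_3^{\mathrm{sym}}}$ in the statement). All remaining steps amount to bookkeeping of constants.
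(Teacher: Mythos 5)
Your proposal is correct and reaches all four bounds, but the key step for \eqref{eq:norm 4 DJL} and \eqref{eq:norm 4 DIQ} follows a genuinely different path than the paper's. The paper asserts the identity
\begin{equation*}
\int_{\S^2} \snabla \cdot \left(\abs{\snabla \log \tf}^2 \snabla \tf\right) = \int_{\S^2} \left(3\,\Delta_{\S^2} \log\tf \,\abs{\snabla \log\tf}^2 + \abs{\snabla \log\tf}^4\right) \tf = 0,
\end{equation*}
and then uses Cauchy--Schwarz with parameter $\delta=\tfrac13$ plus $(\Delta_{\S^2}\varphi)^2\le 2\norm{\snabla^2\varphi}_{\mathrm{HS}}^2$ to reach $\int_{\S^2}\abs{\snabla\varphi}^4\tf\le 18\int_{\S^2}\norm{\snabla^2\varphi}^2_{\mathrm{HS}}\tf$. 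Be warned that this identity as stated is not actually a computation of $\snabla\cdot X$: one has $\snabla\cdot X = \tf\big(2\,\snabla\varphi^{\!T}\snabla^2\varphi\,\snabla\varphi + \abs{\snabla\varphi}^2\Delta_{\S^2}\varphi + \abs{\snabla\varphi}^4\big)$, and $\snabla\varphi^{\!T}\snabla^2\varphi\,\snabla\varphi$ does not equal $\Delta_{\S^2}\varphi\abs{\snabla\varphi}^2$ pointwise; a test with $\varphi=\eps\cos\theta$ shows the stated integral identity even fails at order $\eps^4$. The conclusion $\int\abs{\snabla\varphi}^4\tf\le 18\int\norm{\snabla^2\varphi}^2_{\mathrm{HS}}\tf$ is nonetheless true, and your Bochner-based chain, giving $T+3\int\abs{\snabla\varphi}^2\Delta_{\S^2}\varphi\,g = 2S+2V-2U$, then $T\le S-V+3\int\norm{\snabla^2\varphi}_{\mathrm{HS}}\abs{\snabla\varphi}^2 g$ after one more integration by parts and $U\le 2S$, and finally $T\le 11S-2V$ by Young's inequality with $\eps=\tfrac13$, is both valid and quantitatively sharper. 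What your route buys is a correct derivation and a better constant ($11$ rather than $18$, which comfortably fits below $\bar C_{\gamma,\Lambda_3^{\mathrm{sym}}}$); what the paper's route would have bought, had the identity held, is brevity. For the second pair of estimates your reduction to $(\mathrm{tr}\,A)^2\le 2\norm{A}^2_{\mathrm{HS}}$ on $2\times2$ matrices is exactly what the paper does. One small caveat: when you absorb $V$ in \eqref{eq:norm 4 DIQ} by passing from $T\le 11S-2V$ to $T\le 11(S+V)$, you should note that the lower bound for $D_{I,Q_\gamma}$ obtained from the log-Sobolev step actually controls $\int r^\gamma\Gamma_2\uff = \int r^\gamma(S+V)$ before one drops the $V$ term, so both $S$ and $S+V$ are available and the bookkeeping closes either way.
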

\begin{proof}
    Consider the vector field $X  \coloneqq \abs{\snabla \log \tf}^2\snabla \tf$ on $\S^2$. Then the divergence theorem yields
    \begin{equation*}
        \int_{\S^2} \snabla \cdot X  = \int_{\S^2} \left(3\Delta_{\S^2} \log\tf \abs{\snabla \log\tf}^2 + \abs{\snabla \log\tf}^4\right) \tf = 0,
    \end{equation*}
    By Cauchy-Schwarz inequality, for any $0 < \delta < 1$, we deduce that
    \begin{equation}\label{eq:last step}
        \int_{\S^2} \abs{\snabla \log\tf}^4\,\tf \le \frac{\tfrac{3}{2}\delta^{-1}}{1 - \tfrac{3}{2}\delta} \int_{\S^2} \left(\Delta_{\S^2} \log\tf\right)^2 \tf \le \frac{3\delta^{-1}}{1 - \tfrac{3}{2}\delta} \int_{\S^2} \norm{\snabla^2 \log\tf}_{\mathrm{HS}}^2 \tf,
    \end{equation}
    and $\delta \coloneqq 1/3$ minimizes the constant on the right-hand side of \eqref{eq:last step} to be $18$. Now \eqref{eq:norm 4 DJL} follows from \eqref{eq:dissipation J beta L} recalling \eqref{eq:Gamma 2 HS}. The second estimate follows from the last step in \eqref{eq:last step}. For the third estimate, recall that $\nabla_v - \nabla_{v*} \mapsto 2\nabla_z$ under the change of variables $(z, z^*) \coloneqq (v-v^*, (v+v^*)/2)$, and thus
    \begin{equation*}
        \abs{\Pperp{v-v^*}\nablavwminus \log\ff}^4 = 16\abs{\Pperp{z} \gradz\log\uff}^4.
    \end{equation*}
    Now, since $\uff(r,\omega, z^*) \coloneqq \off(z,z^*) \coloneqq \ff(v,w)$ is symmetric on $\S^2$, seen as the map $\uff(r,\cdot, z^*)$, we obtain the following practical lower bound for the dissipation by combining \cref{prop:I-diss along Q} with the log-Sobolev inequality on $\S^2$ for symmetric functions \cite[Theorem 1.1]{Ji24};
    \begin{align*}
        D_{I, Q_\gamma}(f,f) &\ge \frac{1}{8}\int_{\R^3}\int_{\S^2}\int_0^\infty r^\gamma \left(\norm{\Gamma_2(\log\uff, \log\uff)}^2_{\mathrm{HS}} - \frac{\gamma^2}{4}\abs{\snabla \log\uff}^2 \right) \uff\,\ud{r}\ud{\omega}\ud{z^*} \\
            &\ge \frac{1}{8}\int_{\R^3}\int_{\S^2}\int_0^\infty r^\gamma \left(1 - \frac{\gamma^2}{4 \Lambda_3^{\mathrm{sym}}}\right)\norm{\snabla^2 \log\uff}^2_{\mathrm{HS}} \uff\,\ud{r}\ud{\omega}\ud{z^*}. \numberthis\label{eq:right hand side sym}
    \end{align*}
    Then the constant of the log-Sobolev inequality on $\S^2$ becomes $\Lambda_3^{\mathrm{sym}} \ge 5.5$, and in particular the right-hand side of \eqref{eq:right hand side sym} is non-negative. The symmetry can be readily verified writing $v = \tfrac{z}{2} + z^*$ and $v^* = -\tfrac{z}{2} + z^*$. The proof now follows from \eqref{eq:last step}; $16 \times 18 \times 8 \times 4 = 9216$.
\end{proof}
\subsection{Non-monotonicity of the standard Fisher information: a counterexample} \label{sec:nonMono}

In this section, we prove~\cref{thm:FisherIncrease}, which shows that, in general, the Fisher information is not a Lyapunov functional for the multi-species Landau equation, even for less singular interactions than the Coulomb interaction.

\begin{proof}[Proof of~\cref{thm:FisherIncrease}]
 We recall the two-species system with one species with infinite mass. The equation for the finite-mass species then reads
\begin{align} \label{eq:InfiniteGamma2}
    \partial_t f = Q(f,f) + Lf, \qquad Lf \coloneqq \nabla \cdot \left(|v|^{2+\gamma}\Pperp{v} \nabla f\right).
\end{align}
The local well-posedness of this equation for regular decaying initial data $f^\circ $ follows as~\cref{subsec:local}. We refer to \cite{HHJW25} for an extensive analysis in a more general setting. 

We will construct a Schwartz function $f^\circ_R$ for the initial datum, such that the Fisher information increases near $t=0$. To this end, we compute the time derivative of the Fisher information
\begin{align} \label{heavyIrepr}
    \frac{\d}{\d t} I(f)|_{t=0} &= \big\langle I'(f^\circ) , Lf^\circ\big\rangle  +\big\langle I'(f^\circ) ,Q(f^\circ, f^\circ)\big\rangle.
\end{align}
From~\cref{prop:I prime L}, we have, along the linear operator $L$,
\begin{equation}\label{IfR1:formula}
\begin{aligned}
    \big\langle I'(f^\circ) , Lf^\circ\big\rangle  &= -2\int_{\R^3} |v|^{2+\gamma}\abs{\Pperp{v}\nabla \left(\hv \cdot \nabla \log f^\circ\right) + \frac{\gamma}{2|v|}\Pperp{v}\nabla \log f^\circ}^2 f^\circ\,\ud{v}\\
        &\quad - 2\int_{\R^3} |v|^{2+\gamma} \left(\norm{\left(\Pperp{v}\nabla\right)^2 \log f^\circ}^2_{\mathrm{HS}} - \frac{\gamma^2}{4|v|^2}\abs{\Pperp{v}\nabla \log f^\circ}^2 \right) f^\circ\,\ud{v},
\end{aligned}
\end{equation}
and for the one along the nonlinear collision operator $Q$; under the change of variables $(z, z^*) \coloneqq (v-v^*, (v+v^*)/2)$, set $\off(z, z^*) \coloneqq (f \otimes f)(v, v^*)$, by~\cref{prop:I-diss along Q} we have 
\begin{align*} 
    \big\langle I'(f^\circ) ,Q(f^\circ, f^\circ)\big\rangle 
    &= \frac{1}{8} \int_{({\R^3})^{2}} |z|^{2+\gamma} \abs{\Pperp{z}\gradz\left(\hz \cdot \gradz \log\off^\circ \right) + \frac{\gamma}{2|z|}\Pperp{z}\gradz \log\off^\circ}^2 \off^\circ \d z \d z^* \\
        &\quad + \frac{1}{8}\int_{({\R^3})^{2}} |z|^\gamma \left(|z|^2\norm{\left(\Pperp{z}\gradz\right)^2 \log\off^\circ}^2_{\mathrm{HS}} - \frac{\gamma^2}{4}\abs{\Pperp{z}\gradz \log\off^\circ }^2 \right) \off^\circ \d z \d z^*\\
        &\quad + \frac{1}{32} \int_{(\R^3)^2} \gradzs \gradz \log\off^\circ  : \left(\gradzs \gradz \log\off^\circ \Az\right) \off^\circ \d z \d z^*.
\end{align*}
We define initial data $f^\circ_R$, $R>0$ using the ansatz
\begin{align*}
    f^\circ_R(v) &= f^\circ_{1,R}(v) +  f^\circ_{2,R}(v),  
\end{align*}
with
\begin{align*}
f^\circ_{R,1}(v) = \exp \left(R |v|^{\frac{|\gamma|}2} e_1 \cdot \hat{v}\right) \chi \lp \frac{v}{r}\rp \qquad f^\circ_{R,2}(v) =\kappa \chi \lp \frac{v}{\tau}\rp,
\end{align*}
where $\chi$ is a radial cutoff function supported on the annulus defined by $\frac12 \leq |x| \leq 2$ and 
\begin{align} \label{rRscaling}
    r= \frac{R^{2/\gamma}}{|\log (2+R)|} .
\end{align} For $R\geq  1$ we choose  $\kappa(R),\tau(R)>0$ such that the function $f^\circ_R$ has unit mass and kinetic energy. 

\medskip 

\noindent
\textbf{Step 1.} We first observe  that $\kappa(R) \rightarrow \kappa_\infty>0$, $\tau(R) \rightarrow \tau_\infty>0$ converge to finite positive values as $R\rightarrow \infty$ . Hence, for  $R$ sufficiently large,  $f^\circ_{R,1}$ and $f^\circ_{R,2}$ have disjoint support and we can compute the dissipations along $L$ separately;
\begin{align*} 
    \big\langle I'(f^\circ_R) , Lf^\circ_R \big\rangle &= \big\langle I'(f^\circ_{R,1}), Lf^\circ_{R,1}\big\rangle + \big\langle I'(f^\circ_{R,2}) , Lf^\circ_{R,2}\big\rangle.
\end{align*}
Using again the convergence of $\kappa(R)$ and $\tau(R)$, we conclude
\begin{align*}
    \limsup_{R\rightarrow \infty}|\big\langle I'(f^\circ_{R,2}) , Lf^\circ_{R,2}\big\rangle| < \infty.
\end{align*}
\textbf{Step 2.}  We have the following identities:
\begin{align*}
    \Pperp{v} \nabla \log f^\circ_{R,1}(v) &= R |v|^{-(1+\gamma/2)} \Pperp{v} e_1 \\
    \Pperp{v} \nabla \left( \hv \cdot \log f^\circ_{R,1}(v)\right) &= R \frac{|\gamma|}{2}|v|^{-(2+\gamma/2)} \Pperp{v} e_1 \\
     \left(\Pperp{v} \nabla\right)^2 \log f^\circ_{R,1}(v) &= -R|v|^{-(2+\gamma/2)} \left(\left(e_1 \cdot \hv\right)\Pperp{v} + \left(\Pperp{v}e_1\right) \otimes \hv\right).
\end{align*}
The functions $f^\circ_{R,1}$ are constructed such that the first term in~\eqref{IfR1:formula} vanishes, and therefore
\begin{align*}
    \big\langle I'(f^\circ_{R,1}), Lf^\circ_{R,1}\big\rangle =- 2\int_{\R^3} |v|^{2+\gamma} \left(\norm{\left(\Pperp{v}\nabla\right)^2 \log f^\circ_{R,1}}^2_{\mathrm{HS}} - \frac{\gamma^2}{4|v|^2}\abs{\Pperp{v}\nabla \log f^\circ_{R,1}}^2 \right) f^\circ_{R,1}\, \d v.
\end{align*}
Computing the derivatives, we find that
\begin{align*}
    \big\langle I'(f^\circ_{R,1}), Lf^\circ_{R,1}\big\rangle &=- 2R^2\int_{\R^3} |v|^{-2} \left(\| \Pi_v^\perp (\hat v\cdot e_1) + \Pi_v^\perp e_1 \otimes \hat v\|^2_{\mathrm{HS}} - \frac{\gamma^2}{4} |\Pi^\perp_{v} e_1|^2  \right) f^\circ_{R,1}\, \d v \\
    &=- 2R^2\int_0^\infty |l|^{-2} \int_{\partial B_l} \lp 2 (\hat v\cdot e_1)^2 - \lp \frac{\gamma^2-4}{4} \rp |\Pi_{\hat v}^\perp e_1|^2 \rp f^\circ_{R,1}(l \hat v)\, \d{\hat v} \d l.
\end{align*}
Using $\gamma^2>8$, we obtain that 
\begin{align*}
    \int_{\partial B_1} \lp 2 (\hat v\cdot e_1)^2 - \lp \frac{\gamma^2-4}{4} \rp |\Pi_{\hat v}^\perp e_1|^2 \rp \d{\hat v} < 0.
\end{align*}
Hence, for  $R>0$ large enough we obtain for some $c_\gamma>0$ that 
\begin{align*}
    |\big\langle I'(f^\circ_{R,1}), Lf^\circ_{R,1}\big\rangle &\geq   c_{\gamma} R^2r .
\end{align*}
\textbf{Step 3.}  We claim that
\begin{align*}
    \big\langle I'(f^\circ_R) , Q(f^\circ_R, f^\circ_R)\big\rangle| \leq C \big(1  + R^2 r^{|\gamma|-1}\big) .
\end{align*}
where $r$ depends on $R$ as in~\eqref{rRscaling}.
 
To this end, we first observe that
\begin{align*}
    |\nabla_v \log f^\circ_{R,1}| \leq {\bf 1}_{|v|\leq 2r}
        C \left(R|v|^{\frac{|\gamma|}{2} -1} + r^{-1}\right), \qquad \norm{\nabla_v^2 \log f^\circ_{R,1}}_{\mathrm{HS}} \leq {\bf 1}_{|v|\leq 2r}
        C \left(R|v|^{\frac{|\gamma|}{2} -2} + r^{-2}\right).
\end{align*}
Inserting this into the formula for $\big\langle I'(f^\circ_R) , Q(f^\circ_R, f^\circ_R)\big\rangle$, the claim follows by straightforward integration.
\medskip

\noindent
\textbf{Step 4.}  We collect the estimates from \textbf{Steps 1-3} and insert them into~\eqref{heavyIrepr}. Courtesy of the scaling relation~\eqref{rRscaling}, we obtain for $R>1$ large enough
\begin{align*}
    \frac{\d}{\d t} I(f)|_{t=0} > c_\gamma R^2r - C \lp 1+R^2 r^{|\gamma|-1} \rp > 0,
\end{align*}
for $R>0$ sufficiently large, establishing non-monotonicity.

The monotonicity of the Fisher information $I(f)$ for strong solutions with even initial data follows quickly from the result of Guillen and Silvestre: Since $f(t,v)=f(t,-v)$ holds for local-in time solution, the Fisher information is dissipated by $L$. To see this, we recall~\cref{prop:I prime L} 
\begin{align*}
    \ \big\langle I'(f) , Lf\big\rangle = &-2\int_{\R^3} |v|^{2+\gamma}\abs{\Pperp{v}\nabla \left(\hv \cdot \nabla \log f\right) + \frac{\gamma}{2|v|}\Pperp{v}\nabla \log f}^2 f\,\ud{v}\\
        &-2\int_{\R^3} |v|^{2+\gamma} \left(\norm{\left(\Pperp{v}\nabla\right)^2 \log f}_{\mathrm{HS}}^2 - \frac{\gamma^2}{4|v|^2}\abs{\Pperp{v}\nabla \log f}^2 \right) f\,\ud{v} \leq 0,  
\end{align*}
where we use that~\eqref{eq:logSob} holds with $\Lambda_3^{\mathrm{sym}} \geq 5.5$ for even functions.
\end{proof}

\subsection{A new Lyapunov functional} \label{sec:Thm3proof}
In this section, we prove~\cref{thm:Lambda dissipation}, which shows that $\Lambda$ defined in \eqref{eq:new_Lyapunov} is a Lyapunov functional for the multi-species Landau equation for singular interactions up to Coulomb.
\begin{proof}[Proof of~\cref{thm:Lambda dissipation}]
As mentioned in \cref{rmk:spherical Fisher}, we estimate the derivative of $J_1$ along  $Q_\gamma$ in terms of the other dissipations, courtesy of \cref{lem:dissipation extra inequality}. It holds
\begin{align*}
    \big\langle J'_1(f), &\,Q_\gamma(f, f)\big\rangle = \int_{\R^3} \frac{\abs{\Pperp{v}\nabla_v \log f}^2}{|v|}\, Q_\gamma(f,f) + 2\, \frac{\Pperp{v}\nabla_v \log f(v)}{|v|} \cdot \Pperp{v}\nabla_v \left(f^{-1} Q_\gamma(f,f)\right) f \, \ud{v} \\
        = &-\int_{\R^3} \frac{\abs{\Pperp{v}\nabla_v \log f}^2}{|v|}\, Q_\gamma(f,f)\, \ud{v} \\
        & -2 \int_{(\R^3)^2} \nabla_v \cdot \left(\frac{\Pperp{v}\nabla_v \log f(v)}{|v|}\right) \frac{\nablavwminus \cdot \left(\Pperp{v-v^*} \nablavwminus  \ff\right)}{|v-v^*|^{-(2 + \gamma)}}\  \ud{v}\ud{v^*} \\
        = &-\int_{(\R^3)^2} \frac{\abs{\Pperp{v}\nabla_v \log f(v)}^2}{|v|} \frac{\nablavwminus \cdot \left(\Pperp{v-v^*} \nablavwminus \ff\right)}{|v-v^*|^{-(2 + \gamma)}} \ud{v}\ud{v^*} \\
        & -2 \int_{(\R^3)^2} \nabla_v \cdot \left(\frac{\Pperp{v}\nabla_v \log f(v)}{|v|}\right) \frac{\nablavwminus \cdot \left(\Pperp{v-v^*} \nablavwminus \ff\right)}{|v-v^*|^{-(2 + \gamma)}} \ud{v}\ud{v^*},
    \end{align*}
where we integrate by parts in the second equality. We recall the relation
\begin{align*}
    \frac{\nablavwminus \cdot \left(\Pperp{v-v^*} \nablavwminus \ff\right)}{f \otimes f} = &\nablavwminus \cdot \left(\Pperp{v-v^*} \nablavwminus \log \ff\right) \\
        &+ \abs{\Pperp{v-v^*} \nablavwminus \log \ff}^2.
\end{align*}
Hölder and triangle inequalities then yield
\begin{align*}
    \big\langle J'_1(f), \,Q_\gamma(f, f)\big\rangle &\le \left( \int_{\R^3} \frac{\abs{\Pperp{v}\nabla_v \log f(v)}^4}{|v|^2} \left(\int_{\R^3} \frac{f(v^*)}{|v-v^*|^{-(2 + \gamma)}} \ud{v^*}\right) f(v) \,\ud{v}\right)^{1/2} \\
        & \quad \times \left[\left(\int_{(\R^3)^2} \frac{\abs{\nablavwminus \cdot \left(\Pperp{v-v^*} \nablavwminus \log \ff\right)}^2}{|v-v^*|^{-(2 + \gamma)}} \ff \ud{v}\ud{v^*}\right)^{1/2} \right. \\
        &\qquad + \left.\left(\int_{(\R^3)^2} \frac{\abs{\Pperp{v-v^*} \nablavwminus \log \ff}^4}{|v-v^*|^{-(2 + \gamma)}} \ff \ud{v}\ud{v^*}\right)^{1/2}\right] \\
        & \quad + 2\left( \int_{\R^3} \frac{\abs{\nabla_v \cdot \left(\Pperp{v}\nabla_v \log f(v)\right)}^2}{|v|^2} \left(\int_{\R^3} \frac{f(v^*)}{|v-v^*|^{-(2 + \gamma)}} \ud{v^*}\right) f(v) \,\ud{v}\right)^{1/2} \\
        & \quad \times \left[\left(\int_{(\R^3)^2} \frac{\abs{\nablavwminus \cdot \left(\Pperp{v-v^*} \nablavwminus \log \ff\right)}^2}{|v-v^*|^{-(2 + \gamma)}} \ff \ud{v}\ud{v^*}\right)^{1/2} \right. \\
        &\qquad + \left.\left(\int_{(\R^3)^2} \frac{\abs{\Pperp{v-v^*} \nablavwminus \log \ff}^4}{|v-v^*|^{-(2 + \gamma)}} \ff \ud{v}\ud{v^*}\right)^{1/2}\right] \\
        &\le C_{\gamma,\Lambda_3^{\mathrm{sym}}} \norm{\int_{\R^3} |\cdot - \,v^*|^{2+\gamma} f(v^*) \, \ud{v^*}}_{L^\infty}^{1/2} D^{1/2}_{J_1, L_S}(f) D^{1/2}_{I, Q_\gamma}(f), \numberthis \label{eq:before interpolation sobolev}
\end{align*}
where in the last inequality we used \eqref{eq:norm 4 DJL}, \eqref{eq:norm Delta 2 DJL}, and \eqref{eq:norm Delta 2 DIQ} from \cref{lem:dissipation extra inequality}. The constant is given by
\begin{equation*}
    C_{\gamma,\Lambda_3^{\mathrm{sym}}} \coloneqq 5 \lp 1 + \sqrt{2}\rp \sqrt{ \bar C_{\gamma, \Lambda_3^{\mathrm{sym}}}}.
\end{equation*}
Recall that $-3 \le \gamma < -2$. Therefore, by Young's inequality, splitting $|\cdot - v^*|$ into two regions, one close and one away from the origin, we get
\begin{equation}\label{eq:convolution l infty}
    \norm{\int_{\R^3} \frac{f(v^*)}{|\cdot - \,v^*|^{-(2+\gamma)} }\, \ud{v^*}}_{L^\infty} \le 1 + \norm{|\cdot|^{2+\gamma} \1_{|\cdot| < 1}}_{L^{-(3-\delta)/(2 + \gamma)}} \norm{f}_{L^{(3-\delta)/(5+\gamma-\delta)}} = 1 + C_{\gamma,\delta}\norm{f}_{L^{(3-\delta)/(5+\gamma-\delta)}}
\end{equation}
for any $\delta > 0$ small enough ($0 < \delta < 5+\gamma$) and
\begin{equation*}
    C_{\gamma,\delta} \coloneqq \left(\frac{4\pi}{\delta}\right)^{-\frac{(2+\gamma)}{(3-\delta)}},
\end{equation*}
where the choice of exponents stems from the criticality $|\cdot|^{-1} \1_{|\cdot| \le 1} \notin L^3$. Indeed, the exponent of the Lebesgue norm of $f$ should be as small as possible to control the norm itself by Fisher information with a small exponent, through the Sobolev inequality \cite{CFMP09} and the $L^1 \cap L^3$ interpolation. Let $C_S$ be the optimal constant of the Sobolev inequality, then
\begin{equation*}
    \norm{f}_{L^{(3-\delta)/(5+\gamma-\delta)}} \le \norm{f}_{L^3}^{2\theta_{\gamma,\delta}} = \norm{\sqrt{f}}^{4\theta_{\gamma,\delta}}_{L^{6}} \le \left(C_S^2\norm{\sqrt{f}}^2_{\dot{H}^{1}}\right)^{2\theta_{\gamma,\delta}} = \left(\frac{C_S^2}{4} I(f)\right)^{2\theta_{\gamma,\delta}}, \:\: \theta_{\gamma,\delta} \coloneqq -\frac{3(2+\gamma)}{4(3-\delta)}.
\end{equation*}
Recalling \eqref{eq:before interpolation sobolev}, we obtain
\begin{equation*}
    \big\langle J'_1(f), \,Q_\gamma(f, f)\big\rangle \le C_{S,\gamma,\Lambda_3^{\mathrm{sym}}} \left[\left(1 +I(f)\right)^{2\theta_{\gamma,\delta}}D_{J_1, L_S}(f) D_{I, Q_\gamma}(f)\right]^{1/2}
\end{equation*}
with
\begin{equation*}
    C_{S,\gamma,\Lambda_3^{\mathrm{sym}}, \delta} \coloneqq C_{\gamma,\Lambda_3^{\mathrm{sym}}} \times \left(1 + C_{\gamma,\delta}\left(\tfrac{C_S^2}{4}\right)^{2\theta_{\gamma,\delta}}\right)^{{\frac{1}{2}}}.
\end{equation*}
Note that $\Lambda(f)$, defined in~\eqref{eq:new_Lyapunov} is non-negative due to the Boltzmann relative entropy. We thus have the upper bound $I(f) \le \Lambda(f)$ and
\begin{align*}
    \big\langle J'_1(f), \,Q_\gamma(f, f)\big\rangle &\le C_{S,\gamma,\Lambda_3^{\mathrm{sym}},\delta}  \left[\left(1 + \Lambda(f)\right)^{2 \theta_{\gamma,\delta}}D_{J_1, L}(f) D_{I, Q_\gamma}(f, f)\right]^{1/2} \\
        &\le C_{S,\gamma,\Lambda_3^{\mathrm{sym}},\delta}\left(1 + \Lambda(f)\right)^{\theta_{\gamma,\delta}} \left(\frac{\kappa}{2}D_{J_1, L}(f) + \frac{1}{2\kappa}D_{I, Q_\gamma}(f, f) \right). \numberthis\label{eq:upper bound J prime Q}
\end{align*}
Moreover, by \cref{prop:I prime L} for the singularity $|\cdot|^{-1}$ combined with the log-Sobolev inequality, we obtain the following bound by interpolation;
\begin{align*}
    \big\langle I'(f) , Lf\big\rangle &\le 2\left(\frac{9}{4} - 2\right)\int_{\R^3} \frac{\abs{\Pperp{v}\nabla \log f}^2}{\abs{v}^3} f \\
        &\le \frac{1}{2} \left(\int_{\R^3} \frac{\abs{\Pperp{v}\nabla \log f}^2}{\abs{v}} f\right)^{1/3}\left(\int_{\R^3} \frac{\abs{\Pperp{v}\nabla \log f}^2}{\abs{v}^4} f\right)^{2/3}.
\end{align*}
Hence, recalling \cref{lem:H-diss along L} and \eqref{eq:norm 4 DJL}, we get by Young's inequality for any $\eta > 0$,
\begin{equation}\label{eq:bound I prime L in terms of dissipations of H and Q}
    \big\langle I'(f) , L_Sf\big\rangle \le 2^{-5/3} \left(\eta^{-2} D_{H, L_S}(f)\right)^{1/3} \left(\eta\,D_{J_1, L_S}(f)\right)^{2/3} \le 2^{-5/3} \left(\frac{1}{3\eta^{2}} D_{H, L_S}(f) + \frac{2\eta}{3} D_{J_1, L_S}(f)\right).
\end{equation}
Now, we can control the evolution of the new functional $\Lambda$. For this, we assume that
\begin{equation}\label{eq:assume Lambda}
    1 + \Lambda(f) \le C^\circ R
\end{equation} for some constant $C^\circ > 0$ large enough; note that this holds initially at time $t=0$ with 
\begin{equation*}
    C^\circ \coloneqq 1 + I(f^\circ) + J(f^\circ) + \HRel{f^\circ}{\mathcal{M}_{T^\circ,1}}
\end{equation*}
if we furthermore assume that $a \le 1$ and $R \ge 1$. Then, its time derivative is given by
\begin{align*}
    \frac{\d}{\d t} &\Lambda(f) \\ &= -D_{I,Q_\gamma}(f) + c_{ei}\big\langle I', L_S(f)\big\rangle + a\big\langle J'_1, Q_\gamma(f, f)\big\rangle - ac_{ei} D_{J_1, L_S}(f) - RD_{H, Q_\gamma}(f, f) - Rc_{ei}D_{H, L_S}(f) \\
        &\le -D_{I,Q_\gamma}(f) + \frac{c_{ei}}{2^{5/3}} \left(\frac{1}{3\eta^{2}} D_{H, L_S}(f) + \frac{2\eta}{3} D_{J_1, L_S}(f)\right) + \frac{ aC_{S,\gamma,\Lambda_3^{\mathrm{sym}},\delta}\left(C^\circ R\right)^{\theta_{\gamma,\delta}}\kappa}{2} D_{J_1, L_S}(f) \\
        &\quad + \frac{ a\,C_{S,\gamma,\Lambda_3^{\mathrm{sym}},\delta}\left(C^\circ R\right)^{\theta_{\gamma,\delta}}}{2\kappa}D_{I, Q_\gamma}(f,f) 
        - ac_{ei} D_{J_1, L_S}(f) - RD_{H, Q_\gamma}(f,f) - Rc_{ei}D_{H, L_S}(f) \\
        &= \,\left(\frac{a\,C_{S,\gamma,\Lambda_3^{\mathrm{sym}},\delta}\left(C^\circ R\right)^{\theta_{\gamma,\delta}}}{2\kappa} - 1\right) D_{I, Q_\gamma}(f,f) \numberthis\label{eq:dissipation coef I Q}\\
        &\quad + \left(\frac{c_{ei}}{2^{2/3}\,3}\eta +  \frac{ a\,C_{S,\gamma,\Lambda_3^{\mathrm{sym}},\delta}\left(C^\circ R\right)^{\theta_{\gamma,\delta}}\,\kappa}{2} - a c_{ei} \right)D_{J_1, L_S}(f) \numberthis\label{eq:dissipation coef J L}\\
        &\quad + \left(\frac{c_{ei}}{2^{5/3}\,3}\,\eta^{-2} - Rc_{ei}\right)D_{H, L_S}(f) - RD_{H, Q_\gamma}(f,f), \numberthis\label{eq:dissipation coef H Q}
\end{align*}
where in the inequality we used the upper bounds \eqref{eq:upper bound J prime Q} and \eqref{eq:bound I prime L in terms of dissipations of H and Q}.

\bigskip 

\noindent
\textbf{Case 1: $-3 < \gamma < -2$.} We choose the parameters $\kappa$ and $\eta$ such that all prefactors are non-positive. For this, we first set
\begin{equation*}
    \kappa \coloneqq a\,C_{S,\gamma,\Lambda_3^{\mathrm{sym}},\delta} \left(C^\circ R\right)^{\theta_{\gamma,\delta}}, \quad \text{hence} \quad \frac{1}{2} = \frac{a\,C_{S,\gamma,\Lambda_3^{\mathrm{sym}},\delta}\left(C^\circ R\right)^{\theta_{\gamma,\delta}}}{2\kappa} 
\end{equation*}
so that the coefficient \eqref{eq:dissipation coef I Q} of $D_{I,Q_\gamma}(f,f)$ becomes non-positive. Second, we set
\begin{equation*}
    \eta \coloneqq \frac{1}{\sqrt{2^{2/3}3 R}}, \quad \text{hence} \quad \frac{c_{ei}}{2^{5/3}\,3}\, \eta^{-2} = \frac{Rc_{ei}}{2}
\end{equation*}
so that the coefficient \eqref{eq:dissipation coef H Q} of $D_{H,Q_\gamma}(f,f)$ becomes $-R/2$. Third, we need to ensure that the coefficient of $D_{J_1,L_S}(f)$ is non-positive; substituting $\kappa$ and $\eta$ in \eqref{eq:dissipation coef J L}, this amounts to
\begin{equation*}
    \frac{c_{ei}}{6\sqrt{3}} \frac{1}{\sqrt{R}} + \frac{a^2\, C^2_{S,\gamma,\Lambda_3^{\mathrm{sym}},\delta} \left(C^\circ R\right)^{2\theta_{\gamma,\delta}}}{2} - ac_{ei} \le 0.
\end{equation*}
We can absorb the first term setting $\frac{a}{4} \coloneqq \tfrac{1}{6\sqrt{3}}\tfrac{1}{\sqrt{R}}$ so that it remains to verify
\begin{equation}\label{eq:to verify}
    a\, C^2_{S,\gamma,\Lambda_3^{\mathrm{sym}},\delta} \left(C^\circ R\right)^{2\theta_{\gamma,\delta}} \le \frac{c_{ei}}{2}.
\end{equation}
In the case where $-3 < \gamma < -2$, we can choose $0 <\delta < 3(3+\gamma)$ for which $\theta_{\gamma,\delta} < 1/4$, hence \eqref{eq:to verify} holds for $R > 0$ large enough. This choice satisfies in particular $\delta < 5 + \gamma$ and the $L^\infty$-estimate \eqref{eq:convolution l infty} is thus valid. Then
\begin{equation*}
    \frac{\d}{\d t} \Lambda(f) \le -\frac{1}{2}D_{I,Q_\gamma}(f,f) - \frac{ac_{ei}}{2}D_{I,L_S}(f) - \frac{R}{2}D_{H, Q_\gamma}(f,f) - RD_{H, L_S}(f) \le 0.
\end{equation*}
Recall that we assumed a priori \eqref{eq:assume Lambda}, namely that $1 + \Lambda(f) \le C^\circ R$, which holds at time $t=0$. This holds a posteriori for all time since $\Lambda$ is non-decreasing given $1 + \Lambda^\circ(f) \le C^\circ R$, which concludes the proof in that case.
\bigskip

\noindent
{\bf Case 2: $\gamma = -3$.} For $\lambda>0$, consider the rescaled function $f_\lambda $ given by
\begin{align*}
    f_\lambda(v) = \lambda^{-3} f(v/\lambda).
\end{align*}
Pick $\lambda>0$ such that $I(f^\circ_\lambda)=1$. By scale-invariance of the equation, it suffices to prove the claim for $f_\lambda$, and we consider
\begin{equation} \label{eq:Ccirclambda}
    C^\circ_\lambda  \coloneqq 1 + I(f^\circ_\lambda) + J(f^\circ_\lambda ) + \HRel{f^\circ_\lambda }{\mathcal{M}_{T^\circ/\lambda^2,1}}.
\end{equation}

Further, we remark that for $\gamma=-3$ we have  $\theta_{-3,\delta} = \tfrac{1}{4}\tfrac{3}{3-\delta}$. Now we pick $R=1$ and fix the other parameters such that the prefactors in~\eqref{eq:dissipation coef I Q} and~\eqref{eq:dissipation coef H Q} vanish; that is, 
\begin{equation*}
    \kappa \coloneqq \frac{a\,C_{S,-3,\Lambda_3^{\mathrm{sym}},\delta} \left(C^\circ_\lambda \right)^{\theta_{\gamma,\delta}}}{2}, \quad \eta \coloneqq \frac{1}{\sqrt{2^{5/3}3}},
\end{equation*}
and it remains to determine constants such that 
\begin{equation*}
    \frac{c_{ei}}{12\sqrt{6}} + \frac{a^2\, C^2_{S,-3,\Lambda_3^{\mathrm{sym}},\delta} \left(C^\circ_\lambda  \right)^{2\theta_{\gamma,\delta}}}{4} - ac_{ei} \le 0.
\end{equation*}
We set $a \coloneqq \tfrac{1}{6\sqrt{6}}$ and the last inequality holds as soon as
\begin{equation} \label{eq:cei_bound}
    \frac{1}{9\sqrt{6}} \left(C^\circ_\lambda \right)^{3/(6-2\delta)} C^2_{S,-3,\Lambda_3^{\mathrm{sym}},\delta}  \le c_{ei},
\end{equation}
Hence, we obtain monotonicity of the functional $\Lambda(f)$ if the inequality above holds for some $0 < \delta < 2$.  We finally obtain the well-posedness criterion~\eqref{eq:Coulombcondition} by picking $\delta=\frac32$, and  recasting the condition on $f^\circ_\lambda$ in terms of $f^\circ$.
\end{proof}
\begin{remark} 
We can obtain an explicit value for the constant $K$ in~\eqref{eq:Coulombcondition}, by inserting $\delta=\frac32$ in~\eqref{eq:cei_bound} which bounds $K \leq  3 \times 10^5 $.
\end{remark}

\section*{Acknowledgements}
J. Junné is supported by Dutch Research Council (NWO) \href{https://www.nwo.nl/en/projects/ocenwm20251}{\includegraphics[height=\fontcharht\font`\B]{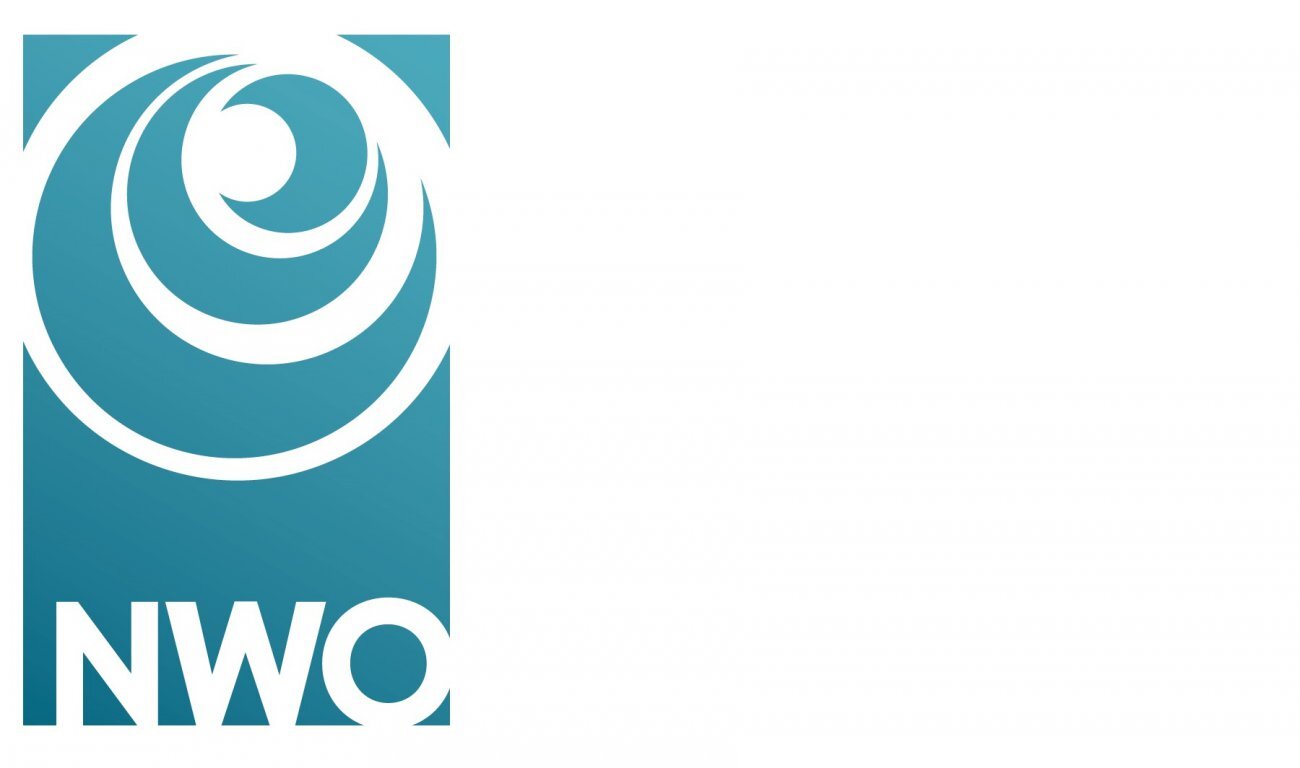}} \hspace{-10pt} under the Open Competitie ENW project Interacting particle systems and Riemannian geometry -- OCENW.M20.251. H. Yolda\c{s} is supported by the Dutch Research Council (NWO) \href{https://www.nwo.nl/en/projects/viveni222288}{\includegraphics[height=\fontcharht\font`\B]{image.jpeg}} \hspace{-10pt} under the NWO-Talent Programme Veni ENW project MetaMathBio -- VI.Veni.222.288. R. Winter and H. Yolda\c{s} would like to thank the Isaac Newton Institute for Mathematical Sciences for the support and hospitality during the INI Retreats programme when work on this paper was undertaken. This work was supported by EPSRC Grant Number EP/Z000580/1. 
For the purpose of open access, the authors have applied a Creative Commons Attribution (CC-BY) licence to any Author Accepted Manuscript version arising from this submission.

\newpage 

\appendix
\section{Spherical calculus}\label{sec:appendixA}
In this appendix, we gather some spherical calculus identities that will be used throughout the manuscript.

We decompose the gradient as
\begin{equation*}
    \nabla_z = \left(\hz \otimes \hz\right)\nabla_z + \Pperp{z} \nabla_z = \hz \rpartial + \frac{1}{|z|}\hznabla,
    \quad \hz \coloneqq \frac{z}{|z|},
    \quad \rpartial \coloneq \hz \cdot \nabla_z, 
    \quad \hznabla \coloneq |z|\Pperp{z} \nabla_z. 
\end{equation*}
If we further perform the change of variables $z \mapsto r\omega$ with $(r, \omega) \coloneqq (|z|, \hz)$, then for scalar functions we have
\begin{equation}\label{eq:z to spherical on scalars}
    \hznabla \mapsto \snabla, 
    \quad \rpartial \mapsto \partial_r,
    \quad \gradz \mapsto \omega \partial_r + \frac{1}{r}\snabla, 
\end{equation}
We denote by $\tests(r,\omega) \coloneqq g(z)$ a smooth function on $\R^3$ in spherical coordinates. The following handy relations hold;
\begin{equation*}
    \rpartial \hz = 0, \quad \hznabla |z| = 0, \quad \hznabla \hz = \Pperp{z}, \quad \rpartial \hznabla\, \testf = \hznabla \rpartial\, \testf.
\end{equation*}
Most of the relations in the manuscript contain $\log$; we write the sequel of the appendix that way, but these relations are of course general.
\begin{lemma}[Spherical Hessian and Laplacian identities]\label{lem:spherical Hess and Delta}
    \begin{equation}\label{eq:spherical hessian formula}
        \frac{1}{r^2}\snabla^2 \log\tests = \Pperp{z} \nabla^2_z \log\testf\, \Pperp{z} - \frac{1}{|z|}\rpartial \log\testf\, \Pperp{z}.
    \end{equation}
    In particular,
    \begin{equation}\label{eq:nabla nabla omega traced pi is spherical Laplacian}
       \frac{1}{r^2} \Delta_{\S^2} \log\tests = \Pperp{z} : \left(\Pperp{z} \nabla^2_z \log\testf\, \Pperp{z} - \frac{1}{|z|}\rpartial \log\testf\, \Pperp{z}\right).
    \end{equation}
\end{lemma}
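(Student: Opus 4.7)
My plan is to prove the spherical Hessian identity~\eqref{eq:spherical hessian formula} by a direct chain-rule computation along geodesics of $\S^2$, and then deduce~\eqref{eq:nabla nabla omega traced pi is spherical Laplacian} by taking the trace with respect to the round metric, which amounts to a Frobenius contraction against $\Pperp{z}$.

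Concretely, fix $r>0$ and identify $\phi(\omega) \coloneqq \bar g(r,\omega) = g(r\omega)$ as a function on $\S^2$. For $X \in T_\omega \S^2$ (so $X\cdot \omega = 0$), I would compute $\snabla^2 \phi(X,X)$ using the intrinsic characterization
\[
  \snabla^2 \phi(X,X) = \frac{d^2}{dt^2}\phi(\omega(t))\Big|_{t=0},
\]
where $\omega(t)$ is the geodesic on $\S^2$ with $\omega(0)=\omega$ and $\dot{\omega}(0)=X$. The sphere geodesic equation gives $\ddot{\omega}(0) = -|X|^2 \omega$. Applying the Euclidean chain rule to $t \mapsto g(r\omega(t))$ yields
\[
  \snabla^2 \phi(X,X) \;=\; r^2\, \nabla_z^2 g(r\omega)[X,X] \;-\; r\,|X|^2\,(\omega \cdot \nabla_z g)(r\omega).
\]
Since $X\in T_\omega \S^2 = \mathrm{Range}(\Pperp{z})$, we have $\nabla_z^2 g[X,X] = (\Pperp{z}\nabla_z^2 g\,\Pperp{z})[X,X]$ and $|X|^2 = \Pperp{z}[X,X]$, while $\omega \cdot \nabla_z g = \rpartial g$ and $r = |z|$. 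Rewriting as bilinear forms on $T_\omega\S^2$ gives
\[
  \frac{1}{r^2}\,\snabla^2 \phi \;=\; \Pperp{z}\nabla_z^2 g\,\Pperp{z} \;-\; \frac{1}{|z|}\,\rpartial g\,\Pperp{z},
\]
which is exactly~\eqref{eq:spherical hessian formula} applied to $\log \testf$.

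For the Laplacian identity, I would use that $\Delta_{\S^2}\phi$ is the trace of $\snabla^2 \phi$ with respect to the induced round metric. Since both sides of~\eqref{eq:spherical hessian formula} are symmetric bilinear forms supported on the tangent space to $\S^2$, the trace is obtained by contracting with the orthogonal projector $\Pperp{z}$ via the Frobenius product, yielding
\[
  \frac{1}{r^2}\,\Delta_{\S^2} \log\tests \;=\; \Pperp{z} : \snabla^2 \log\tests / r^2 \;=\; \Pperp{z}:\!\left(\Pperp{z}\nabla_z^2 \log\testf\,\Pperp{z} - \frac{1}{|z|}\rpartial \log\testf\,\Pperp{z}\right),
\]
which is~\eqref{eq:nabla nabla omega traced pi is spherical Laplacian}. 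There is no real obstacle here; the only thing to watch is the bookkeeping of the factor $r$ in the chain rule and the geodesic equation, together with the identification of the intrinsic trace with the Frobenius contraction against $\Pperp{z}$, both of which are routine once the geodesic picture is in place.
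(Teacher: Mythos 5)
Your proof is correct, and it takes a genuinely different route for the Hessian identity~\eqref{eq:spherical hessian formula}. The paper starts from a black-box formula (citing \cite{AMT13}) expressing the extrinsic spherical Hessian via the ambient projection $\Pperp{z}$ and the directional derivative $D_\sigma \Pperp{z}$, and then simplifies using $D_\sigma \hz = \sigma/|z|$, $\Pperp{z}\sigma = \sigma$, and $\Pperp{z}\hz = 0$. You instead derive the formula from scratch via the geodesic characterization $\snabla^2\phi(X,X) = \tfrac{d^2}{dt^2}\phi(\omega(t))|_{t=0}$, which makes the factor $-\tfrac{1}{|z|}\rpartial\log\testf\,\Pperp{z}$ appear transparently from the second fundamental form of $\S^2$, encoded in $\ddot\omega(0) = -|X|^2\omega$. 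Your version is more self-contained (no external reference needed) and pinpoints the geometric origin of the curvature correction; the paper's version is somewhat shorter but defers the heart of the matter to a citation. One remark: your derivation establishes~\eqref{eq:spherical hessian formula} as an identity of symmetric bilinear forms on $T_\omega\S^2$, which is what the matrix identity means once one notes that both sides annihilate $\hz$; you flag this correctly. For the Laplacian identity~\eqref{eq:nabla nabla omega traced pi is spherical Laplacian}, your argument (trace of the Hessian equals Frobenius contraction against $\Pperp{z}$) coincides with the paper's observation that $\Pperp{z} = \sigma_1\otimes\sigma_1 + \sigma_2\otimes\sigma_2$ for any orthonormal frame $\{\sigma_1,\sigma_2\}$ of $T_{\hz}\S^2$.
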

\begin{proof}
    We have (see e.g. \cite{AMT13}) that
    \begin{equation*}
        \left(\frac{1}{r^2}\snabla^2 \log\tests\right) \sigma = \left(\Pperp{z} \nabla^2_z \log\testf\right) \sigma + \Pperp{z} D_\sigma \Pperp{z} \nabla_z \log\testf, \quad \sigma \in T_{\hz}(\mathbb{S}^2) \subset \R^3.
    \end{equation*}
    Now, since $\Pperp{z} \sigma = \sigma$,
    \begin{equation*}
        \left(\Pperp{z} \nabla^2_z \log\testf\right) \sigma = \left(\Pperp{z} \nabla^2_z \log\testf\, \Pperp{z}\right) \sigma,
    \end{equation*}
    and, using $D_\sigma \hz = (\nabla_z \hz)\sigma = \tfrac{1}{r}\Pperp{z} \sigma = \tfrac{1}{r}\sigma$, we obtain
    \begin{equation*}
        \Pperp{z} D_\sigma \Pperp{z} \nabla_z \log\testf 
        = -\frac{1}{|z|}\Pperp{z}\left(\sigma \otimes \hz + \hz \otimes \sigma\right) \nabla_z \log\testf 
        = - \frac{1}{|z|} \left(\sigma \otimes \hz\right) \nabla_z \log\testf
        = -\frac{1}{|z|}\rpartial \log\testf\, \Pperp{z} \sigma.
    \end{equation*}
    \eqref{eq:nabla nabla omega traced pi is spherical Laplacian} follows noticing that $\Pperp{z} = \sigma_1 \otimes \sigma_1  + \sigma_2 \otimes \sigma_2$ for any orthonormal basis $\{\sigma_1, \sigma_2\}$ of $T_{\hz}\S^2 \subset \R^3$.
\end{proof}
\begin{lemma}[Decomposition of the Euclidean Hessian]\label{lem:decomposition Euclidean Hessian}
    \begin{align*}
    \nabla_z^2 \log\testf &= \rpartial^2 \log\testf\, \hz \otimes \hz + \frac{2}{|z|} \left(\hz \otimes_{\mathrm{sym}}\! \hznabla \rpartial \log\testf\right) - \frac{2}{|z|^2} \left(\hz \otimes_{\mathrm{sym}} \!\hznabla \log\testf\right)
        \\ &\quad + \left(\Pperp{z} \nabla^2_z \log\testf\, \Pperp{z} - \frac{1}{|z|}\rpartial \log\testf\, \Pperp{z}\right) + \frac{1}{|z|}\rpartial \log\testf\, \Pperp{z}.
    \end{align*}
\begin{proof}
    We decompose the Euclidean Hessian as
    \begin{equation*}
        \nabla^2_z \log\testf = \nabla^2_z \log\testf \left(\hz \otimes \hz\right) + \nabla^2_z \log\testf\, \Pperp{z}.
    \end{equation*}
    One checks that
    \begin{equation*}
        \nabla^2_z \log\testf \left(\hz \otimes \hz\right) = \rpartial^2 \log\testf\, \hz \otimes \hz + \frac{1}{|z|} \hznabla \rpartial \log\testf \otimes \hz - \frac{1}{r^2} \hznabla \log\testf \otimes \hz.
    \end{equation*}
    Similarly,
    \begin{equation*}
        \nabla^2_z \log\testf\, \Pperp{z} = \left( \hz \otimes \hz\right)\nabla^2_z \log\testf\, \Pperp{z} + \Pperp{z} \nabla^2_z \log\testf\, \Pperp{z},
    \end{equation*}
    where
    \begin{equation*}
        \left( \hz \otimes \hz\right)\nabla^2_z \log\testf\, \Pperp{z} = \frac{1}{|z|} \hz \otimes \hznabla \rpartial \log\testf - \frac{1}{r^2} \hz \otimes \hznabla \log\testf.
    \end{equation*}
    We conclude recalling \eqref{eq:spherical hessian formula}.
\end{proof}
\end{lemma}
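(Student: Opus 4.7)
The plan is to start from the identity $\Id = \hz \otimes \hz + \Pperp{z}$ applied on both sides of the Euclidean Hessian and expand
\[
\nabla_z^2 \log\testf = \left(\hz \otimes \hz + \Pperp{z}\right) \nabla_z^2 \log\testf \left(\hz \otimes \hz + \Pperp{z}\right),
\]
which produces four contributions indexed by radial-radial, radial-spherical, spherical-radial, and purely spherical directions. Each piece is then computed using the elementary relations $\nabla_z \hz = \tfrac{1}{|z|}\Pperp{z}$, $\Pperp{z} \hz = 0$, and $\hz \cdot \hznabla = 0$ collected at the start of Appendix~\ref{sec:appendixA}.

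The key preliminary identity I would establish first is
\[
\nabla_z^2 u\, \hz = \rpartial^2 u\, \hz + \frac{1}{|z|}\hznabla \rpartial u - \frac{1}{|z|^2}\hznabla u,
\]
obtained by differentiating $\rpartial u = \hz \cdot \nabla_z u$ in $z$ and applying $\nabla_z \hz = \tfrac{1}{|z|}\Pperp{z}$, written here with $u = \log\testf$. The radial-radial contribution is then the scalar contraction $\hz \cdot (\nabla_z^2 u\, \hz) = \rpartial^2 u$ (since $\hz \cdot \hznabla = 0$), producing the term $\rpartial^2 \log\testf\,(\hz \otimes \hz)$. The two mixed contributions are transposes of one another by symmetry of the Hessian, and combining them yields exactly the symmetrized tensor pieces $\tfrac{2}{|z|}(\hz \otimes_{\mathrm{sym}} \hznabla \rpartial \log\testf) - \tfrac{2}{|z|^2}(\hz \otimes_{\mathrm{sym}} \hznabla \log\testf)$. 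The remaining contribution is the purely spherical projection $\Pperp{z}\nabla_z^2 \log\testf\, \Pperp{z}$.

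To make the intrinsic spherical Hessian visible in the final expression, I would invoke Lemma~\ref{lem:spherical Hess and Delta}, which identifies
\[
\Pperp{z}\nabla_z^2 \log\testf\, \Pperp{z} - \frac{1}{|z|}\rpartial \log\testf\, \Pperp{z} = \frac{1}{r^2}\snabla^2 \log\tests.
\]
Adding and subtracting $\tfrac{1}{|z|}\rpartial \log\testf\, \Pperp{z}$ then yields the precise form stated in the lemma.

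The argument is essentially a routine orthogonal decomposition with no genuine obstacle; the only care needed is in consistently distributing $\hz \otimes \hz$ and $\Pperp{z}$ on both sides of $\nabla_z^2 \log\testf$, and in recognizing that the Hessian's symmetry collapses the two mixed contributions into the symmetrized tensor form.
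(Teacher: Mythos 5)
Your proposal is correct and follows essentially the same route as the paper: both arguments amount to inserting $\Id = \hz\otimes\hz + \Pperp{z}$ on either side of the Hessian, computing $\nabla_z^2\log\testf\,\hz$ via the chain rule with $\nabla_z\hz = \tfrac{1}{|z|}\Pperp{z}$, using the symmetry of the Hessian to pair the two mixed blocks into the $\otimes_{\mathrm{sym}}$ terms, and then invoking \eqref{eq:spherical hessian formula} to recast the purely tangential block. The only difference is bookkeeping: you write the four blocks of the two-sided decomposition from the start, whereas the paper splits first from the right and then further splits only the $\nabla_z^2\log\testf\,\Pperp{z}$ block from the left — this is cosmetic and does not change the substance.
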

\begin{lemma}\label{lem:decomposition Hessian-ish}
    \begin{align*}
    \nabla_z\left(|z|^{2+\gamma}\Pperp{z}\nabla_z \log\testf\right) &= -2|z|^{2+\gamma} \left(\hz \otimes_{\mathrm{sym}}\! \hznabla \log\testf\right) + \left(2 + \gamma\right) |z|^\gamma \hz \otimes \hznabla \log\testf \\
        &\quad + |z|^{1+\gamma}\hz \otimes \hznabla\,\rpartial \log\testf + |z|^{2+\gamma} \left( \Pperp{z} \nabla_z^2 \log\testf\, \Pperp{z} - \frac{1}{|z|} \rpartial \log\testf\, \Pperp{z}\right).
\end{align*}
\begin{proof}
    We have
\begin{align*}
    \nabla_z\left(|z|^{2+\gamma}\Pperp{z}\nabla_z \log\testf\right) &= \left(\hz\rpartial + \frac{1}{|z|}\hznabla\right) \left(|z|^{1+\gamma} \hznabla \log\testf\right) \\
        &= \left(1+\gamma\right)|z|^\gamma \hz \otimes \hznabla \log\testf + |z|^{1+\gamma}\hz \otimes \hznabla\rpartial \log\testf + |z|^\gamma\hznabla^2 \log\testf
\end{align*}
with
\begin{equation}\label{eq:nabla omega of nabla omega identity}
    \hznabla^2 \log\testf = |z|^2 \left(\Pperp{z} \nabla_z^2 \log\testf \Pperp{z} - \frac{1}{|z|} \rpartial \log\testf \Pperp{z}\right) - \hznabla \log\testf \otimes \hz,
\end{equation}
were we used that
\begin{equation*}
    \hznabla \Pperp{z} = -3 \left(\Pperp{z}\! \otimes \hz\right)_{\mathrm{sym}} + 2 \hz \otimes \Pperp{z}.
\end{equation*}
\end{proof}
\begin{remark}
    We emphasize that $\hznabla^2$, as a matrix, is not (a multiple of) the extrinsic spherical Hessian as shown by \eqref{eq:spherical hessian formula} and \eqref{eq:nabla omega of nabla omega identity}. They differ by the spherical gradient tensorized with the associated unit vector. Nevertheless, in view of \eqref{eq:nabla nabla omega traced pi is spherical Laplacian},
    \begin{equation}
        \Delta_{\S^2} \log \tests = \Pperp{z} : \hznabla^2 \log \testf.
    \end{equation}
    Moreover, a direct computation yields
    \begin{equation}\label{eq:Gamma 2 HS}
        \norm{\hznabla^2 \log \testf}^2_{\mathrm{HS}} = |z|^4 \norm{\left(\Pperp{z}\gradz\right)^2 \log\testf}^2_{\mathrm{HS}} = \Gamma_2\left(\log \tests, \log \tests\right),
    \end{equation}
    where the iterated \emph{carr\'e du champs} $\Gamma_2$ of the spherical Laplacian on $\S^2$ is given by
    \begin{equation}\label{eq:Gamma 2}
        \Gamma_2\left(\log \tests, \log \tests\right) = \norm{\snabla^2 \log \tests}^2_{\mathrm{HS}} + \abs{\snabla \log \tests}^2.
    \end{equation}
\end{remark}
\end{lemma}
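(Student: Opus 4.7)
My plan is to prove this identity by a direct computation, splitting $\nabla_z$ into its radial and spherical components and then invoking the already-established identity~\eqref{eq:nabla omega of nabla omega identity} for $\hznabla^2 \log \testf$. The whole calculation is essentially bookkeeping; the only conceptual content is the reduction in Step 4 below.

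\textbf{Step 1: Simplify the argument.} Using $\Pperp{z} \nabla_z = \frac{1}{|z|}\hznabla$ on scalars, I rewrite
\begin{equation*}
    |z|^{2+\gamma} \Pperp{z} \nabla_z \log\testf = |z|^{1+\gamma} \hznabla \log\testf.
\end{equation*}
This is the object to which I apply $\nabla_z$.

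\textbf{Step 2: Split $\nabla_z$.} I use the decomposition $\nabla_z = \hz \rpartial + \frac{1}{|z|} \hznabla$, where the first part produces an $\hz \otimes (\,\cdot\,)$ contribution and the second part acts componentwise. I apply these two operators to the vector field $|z|^{1+\gamma}\hznabla \log\testf$ separately.

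\textbf{Step 3: Radial part.} Since $\rpartial |z|^{1+\gamma} = (1+\gamma)|z|^\gamma$, and since $\rpartial$ and $\hznabla$ commute on scalars, the radial part yields
\begin{equation*}
    \hz \otimes \rpartial \bigl(|z|^{1+\gamma} \hznabla \log\testf\bigr) = (1+\gamma)\,|z|^\gamma\, \hz \otimes \hznabla \log\testf + |z|^{1+\gamma}\, \hz \otimes \hznabla \rpartial \log\testf.
\end{equation*}

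\textbf{Step 4: Spherical part.} Because $\hznabla |z| = 0$, the weight $|z|^{1+\gamma}$ passes through $\hznabla$, and the spherical part reduces to
\begin{equation*}
    \frac{1}{|z|}\hznabla \bigl(|z|^{1+\gamma} \hznabla \log\testf\bigr) = |z|^{\gamma}\, \hznabla^2 \log\testf.
\end{equation*}
Now I substitute identity~\eqref{eq:nabla omega of nabla omega identity}, namely
\begin{equation*}
    \hznabla^2 \log\testf = |z|^2 \Bigl(\Pperp{z} \nabla_z^2 \log\testf\, \Pperp{z} - \tfrac{1}{|z|}\rpartial \log\testf\, \Pperp{z}\Bigr) - \hznabla \log\testf \otimes \hz,
\end{equation*}
which produces the fourth term of the stated identity plus a correction $-|z|^\gamma\, \hznabla \log\testf \otimes \hz$.

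\textbf{Step 5: Symmetrize.} Collecting Steps 3 and 4, the non-symmetric piece is
\begin{equation*}
    (1+\gamma)\,|z|^\gamma\, \hz \otimes \hznabla \log\testf - |z|^\gamma\, \hznabla \log\testf \otimes \hz.
\end{equation*}
Recalling $2\,\hz \otimes_{\mathrm{sym}} \hznabla \log\testf = \hz \otimes \hznabla \log\testf + \hznabla \log\testf \otimes \hz$, I rewrite this as
\begin{equation*}
    -2\,|z|^\gamma\, \hz \otimes_{\mathrm{sym}} \hznabla \log\testf + (2+\gamma)\,|z|^\gamma\, \hz \otimes \hznabla \log\testf,
\end{equation*}
matching the first two terms of the stated identity. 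Combining with the radial contribution $|z|^{1+\gamma}\hz \otimes \hznabla \rpartial \log\testf$ from Step 3 and the spherical-Hessian contribution from Step 4 completes the proof.

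The only mild obstacle is verifying the auxiliary identity $\hznabla \Pperp{z} = -3(\Pperp{z} \otimes \hz)_{\mathrm{sym}} + 2\,\hz \otimes \Pperp{z}$ used inside~\eqref{eq:nabla omega of nabla omega identity}, but this is already treated in the paper immediately after~\eqref{eq:nabla omega of nabla omega identity} and follows by differentiating $\Pperp{z} = \mathrm{Id} - \hz \otimes \hz$ along $\hznabla = |z|\Pperp{z}\nabla_z$.
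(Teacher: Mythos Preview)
Your proof is correct and follows exactly the paper's route: rewrite the argument as $|z|^{1+\gamma}\hznabla\log\testf$, decompose $\nabla_z$ into its radial and spherical parts, and substitute the identity~\eqref{eq:nabla omega of nabla omega identity} for $\hznabla^2\log\testf$; your Step~5 merely spells out the symmetrization that the paper leaves implicit. One remark: your Step~5 correctly produces $-2|z|^{\gamma}\,\hz\otimes_{\mathrm{sym}}\hznabla\log\testf$, which reveals a typo in the displayed lemma (the first term there carries the power $|z|^{2+\gamma}$ but should carry $|z|^{\gamma}$).
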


\printbibliography

@article {CHF24,
	AUTHOR = {Carrillo, Jos\'{e} A. and Hu, Jingwei and Van Fleet, Samuel Q.},
	TITLE = {A particle method for the multispecies {L}andau equation},
	JOURNAL = {Acta Appl. Math.},
	FJOURNAL = {Acta Applicandae Mathematicae},
	VOLUME = {194},
	YEAR = {2024},
	PAGES = {Paper No. 6, 28},
	ISSN = {0167-8019,1572-9036},
	MRCLASS = {35Q20 (35F20 35R09)},
	MRNUMBER = {4812437},
	DOI = {10.1007/s10440-024-00692-9},
	URL = {https://doi.org/10.1007/s10440-024-00692-9},
}

@article {GS25,
	AUTHOR = {Guillen, Nestor and Silvestre, Luis},
	TITLE = {The {L}andau equation does not blow up},
	JOURNAL = {Acta Math.},
	FJOURNAL = {Acta Mathematica},
	VOLUME = {234},
	YEAR = {2025},
	NUMBER = {2},
	PAGES = {315--375},
	ISSN = {0001-5962,1871-2509},
	MRCLASS = {35Q70 (35B44)},
	MRNUMBER = {4930545},
	DOI = {10.4310/acta.2025.v234.n2.a2},
	URL = {https://doi.org/10.4310/acta.2025.v234.n2.a2},
}

@article{ISV25,
	title = {On the monotonicity of the {Fisher} information for the {Boltzmann} equation},
	issn = {1432-1297},
	url = {https://doi.org/10.1007/s00222-025-01376-3},
	doi = {10.1007/s00222-025-01376-3},
	JOURNAL = {Acta Math.},
	FJOURNAL = {Acta Mathematica},
	author = {Imbert, Cyril and Silvestre, Luis and Villani, Cédric},
	year = {2025}
}

@article {To00,
    AUTHOR = {Toscani, Giuseppe and Villani, Cédric},
     TITLE = {On the trend to equilibrium for some dissipative systems with slowly increasing a priori bounds},
   JOURNAL = {J. Statist. Phys.},
  FJOURNAL = {Journal of Statistical Physics},
    VOLUME = {98},
      YEAR = {2000},
    NUMBER = {5-6},
     PAGES = {1279--1309},
      ISSN = {0022-4715,1572-9613},
   MRCLASS = {82C05 (82C31)},
       DOI = {10.1023/A:1018623930325},
       URL = {https://doi.org/10.1023/A:1018623930325},
}

@article {DV00,
    AUTHOR = {Desvillettes, Laurent and Villani, C\'edric},
     TITLE = {On the spatially homogeneous {L}andau equation for hard potentials. {I}. {E}xistence, uniqueness and smoothness},
   JOURNAL = {Comm. Partial Differential Equations},
  FJOURNAL = {Communications in Partial Differential Equations},
    VOLUME = {25},
      YEAR = {2000},
    NUMBER = {1-2},
     PAGES = {179--259},
      ISSN = {0360-5302,1532-4133},
   MRCLASS = {82C40 (35Q99 45K05 76P05)},
       DOI = {10.1080/03605300008821512},
       URL = {https://doi.org/10.1080/03605300008821512},
}

@article {DV00-2,
    AUTHOR = {Desvillettes, Laurent and Villani, C\'edric},
     TITLE = {On the spatially homogeneous {L}andau equation for hard potentials. {II}. {$H$}-theorem and applications},
   JOURNAL = {Comm. Partial Differential Equations},
  FJOURNAL = {Communications in Partial Differential Equations},
    VOLUME = {25},
      YEAR = {2000},
    NUMBER = {1-2},
     PAGES = {261--298},
      ISSN = {0360-5302,1532-4133},
   MRCLASS = {82C40 (35Q99 45K05 76P05)},
       DOI = {10.1080/03605300008821513},
       URL = {https://doi.org/10.1080/03605300008821513},
}

@article {Si17,
    AUTHOR = {Silvestre, Luis},
     TITLE = {Upper bounds for parabolic equations and the {L}andau equation},
   JOURNAL = {J. Differential Equations},
  FJOURNAL = {Journal of Differential Equations},
    VOLUME = {262},
      YEAR = {2017},
    NUMBER = {3},
     PAGES = {3034--3055},
      ISSN = {0022-0396,1090-2732},
   MRCLASS = {35K58 (35B45 35R45)},
       DOI = {10.1016/j.jde.2016.11.010},
       URL = {https://doi.org/10.1016/j.jde.2016.11.010},
}

@article{Ca91,
title = {Superadditivity of Fisher's information and logarithmic Sobolev inequalities},
fjournal = {Journal of Functional Analysis},
journal = {J. Funct. Anal.},
volume = {101},
number = {1},
pages = {194-211},
year = {1991},
issn = {0022-1236},
doi = {https://doi.org/10.1016/0022-1236(91)90155-X},
url = {https://www.sciencedirect.com/science/article/pii/002212369190155X},
author = {Eric A. Carlen},
}

@book{L58,
	title = {The {Kinetic} {Equation} in the {Case} of {Coulomb} {Interaction}},
	url = {https://books.google.nl/books?id=HdPUNwAACAAJ},
	publisher = {Defense Technical Information Center},
	author = {Landau, Lev Davidovich},
	year = {1958},
}

@article{CM17,
	title = {Estimates for the {Large} {Time} {Behavior} of the {Landau} {Equation} in the {Coulomb} {Case}},
	volume = {224},
	issn = {1432-0673},
	url = {https://doi.org/10.1007/s00205-017-1078-3},
	doi = {10.1007/s00205-017-1078-3},
	number = {2},
    journal = {Arch. Ration. Mech. Anal.},
	fjournal = {Archive for Rational Mechanics and Analysis},
	author = {Carrapatoso, Kleber and Desvillettes, Laurent and He, Lingbing},
	year = {2017},
	pages = {381--420},
}

@article{Ji24,
  title={Bounds for the optimal constant of the Bakry-\'Emery $\Gamma_2$ criterion inequality on $\mathbb{R}P^{d-1}$},
  author={Ji, Sehyun},
  journal={arXiv preprint arXiv:2408.13954},
  year={2024}
}

@article {De92,
    AUTHOR = {Desvillettes, Laurent},
     TITLE = {On asymptotics of the {B}oltzmann equation when the collisions become grazing},
   JOURNAL = {Transport Theory Statist. Phys.},
  FJOURNAL = {Transport Theory and Statistical Physics},
    VOLUME = {21},
      YEAR = {1992},
    NUMBER = {3},
     PAGES = {259--276},
      ISSN = {0041-1450,1532-2424},
   MRCLASS = {82C40 (76P05)},
       DOI = {10.1080/00411459208203923},
       URL = {https://doi.org/10.1080/00411459208203923},
}

@article {Vi98,
    AUTHOR = {Villani, C\'edric},
     TITLE = {On a new class of weak solutions to the spatially homogeneous {B}oltzmann and {L}andau equations},
   JOURNAL = {Arch. Rational Mech. Anal.},
  FJOURNAL = {Archive for Rational Mechanics and Analysis},
    VOLUME = {143},
      YEAR = {1998},
    NUMBER = {3},
     PAGES = {273--307},
      ISSN = {0003-9527},
   MRCLASS = {82C40 (35D05 35Q99 45K05 76P05)},
       DOI = {10.1007/s002050050106},
       URL = {https://doi.org/10.1007/s002050050106},
}

@article {CFMP09,
    AUTHOR = {Cianchi, Andrea and Fusco, Nicola and Maggi, Francesco and Pratelli, Aldo},
     TITLE = {The sharp {S}obolev inequality in quantitative form},
   JOURNAL = {J. Eur. Math. Soc. (JEMS)},
  FJOURNAL = {Journal of the European Mathematical Society (JEMS)},
    VOLUME = {11},
      YEAR = {2009},
    NUMBER = {5},
     PAGES = {1105--1139},
      ISSN = {1435-9855,1435-9863},
   MRCLASS = {46E35 (26D10 26D15)},
       DOI = {10.4171/JEMS/176},
       URL = {https://doi.org/10.4171/JEMS/176},
}

@article {GW25,
    AUTHOR = {Gualdani, Maria Pia and Winter, Raphael},
     TITLE = {A blow-down mechanism for the {L}andau-{C}oulomb equation},
   JOURNAL = {J. Funct. Anal.},
  FJOURNAL = {Journal of Functional Analysis},
    VOLUME = {288},
      YEAR = {2025},
    NUMBER = {7},
     PAGES = {Paper No. 110816, 36},
      ISSN = {0022-1236,1096-0783},
       DOI = {10.1016/j.jfa.2024.110816},
       URL = {https://doi.org/10.1016/j.jfa.2024.110816},
}

@article {BW23,
    AUTHOR = {Le Bihan, Corentin and Winter, Raphael},
     TITLE = {The grazing collisions limit from the linearized {B}oltzmann equation to the {L}andau equation for short-range potentials},
   JOURNAL = {Kinet. Relat. Models},
  FJOURNAL = {Kinetic and Related Models},
    VOLUME = {16},
      YEAR = {2023},
    NUMBER = {5},
     PAGES = {654--675},
      ISSN = {1937-5093,1937-5077},
       DOI = {10.3934/krm.2023003},
       URL = {https://doi.org/10.3934/krm.2023003},
}

@article {DW23,
    AUTHOR = {Duerinckx, Mitia and Winter, Raphael},
     TITLE = {Well-posedness of the {L}enard-{B}alescu equation with smooth interactions},
   JOURNAL = {Arch. Ration. Mech. Anal.},
  FJOURNAL = {Archive for Rational Mechanics and Analysis},
    VOLUME = {247},
      YEAR = {2023},
    NUMBER = {4},
     PAGES = {Paper No. 71, 52},
      ISSN = {0003-9527,1432-0673},
       DOI = {10.1007/s00205-023-01901-9},
       URL = {https://doi.org/10.1007/s00205-023-01901-9},
}

@incollection {AMT13,
    AUTHOR = {Absil, P.-A. and Mahony, Robert and Trumpf, Jochen},
     TITLE = {An extrinsic look at the {R}iemannian {H}essian},
 BOOKTITLE = {Geometric science of information},
    SERIES = {Lecture Notes in Comput. Sci.},
    VOLUME = {8085},
     PAGES = {361--368},
 PUBLISHER = {Springer, Heidelberg},
      YEAR = {2013},
   MRCLASS = {53B21 (65K10)},
       DOI = {10.1007/978-3-642-40020-9\_39},
       URL = {https://doi.org/10.1007/978-3-642-40020-9_39},
}

@article {GZ17,
    AUTHOR = {Gualdani, Maria Pia and Zamponi, Nicola},
     TITLE = {Spectral gap and exponential convergence to equilibrium for a multi-species {L}andau system},
   JOURNAL = {Bull. Sci. Math.},
  FJOURNAL = {Bulletin des Sciences Math\'ematiques},
    VOLUME = {141},
      YEAR = {2017},
    NUMBER = {6},
     PAGES = {509--538},
      ISSN = {0007-4497,1952-4773},
       DOI = {10.1016/j.bulsci.2017.07.002},
       URL = {https://doi.org/10.1016/j.bulsci.2017.07.002},
}

@article {DJMZ16,
    AUTHOR = {Daus, Esther S. and J\"ungel, Ansgar and Mouhot, Cl\'ement and Zamponi, Nicola},
     TITLE = {Hypocoercivity for a linearized multispecies {B}oltzmann system},
   JOURNAL = {SIAM J. Math. Anal.},
  FJOURNAL = {SIAM Journal on Mathematical Analysis},
    VOLUME = {48},
      YEAR = {2016},
    NUMBER = {1},
     PAGES = {538--568},
      ISSN = {0036-1410,1095-7154},
   MRCLASS = {35Q20 (35B40 76P05)},
       DOI = {10.1137/15M1017934},
       URL = {https://doi.org/10.1137/15M1017934},
}

@article {MN06,
    AUTHOR = {Mouhot, Cl\'ement and Neumann, Lukas},
     TITLE = {Quantitative perturbative study of convergence to equilibrium for collisional kinetic models in the torus},
   JOURNAL = {Nonlinearity},
  FJOURNAL = {Nonlinearity},
    VOLUME = {19},
      YEAR = {2006},
    NUMBER = {4},
     PAGES = {969--998},
      ISSN = {0951-7715,1361-6544},
   MRCLASS = {82B40 (76P05 82C40 82D05)},
       DOI = {10.1088/0951-7715/19/4/011},
       URL = {https://doi.org/10.1088/0951-7715/19/4/011},
}

@article {BD16,
    AUTHOR = {Briant, Marc and Daus, Esther S.},
     TITLE = {The {B}oltzmann equation for a multi-species mixture close to global equilibrium},
   JOURNAL = {Arch. Ration. Mech. Anal.},
  FJOURNAL = {Archive for Rational Mechanics and Analysis},
    VOLUME = {222},
      YEAR = {2016},
    NUMBER = {3},
     PAGES = {1367--1443},
      ISSN = {0003-9527,1432-0673},
   MRCLASS = {35Q20 (82C40)},
       DOI = {10.1007/s00205-016-1023-x},
       URL = {https://doi.org/10.1007/s00205-016-1023-x},
}

@article{GGPTZ25,
  title={The fuzzy Landau equation: global well-posedness and Fisher information},
  author={Gualdani, Maria Pia and Guillen, Nestor and Pavlovi{\'c}, Nata{\v{s}}a and Taskovi{\'c}, Maja and Zamponi, Nicola},
  journal={arXiv preprint arXiv:2507.08689},
  year={2025}
}

@article {Gu10,
    AUTHOR = {Guo, Yan},
     TITLE = {Decay and continuity of the {B}oltzmann equation in bounded domains},
   JOURNAL = {Arch. Ration. Mech. Anal.},
  FJOURNAL = {Archive for Rational Mechanics and Analysis},
    VOLUME = {197},
      YEAR = {2010},
    NUMBER = {3},
     PAGES = {713--809},
      ISSN = {0003-9527,1432-0673},
   MRCLASS = {35Q20 (35B40 35B65 76N15 76P05 82C40)},
       DOI = {10.1007/s00205-009-0285-y},
       URL = {https://doi.org/10.1007/s00205-009-0285-y},
}

@article {LWW18,
    AUTHOR = {Lin, Yu-Chu and Wang, Haitao and Wu, Kung-Chien},
     TITLE = {Smoothing effects and decay estimate of the solution of the linearized two species {L}andau equation},
   JOURNAL = {Commun. Math. Sci.},
  FJOURNAL = {Communications in Mathematical Sciences},
    VOLUME = {16},
      YEAR = {2018},
    NUMBER = {8},
     PAGES = {2261--2300},
      ISSN = {1539-6746,1945-0796},
   MRCLASS = {35Q20 (35B40)},
  MRNUMBER = {3951086},
       DOI = {10.4310/CMS.2018.v16.n8.a9},
       URL = {https://doi.org/10.4310/CMS.2018.v16.n8.a9},
}

@article{SY10,
	title = {On the {Solution} of a {Boltzmann} {System} for {Gas} {Mixtures}},
	volume = {195},
	issn = {1432-0673},
	url = {https://doi.org/10.1007/s00205-009-0219-8},
	doi = {10.1007/s00205-009-0219-8},
	number = {2},
	JOURNAL = {Arch. Ration. Mech. Anal.},
    FJOURNAL = {Archive for Rational Mechanics and Analysis},
	author = {Sotirov, Alexander and Yu, Shih-Hsien},
	year = {2010},
	pages = {675--700},
}

@incollection{LP83,
  title={Physical kinetics},
  author={Lifschitz, E.M. and Pitajewski, Lev P.},
  booktitle={Textbook of theoretical physics. 10},
  pages={168--209},
  year={1983}
}

@article{DH25,
  title={On a fuzzy Landau Equation: Part I. A variational approach},
  author={Duong, Manh Hong and He, Zihui},
  journal={arXiv preprint arXiv:2504.07666},
  year={2025}
}

@article{DH25-2,
  title={On a fuzzy Landau Equation: Part II. Solvability results},
  author={Duong, Manh Hong and He, Zihui},
  journal={arXiv preprint arXiv:2507.10288},
  year={2025}
}

@article{DH25-3,
  title={On a fuzzy Landau Equation: Part III. The grazing collision limit},
  author={Duong, Manh Hong and Golubkov, Boris and He, Zihui},
  journal={arXiv preprint arXiv:2512.04713},
  year={2025}
}

@article {Vi98-2,
    AUTHOR = {Villani, Cédric},
     TITLE = {On the spatially homogeneous {L}andau equation for
              {M}axwellian molecules},
   JOURNAL = {Math. Models Methods Appl. Sci.},
  FJOURNAL = {Mathematical Models and Methods in Applied Sciences},
    VOLUME = {8},
      YEAR = {1998},
    NUMBER = {6},
     PAGES = {957--983},
      ISSN = {0218-2025,1793-6314},
   MRCLASS = {82C40 (47N20 76P05)},
       DOI = {10.1142/S0218202598000433},
       URL = {https://doi.org/10.1142/S0218202598000433},
}

@article{GG18,
	title = {On ${A}_p$ weights and the {Landau} equation},
	volume = {58},
	issn = {1432-0835},
	url = {https://doi.org/10.1007/s00526-018-1451-6},
	doi = {10.1007/s00526-018-1451-6},
	number = {1},
    journal ={Calc. Var. Partial Differential Equations},
	fjournal = {Calculus of Variations and Partial Differential Equations},
	author = {Gualdani, Maria and Guillen, Nestor},
	year = {2018},
	pages = {17},
}

@article {Wu14,
    AUTHOR = {Wu, Kung-Chien},
     TITLE = {Global in time estimates for the spatially homogeneous {L}andau equation with soft potentials},
   JOURNAL = {J. Funct. Anal.},
  FJOURNAL = {Journal of Functional Analysis},
    VOLUME = {266},
      YEAR = {2014},
    NUMBER = {5},
     PAGES = {3134--3155},
      ISSN = {0022-1236,1096-0783},
   MRCLASS = {82C40 (35B40 35D30 35Q20 35Q84)},
       DOI = {10.1016/j.jfa.2013.11.005},
       URL = {https://doi.org/10.1016/j.jfa.2013.11.005},
}

@article {GGL25,
    AUTHOR = {Golding, William and Gualdani, Maria and Loher, Am\'elie},
     TITLE = {Global smooth solutions to the {L}andau-{C}oulomb equation in {$L^{3/2}$}},
   JOURNAL = {Arch. Ration. Mech. Anal.},
  FJOURNAL = {Archive for Rational Mechanics and Analysis},
    VOLUME = {249},
      YEAR = {2025},
    NUMBER = {3},
     PAGES = {Paper No. 34, 54},
      ISSN = {0003-9527,1432-0673},
   MRCLASS = {35Q56 (35B65)},
  MRNUMBER = {4904566},
       DOI = {10.1007/s00205-025-02107-x},
       URL = {https://doi.org/10.1007/s00205-025-02107-x},
}

@article {AV04,
    AUTHOR = {Alexandre, Raphaël V. and Villani, Cédric},
     TITLE = {On the {L}andau approximation in plasma physics},
   JOURNAL = {Ann. Inst. H. Poincar\'e{} C Anal. Non Lin\'eaire},
  FJOURNAL = {Annales de l'Institut Henri Poincar\'e{} C. Analyse Non Lin\'eaire},
    VOLUME = {21},
      YEAR = {2004},
    NUMBER = {1},
     PAGES = {61--95},
      ISSN = {0294-1449,1873-1430},
   MRCLASS = {82D10 (35A35 35F20 45K05 76P05 76X05)},
       DOI = {10.1016/S0294-1449(03)00030-1},
       URL = {https://doi.org/10.1016/S0294-1449(03)00030-1},
}

@article {CGG25,
    AUTHOR = {Cabrera, Rene and Gualdani, Maria Pia and Guillen, Nestor},
     TITLE = {Regularization estimates of the {L}andau-{C}oulomb diffusion},
   JOURNAL = {Nonlinear Anal.},
  FJOURNAL = {Nonlinear Analysis. Theory, Methods \& Applications. An International Multidisciplinary Journal},
    VOLUME = {251},
      YEAR = {2025},
     PAGES = {Paper No. 113695, 8},
      ISSN = {0362-546X,1873-5215},
   MRCLASS = {35K57 (35R09)},
       DOI = {10.1016/j.na.2024.113695},
       URL = {https://doi.org/10.1016/j.na.2024.113695},
}

@article{Vi25,
  title={Fisher information in kinetic theory},
  author={Villani, C{\'e}dric},
  journal={arXiv preprint arXiv:2501.00925},
  year={2025}
}

@article{DGGL24,
  title={Production of the Fisher information for the Landau-Coulomb equation with L1 initial data},
  author={Desvillettes, Laurent and Golding, William and Gualdani, Maria Pia and Loher, Amelie},
  journal={arXiv preprint arXiv:2410.10765},
  year={2024}
}

@book{Sch91,
    AUTHOR = {Schram, Pieter P. J. M.},
     TITLE = {Kinetic theory of gases and plasmas},
    SERIES = {Fundamental Theories of Physics},
    VOLUME = {46},
 PUBLISHER = {Kluwer Academic Publishers Group, Dordrecht},
      YEAR = {1991},
     PAGES = {xiv+426},
      ISBN = {0-7923-1392-5},
   MRCLASS = {82C40 (76P05 82D10)},
       DOI = {10.1007/978-94-011-3612-9},
       URL = {https://doi.org/10.1007/978-94-011-3612-9}
}

@article{Ji24-2,
  title={Dissipation estimates of the Fisher information for the Landau equation},
  author={Ji, Sehyun},
  journal={arXiv preprint arXiv:2410.09035},
  year={2024}
}

@article{HHJW25,
  title={Runaway Avalanches in Plasmas with External Electric Fields: Inhomogeneous Case in a Perturbation Framework},
  author={He, Ling-Bing and H\"{o}fer, Richard and Ji, Jie and Winter, Raphael},
  journal={In preparation},
  year={2025}
}

@article {St07,
    AUTHOR = {Strain, Robert M.},
     TITLE = {On the linearized {B}alescu-{L}enard equation},
   JOURNAL = {Comm. Partial Differential Equations},
  FJOURNAL = {Communications in Partial Differential Equations},
    VOLUME = {32},
      YEAR = {2007},
    NUMBER = {10-12},
     PAGES = {1551--1586},
      ISSN = {0360-5302,1532-4133},
   MRCLASS = {82D10 (76P05 82C40)},
       DOI = {10.1080/03605300601088609},
       URL = {https://doi.org/10.1080/03605300601088609},
}

@article{BD25,
  title={Lenard-Balescu thermalization: rigorous derivation from a toy model},
  author={Duerinckx, Mitia and Bihan, Corentin Le},
  journal={arXiv preprint arXiv:2511.10778},
  year={2025}
}
\end{document}